\numberwithin{equation}{section}
\renewcommand{\leq}{\leqslant}
\renewcommand{\geq}{\geqslant}
\renewcommand{\tilde}{\widetilde}
\renewcommand{\epsilon}{\varepsilon}
\newcommand{\omoins}{\circleddash}
\newcommand{\der}{\mathrm{d}}
\newcommand{\E}{\mathbb{E}}
\newcommand{\K}{K}
\newcommand{\N}{\mathbb{N}}
\newcommand{\R}{\mathbb{R}}
\newcommand{\T}{\mathbb{T}}
\newcommand{\Z}{\mathbb{Z}}
\newcommand{\cara}[1]{\mathbbm{1}_{#1}}
\newcommand{\cvps}{\overset{a.s.}{\longrightarrow}}
\newcommand{\cvn}{\underset{n\rightarrow\infty}{\longrightarrow}}
\newcommand{\defeq}{:=}
\newcommand{\X}{\mathcal{X}}
\newcommand{\card}{\mbox{card}}
\newcommand{\tr}{\mbox{Tr}}
\newcommand{\rad}{\textrm{rad}}
\newcommand{\logdet}{\textnormal{logdet}}
\DeclareMathOperator*{\argmin}{arg\,min}
\DeclareMathOperator*{\argmax}{arg\,max}
\theoremstyle{plain}
\newtheorem{prop}{Proposition}[section]
\newtheorem{lem}[prop]{Lemma}
\newtheorem{defin}[prop]{Definition}
\newtheorem{defin/prop}[prop]{Definition/Proposition}
\newtheorem{theo}[prop]{Theorem}
\newtheorem{corr}[prop]{Corollary}
\title{Asymptotic approximation of the likelihood of stationary determinantal point processes}
\author[,1]{Arnaud Poinas\thanks{arnaud.poinas@math.univ-poitiers.fr}}
\author[,2]{Fr\'ed\'eric Lavancier\thanks{frederic.lavancier@univ-nantes.fr}}
\affil[1]{
Laboratoire de Mathématiques et Applications -- UMR 7348 -- Poitiers, France}
\affil[2]{Laboratoire de Math\'ematiques Jean Leray, Nantes, France. }
\date{\today}
\begin{document}
\maketitle
\begin{abstract}
Continuous determinantal point processes (DPPs)  are a class of repulsive point processes on $\R^d$ with many  statistical applications. Although an explicit expression of their density is known, it is too complicated to be used directly for maximum likelihood estimation. In the stationary case, an approximation using Fourier series has been suggested, but it is limited to rectangular observation windows and no theoretical results support it. In this contribution, we investigate a different way to approximate the likelihood by looking at its asymptotic behaviour when the observation window grows towards $\R^d$. This new approximation is not limited to rectangular windows, is faster to compute than the previous one,  does not require any tuning parameter,  and some  theoretical justifications are provided. It moreover provides an explicit formula for estimating the asymptotic variance of the associated estimator.
The performances are assessed in a simulation study on standard parametric models on $\R^d$ and compare favourably to common  alternative estimation methods for continuous DPPs.
\end{abstract}

\section{Introduction}

Determinantal point processes (DPPs for short) are a type of repulsive point processes with statistical applications ranging from machine learning \cite{Taskar} to telecommunications \cite{Telecom1,Telecom2,gomez_case_2015}, biology \cite{affandi2014learning}, forestry \cite{Lavancier}, signal processing \cite{STFT} and computational statistics \cite{BaHa16Sub}. In this paper, we focus on likelihood estimation of parametric families of stationary DPPs on $\R^d$, but we will also include in our study stationary DPPs defined on $\Z^d$.
From a theoretical point of view, we are specifically interested in an increasing domain setting, meaning that we assume to observe only one realization of the DPP on a bounded window $W\subset \R^d$, and our asymptotic results will concern the case where $W$ grows towards $\R^d$, making the cardinality of the observed DPP tend to infinity. From this perspective, the likelihood is just the density of the DPP. 

For a DPP on $\R^d$ with kernel $K$, the expression of its density on any compact set $W$ (with respect to the unit rate Poisson point process) is known since the seminal paper of Macchi (1975) \cite{Macchi}. But this expression is hardly tractable. It requires the knowledge of another kernel, usually called  $L$, that can only be obtained from $K$ by solving an integral equation or by knowing the spectral representation of the integral operator associated to $K$ on $W$. Some approximations are then needed. In the stationary case and when $W$ is a rectangular window, an approximation of the density has been proposed in \cite{Lavancier} by considering a Fourier series approximation of $K$. This approximation has the pleasant feature to be explicit, but is restricted to rectangular windows and lacks theoretical justifications. 
  
 Our contribution is an (increasing domain) asymptotic approximation  of the density as well as a way to correct the edge effects arising as a consequence of this approximation. This approach is not restricted to rectangular windows $W$, does not depend on any tuning parameter, and  is faster to compute than the Fourier series approximation of \cite{Lavancier}. 
Moreover, unlike the previous one, our approximation is generally smooth in the parameter of the model and we can compute explicitly its derivatives with respect to the parameter. This allows us to approximate the Fisher information matrix, and thus to estimate the asymptotic variance of the maximum likelihood estimator by an explicit formula.
 
The density of a DPP depends on the log-determinant of a random kernel matrix whose behaviour is difficult to control from a theoretical point of view, making challenging a theoretical study of our approximation. The situation simplifies slightly for stationary DPPs defined on a regular grid, typically $\Z^d$. We prove in this case that our approximation has the same asymptotic behaviour as the true density, under mild assumptions. The proof relies on an asymptotic control of the $L$ kernel when $W$ grows to $\Z^d$ and to concentration inequalities for DPPs established in \cite{Pemantle}. For DPPs defined on $\R^d$, getting the same kind of results remains an open problem. However we prove that any DPP on $\R^d$ is arbitrarily close to a DPP defined on a small enough regular grid, the density approximation of which is consistent from the previous result. 

Likelihood estimation of DPPs has been considered in other settings. For DPPs defined on a finite space, getting the expression of the density from $K$ is not an issue (providing the space dimension is not too large),  as it only requires the eigendecomposition of the kernel $K$, which reduces to a matrix in this case. Likelihood estimation in this setting, based on the observation of $n$ i.i.d. discrete DPPs, has been studied  in \cite{Brunel}, who investigate asymptotic properties when $n$ tends to infinity. In the continuous case, likelihood estimation based on $n$  i.i.d. observations is considered in \cite{Bardenet}. In this contribution, the DPP is directly defined through the kernel $L$, not $K$, avoiding the need to approximate its density from $K$. However, this comes at the cost of a loss of interpretability of the parameters, and more importantly, this approach does not allow to consider increasing domain asymptotic. Indeed, as detailed in Section~\ref{sec:Def}, it is extremely difficult to relate the kernel $L_{[W]}$ associated to the DPP defined on $W$, with the kernel $L_{[W']}$ for $W\subset W'$. For this reason, it is difficult to suggest a parametric family of kernels $L_{[W]}$ indexed by $W$.  In contrast the kernel $K$ of the DPP on any set $W$ is just the restriction of $K$ to $W$, and it suffices to define $K$ on $\R^d$ in order to automatically get a consistent family of kernels on any subset $W$. 

The remainder of the paper is organised as follows. We introduce our notations and basic definitions in Section~\ref{sec:Def}. Our asymptotic approximation of the likelihood  is presented in Section \ref{SEC:DEFLL}, along with some theoretical justifications. We show in Section~\ref{sec: appli} how this approximation applies to standard parametric families of DPPs in $\R^d$. Section~\ref{sec:Simu} is devoted to a simulation study demonstrating the performances of our approach. Some concluding remarks are given in Section~\ref{sec:Conclusion}. Finally Section~\ref{sec:ProofCons} includes the proof of our theoretical results, while some technical lemmas are gathered in the appendix.

\section{Definitions and notation} \label{sec:Def}
We consider point processes on $(\X,\mathcal{B}(\X),\nu)$ where 
$\X$ is either $\R^d$ or $\Z^d$ and the corresponding measure $\nu$ is either the Lebesgue measure on $\R^d$ or the counting measure on $\Z^d$, respectively. For any point process $X$ and $\nu$-measurable set $W\subset \X$ we write $N(W)$ for the number of points of $X\cap W$ and $|W|$ for the volume of $W$, i.e. $|W|=\nu(W)$ is either the Lebesgue measure of $W$ if $\X=\R^d$ or its cardinality if $\X=\Z^d$. Moreover, for any finite set $X\subset\X$ and any function $F:\X^2\rightarrow \R$, we write $F[X]$ for the matrix $(F(x,y))_{x,y\in X}$ where all $x\in X$ are arbitrarily ordered.  We write $F(x,y)\defeq F_0(y-x)$ if $F$ is invariant by translation, in which case  $F_0[X]$ will refer to the matrix $F[X]$,  and we write $F(x,y)\defeq F_{\rad}(\|y-x\|)$ if $F$ is a radial function. Here $\|.\|$ denotes the euclidean norm on $\X$ but we will also use the notation $\|.\|$ for the operator norm when applied to a linear operator, without ambiguity.  
We denote by $\hat f$ the Fourier transform of any function $f:\X\mapsto\R$, defined for any $x\in \X^*$ by
$$\hat f(x)\defeq\int_{\X}f(t)\exp(-2i\pi \langle t, x\rangle)\der\nu(t),$$
where $\X^*=\R^d$ if $\X=\R^d$, $\X^*=[0,1]^d$ if $\X=\Z^d$ and $\langle .,. \rangle$ denotes the usual scalar product on $\X$.
Finally, for any hermitian matrix $M$ we write $\lambda_{\max}(M)$ and $\lambda_{\min}(M)$ for the highest and the lowest eigenvalue of $M$, respectively, and, for any two hermitian matrices (or operators on a Hilbert space) $M$ and $M'$, we use the Loewner order notation $M\leq M'$ when $M'-M$ is positive definite.

\medskip

DPPs are commonly defined through their joint intensity functions.
\begin{defin} \label{Def1}
Let $X$ be a point process on $(\X,\nu)$ and $n\geq 1$ be an integer. If there exists a non-negative function $\rho_n:\X^n\rightarrow\R$ such that
$$\E\left [ \sum^{\neq}_{x_1,\cdots,x_n\in X} f(x_1,\cdots,x_n) \right ]=\int_{\X^n}f(x_1,\cdots,x_n)\rho_n(x_1,\cdots,x_n)\der\nu(x_1)\cdots\der\nu(x_n)$$
for all locally integrable functions $f:\X^n\rightarrow\R$, where the symbol $\neq$ means that the sum is done for distinct $x_i$, then $\rho_n$ is called the $n$-th order joint intensity function of $X$.
\end{defin}

DPPs are then defined the following way.
\begin{defin} \label{Def2}
Let $K:\X^2\rightarrow\mathbb{R}$  be a locally square integrable, hermitian function such that its associated integral operator on $L^2(\X,\nu)$,
$$\mathcal{K}:f\mapsto \left(\mathcal{K}f:x\mapsto \int_{\X}K(x,y)f(y)\der\nu(y)\right),$$
is locally of trace class with eigenvalues in $[0,1]$. $X$ is said to be a determinantal point process on $(\X,\mathcal{B}(\X),\nu)$ with kernel $K$ if its joint intensity functions exist and satisfy
\begin{equation}
\rho_n(x_1,\cdots,x_n)=\det\left(K[x]\right)
\end{equation}
for all integer $n$ and for all $x=(x_1,\cdots,x_n)\in\X^n$.
\end{defin}

When $\X=\R^d$ the DPP is said to be continuous and when $\X=\Z^d$ then the DPP is said to be discrete. In the latter case, the integral operator $\mathcal{K}$ can be seen as the infinite matrix $K[\Z^d]\defeq(K(x,y))_{x,y\in \Z^d}$. Moreover, when $K$ is translation invariant (resp. radial) then the associated DPP is stationary (resp. isotropic). Finally, we write $\mathcal{I}$ for the identity operator on $L^2(\X,\nu)$ and $\mathcal{I}_W$ for its restriction on $L^2(W,\nu)$ for any $W\subset\X$.

\bigskip

Let $X$ be a DPP on $\X$ with kernel $K$ and associated integral operator $\mathcal K$. If $\| \mathcal K\|<1$, then $X$ admits on any compact set $W\subset \X$ a density  with respect to the unit rate homogenous Poisson point process on $W$, as described now. We recall that for any compact set $W$, the projection $\mathcal{K}_W$ of $\mathcal{K}$ on $L^2(W,\nu)$ is a compact operator whose kernel can be written by Mercer's theorem as
$$K_W(x,y)=\sum_i \lambda^W_i\phi^W_i(x)\bar\phi^W_i(y),~\forall x,y\in W,$$
where the $\lambda_i^W$ are the eigenvalues of $\mathcal{K}_W$ and the $\phi^W_i$ are the corresponding family of orthonormal eigenfunctions (see \cite{Hough} for more details). When $\|\mathcal{K}\|<1$, we define the operator $\mathcal{L}=\mathcal{K}(\mathcal{I}-\mathcal{K})^{-1}$ and denote by $L$ its kernel. Similarly, we define the operator $\mathcal{L}_{[W]}=\mathcal{K}_W(\mathcal{I}_W-\mathcal{K}_W)^{-1}$  and denote by $L_{[W]}$ its kernel whose spectral decomposition reads
\begin{equation}\label{LWexpression}
L_{[W]}(x,y)\defeq\sum_i \frac{\lambda^W_i}{1-\lambda^W_i}\phi^W_i(x)\bar\phi^W_i(y).
\end{equation}
Note that contrary to $\mathcal K_W$ with $\mathcal K$,  the operator $\mathcal{L}_{[W]}$ does not correspond to the restriction of $\mathcal{L}$ to $L^2(W,\nu)$.  Another difference between $\mathcal{L}_{[W]}$ and $\mathcal{L}$ is that when $X$ is a stationary (resp. isotropic) DPP, $L(x,y)$ only depends on $y-x$ (resp. $\|y-x\|$) but this is not necessarily true for $L_{[W]}$.

\begin{theo}[\cite{Macchi, Sosh}] \label{Likelihood_creation}
Let $X$ be a DPP on $(\X,\nu)$ with kernel $K$ whose eigenvalues lie in $[0,1[$ and let $W$ be a compact set of $\X$. Then $X\cap W$ is absolutely continuous with respect to the homogeneous Poisson point process on $W$ with intensity $1$ and has density
$$f(x)=\exp(|W|)\det(\mathcal{I}_W-\mathcal{K}_W)\det(L_{[W]}[x])$$
for all $x\in\cup_n W^n$.
\end{theo}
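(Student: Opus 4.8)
The plan is to reduce the statement to the well-known formula for the density of an $L$-ensemble on a compact window, and then to verify that the operator $\mathcal{L}_{[W]} = \mathcal{K}_W(\mathcal{I}_W - \mathcal{K}_W)^{-1}$ is precisely the kernel producing the restricted process $X \cap W$. First I would fix a compact $W \subset \X$ and work entirely inside $L^2(W,\nu)$. Since the eigenvalues of $\mathcal{K}$ lie in $[0,1[$ and $\mathcal{K}_W$ is the compression of $\mathcal{K}$ to $L^2(W,\nu)$, the eigenvalues $\lambda_i^W$ of $\mathcal{K}_W$ also lie in $[0,1[$ (by the min–max characterisation, $\lambda_{\max}(\mathcal{K}_W) \le \lambda_{\max}(\mathcal{K}) < 1$), so $\mathcal{I}_W - \mathcal{K}_W$ is boundedly invertible and the spectral expansion \eqref{LWexpression} of $L_{[W]}$ converges; moreover $\mathcal{K}_W$ trace class forces $\mathcal{L}_{[W]}$ trace class since $\lambda_i^W/(1-\lambda_i^W) \le (1-\lambda_{\max}(\mathcal{K}_W))^{-1}\lambda_i^W$. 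This also makes the Fredholm determinant $\det(\mathcal{I}_W - \mathcal{K}_W) = \prod_i (1-\lambda_i^W)$ well defined and strictly positive.

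Next I would recall the standard fact (from the theory of $L$-ensembles, e.g.\ \cite{Hough}) that if $\mathcal{L}_{[W]}$ is a non-negative, self-adjoint, trace-class operator on $L^2(W,\nu)$ with kernel $L_{[W]}$, then there is a well-defined point process on $W$ whose density with respect to the unit-rate Poisson process is
$$g(x) = \frac{1}{\det(\mathcal{I}_W + \mathcal{L}_{[W]})}\,\exp(|W|)\,\det\big(L_{[W]}[x]\big),$$
and that this process is determinantal with kernel $\mathcal{L}_{[W]}(\mathcal{I}_W + \mathcal{L}_{[W]})^{-1}$. The computational heart of the argument is then the algebraic identity: with $\mathcal{L}_{[W]} = \mathcal{K}_W(\mathcal{I}_W - \mathcal{K}_W)^{-1}$ one has $\mathcal{I}_W + \mathcal{L}_{[W]} = (\mathcal{I}_W - \mathcal{K}_W)^{-1}$ and $\mathcal{L}_{[W]}(\mathcal{I}_W + \mathcal{L}_{[W]})^{-1} = \mathcal{K}_W$. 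Both follow immediately by functional calculus applied to the scalar function $t \mapsto t/(1-t)$ on the spectrum $[0,\lambda_{\max}(\mathcal{K}_W)] \subset [0,1[$, which is legitimate precisely because $\|\mathcal{K}_W\| < 1$. The determinant identity gives $\det(\mathcal{I}_W + \mathcal{L}_{[W]})^{-1} = \det(\mathcal{I}_W - \mathcal{K}_W)$, and substituting into $g$ yields exactly the claimed $f(x) = \exp(|W|)\det(\mathcal{I}_W - \mathcal{K}_W)\det(L_{[W]}[x])$.

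Finally I would check that the process so constructed really is $X \cap W$. By Definition \ref{Def2}, $X$ is characterised by its joint intensities $\rho_n = \det(K[\cdot])$; the restriction $X \cap W$ then has joint intensities given by the same determinantal formula with $K$ replaced by its restriction to $W$, i.e.\ with kernel $K_W$ — equivalently, with the integral operator $\mathcal{K}_W$. On the other hand the $L$-ensemble attached to $\mathcal{L}_{[W]}$ is determinantal with kernel $\mathcal{L}_{[W]}(\mathcal{I}_W + \mathcal{L}_{[W]})^{-1} = \mathcal{K}_W$ by the identity above, hence has the same joint intensities as $X \cap W$; since a simple point process on a bounded set with finitely many points a.s.\ is determined by its joint intensities, the two processes coincide in law, and their densities agree. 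The main obstacle is the functional-calculus bookkeeping in the second step: one must be careful that $\mathcal{L}_{[W]}$ is genuinely trace class (not merely bounded) so that the Fredholm determinants and the $L$-ensemble density are well defined, and that the inversion $t \mapsto t/(1-t)$ is applied on a spectrum bounded away from $1$, which is exactly where the hypothesis that the eigenvalues of $\mathcal{K}$ lie in the closed-open interval $[0,1[$ — together with $\mathcal{K}_W$ being a compression — is used. The rest is the classical Macchi–Soshnikov construction, which I would cite rather than reprove.
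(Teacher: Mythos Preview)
The paper does not prove this theorem; it is stated with attribution to \cite{Macchi, Sosh} and used as a black box, so there is no in-paper argument to compare against. Your sketch is correct and is exactly the classical route: compress $\mathcal{K}$ to $W$, note that $\|\mathcal{K}_W\|<1$ so that $\mathcal{L}_{[W]}=\mathcal{K}_W(\mathcal{I}_W-\mathcal{K}_W)^{-1}$ is a well-defined trace-class non-negative operator, invoke the $L$-ensemble density formula, and then use the functional-calculus identities $\mathcal{I}_W+\mathcal{L}_{[W]}=(\mathcal{I}_W-\mathcal{K}_W)^{-1}$ and $\mathcal{L}_{[W]}(\mathcal{I}_W+\mathcal{L}_{[W]})^{-1}=\mathcal{K}_W$ to identify the resulting determinantal process with $X\cap W$ via its correlation functions. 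This is precisely the construction carried out in the cited references (and in \cite{Hough}), so your plan would reproduce the literature proof rather than diverge from it.
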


In the above expression, the first determinant corresponds to the Fredholm determinant of the operator $\mathcal{I}_W-\mathcal{K}_W$, which is equal to $\prod_i (1-\lambda^W_i)$, while the second determinant is the standard matrix determinant.

\section{Likelihood of DPPs} \label{SEC:DEFLL}

\subsection{Likelihood estimation}

Let $X$ be a DPP on $(\X,\nu)$ with kernel $K^{\theta^*}$ belonging to a parametric family $\{K^\theta,\theta\in\Theta\}$, where $\Theta$ is the space of parameters. We consider the likelihood estimation of $\theta^*$, as described below, from a unique observation of $X\cap W$ where $W$ is a bounded subset of $\X$. We furthermore consider an increasing domain asymptotic framework, meaning that our asymptotic properties stand when $n\to\infty$ and $W=W_n$ is an increasing sequence of subsets of $\X$.

For the standard parametric families of continuous DPPs in $\R^d$, as those presented in Section~\ref{sec:ParFam}, the parameter space $\Theta$ is a  subset of $\R^p$ for some integer $p\geq 1$. However we do not need to make such an assumption for our purpose, and the likelihood approximation that  we develop below is true whatever $\Theta$ is, provided the associated DPP is stationary. In particular the parameter $\theta$ in $K^\theta$ can be the kernel $K$ itself.  This last setting makes sense when $\X=\Z^d$ where the whole matrix $K_W$ can be estimated from a realization of $X\cap W$, as considered in image analysis in \cite{launay2020}.

From Theorem~\ref{Likelihood_creation}, we get that the (normalized) log-likelihood of $X\cap W$ for any parametric family of DPPs reads:
\begin{equation}\label{LL}
l(\theta|X)=1+\frac{1}{|W|}\big(\logdet(\mathcal{I}_{W}-\mathcal{K}^\theta_{W})+\logdet(L^\theta_{[W]}[X\cap W])\big)
\end{equation}
where $\mathcal K^\theta$ is the integral operator associated to $K^\theta$ and $L^\theta_{[W]}$ is given by \eqref{LWexpression}, the eigenvalues and eigenvectors then depending on $\theta$. 
 The maximum likelihood estimate of $\theta$ is then
$$\hat\theta\in\argmax_{\theta\in\Theta} l(\theta|X).$$

Computing the log-likelihood \eqref{LL} requires knowing the spectral decomposition of $\mathcal{K}^\theta_{W}$ for all $\theta$. This is possible in the case of DPPs on a finite space whose kernels are finite matrices, provided the dimension of the space is not too large, but this spectral decomposition is usually not known for continuous DPPs. This motivates the following approximations. 

\subsection{Approximation of the likelihood for stationary DPPs}

When $\X=\R^d$ and the observation window $W$ is rectangular, an approximation of \eqref{LL} for stationary kernels is proposed in \cite{Lavancier}, using a truncated Fourier series. 
For example, if $W=[-l_1/2,l_1/2]\times\dots\times [-l_d/2,l_d/2]$ for some $l_1,\dots,l_d>0$,  denoting by $\Delta$ the diagonal matrix with diagonal entries $l_1,\dots,l_d$ (so that $\det(\Delta)=|W|$), 
this relies on the following approximation of the kernel:
\begin{equation}\label{approx1}
K^\theta(x,y)=K_0^\theta(x-y)\approx\sum_{\underset{\|k\|<N}{k\in\Z^d}} \frac{c_k}{|W|} e^{i2\pi\langle k, \Delta^{-1} (y-x)\rangle},
\end{equation}
where
\begin{equation}\label{approx2}
c_k\defeq \int_W K^\theta_0(t)e^{-i2\pi \langle k,\Delta^{-1}t\rangle}\der t\approx\hat K_0^\theta(\Delta^{-1} k),
\end{equation}
for some truncation constant $N$. Note that \eqref{approx1} is an equality if $(x-y)\in W$ and $N=\infty$, while  \eqref{approx2} is an equality when $K_0^\theta$ vanishes outside $W$.
Since the eigenvalues and eigenvectors of this kernel approximation are respectively $\hat K_0^\theta(\Delta^{-1} k)$ and $x\mapsto |W|^{-1/2}e^{i2\pi \langle k,\Delta^{-1}x\rangle}$, the log-likelihood \eqref{LL} is then approximated in \cite{Lavancier} by
\begin{equation}\label{eq:Fourier_approx}
 1  +\frac{1}{|W|}\left(\sum_{\underset{\|k\|<N}{k\in\Z^d}}\log(1-\hat K_0^\theta(\Delta^{-1} k))+\logdet(L^\theta_{app}[X\cap W])\right),
 \end{equation}
 where 
\begin{equation}\label{Lapp}
L^\theta_{app}(x,y)\defeq \frac 1 {|W|} \sum_{\underset{\|k\|<N}{k\in\Z^d}}\frac{\hat K_0^\theta(\Delta^{-1}k)}{1-\hat K_0^\theta(\Delta^{-1}k)}e^{i2\pi  \langle k,\Delta^{-1}(y-x)\rangle}.
\end{equation}
The same kind of approximations can be carried out when $\X=\Z^d$, still for rectangular windows $W$, in which case $\hat K_0^\theta$ in \eqref{eq:Fourier_approx} has to be replaced by the discrete Fourier transform of $K_0(x)$, $x\in W$, and no truncation is needed since the series become a finite sum. This approximation in $\Z^d$ amounts to consider a periodic extension of the stationary DPP outside $W$, see  \cite{launay2020} for details.

Our new approximation is based on a different expression of \eqref{LL} in terms of the self-convolution products of the function $(x,y)\mapsto\cara{W}(x)K^\theta(x,y)\cara{W}(y)$ through the following identities (see \cite{Shirai} for example). For all $W\subset\X$,
\begin{equation} \label{convol1}
\logdet(\mathcal{I}_{W}-\mathcal{K}^\theta_{W})=-\sum_{k=1}^{\infty}\frac{1}{k}\int_{W^k}K^\theta(x_1,x_2)\cdots K^\theta(x_{k-1},x_k)K^\theta(x_k,x_1)\der\nu^k(x)
\end{equation}
and for all $x,y\in W$,
\begin{align}
L^\theta_{[W]}(x,y) &=K^\theta(x,y)+\sum_{k=1}^{\infty}\int_{W^k}K^\theta(x,z_1)K^\theta(z_1,z_2)\cdots K^\theta(z_{k-1},z_k)K^\theta(z_k,y)\der\nu^k(z),\\
L^\theta(x,y)& =K^\theta(x,y)+\sum_{k=1}^{\infty}\int_{\X^k}K^\theta(x,z_1)K^\theta(z_1,z_2)\cdots K^\theta(z_{k-1},z_k)K^\theta(z_k,y)\der\nu^k(z).  \label{convol2}
\end{align}

These convolution products are too difficult to be computed in the general case, but for stationary DPPs satisfying $\|\mathcal{K^\theta}\|<1$ then $\hat L^\theta_0=\hat K^\theta_0/(1-\hat K^\theta_0)$ as a consequence of \eqref{convol2}. Accordingly,  as justified later in Proposition~\ref{DeterministicCV}, an asymptotic approximation when the observation window $W$ is large enough  gives 
\begin{align}
&L^\theta_{[W]}(x,y)\approx L^\theta_0(y-x)=\int_{\X^*}\frac{\hat K^\theta_0(t)}{1-\hat K^\theta_0(t)}\exp(2i\pi \langle t , y-x\rangle)\der t,\label{app1}\\
&\frac{1}{|W|}\logdet(\mathcal{I}_{W}-\mathcal{K}^\theta_{W})\approx\int_{\X^*}\log(1-\hat K^\theta_0(x))\der x.\label{app2}
\end{align}
This motivates our following approximation of the log-likelihood:
\begin{equation} \label{approxLL}
\tilde l(\theta|X)\defeq 1+\int_{\X^*}\log(1-\hat K^\theta_0(x))\der x+\frac{1}{|W|}\logdet(L^\theta_0[X\cap W]),
\end{equation}
where $L_0^\theta$ is given in \eqref{app1}. This approximation, like \eqref{eq:Fourier_approx},  can be computed whenever we know the expression of $\hat K^\theta_0$, which is the case for all classical families of stationary DPPs built from covariance functions, as those presented in Section \ref{sec:ParFam}. The main advantage of \eqref{approxLL} compared to the Fourier approximation \eqref{eq:Fourier_approx} is that it is not limited to rectangular windows $W$ but can be used with any window shape. It has also the advantage of not requiring any tuning  parameter of any kind compared to the choice of $N$ in \eqref{eq:Fourier_approx} or alternative moment methods \cite{biscio:lavancier:17,lavancier2019adaptive}. 

\bigskip

The idea to use a convolution approximation was actually briefly suggested in \cite[Appendix L]{Lavancier} but the  associated approximation was given under a more restrictive form that required knowing an exact expression of the iterative self-convolution products of $K^\theta_0$ for all $\theta$. Moreover, an important drawback was pointed out in \cite{Lavancier} concerning the presence of possible edge effects, which may affect the quality of estimation of strongly repulsive DPPs. As shown in Section \ref{sec:Simu}, this problem also occurs with our approximation: while it works really well with DPPs with low repulsion, and therefore minimal edge effects, some edge corrections are needed for more repulsive DPPs. The next section deals with this aspect.

\subsection{Periodic edge-corrections} \label{sec:periodic_correction}

In order to alleviate the possible edge-effects mentioned above, we suggest to introduce a periodic approximation. 
We assume in this section that the observation window $W\subset\X$ is rectangular. Without loss of generality, we set $W=\left([-l_1/2,l_1/2]\times\cdots\times[-l_d/2,l_d/2]\right)\cap \X$. Using a periodic approximation amounts to consider the observation window as the flat torus $\T_W\defeq \X\backslash l_1\Z\times\cdots\times\X\backslash l_d\Z$.  This way, points close to the border of the window $W$ are brought close to each other in order to compensate edge effects.

More precisely, we replace all instances of $L_0^\theta(y-x)$  in the stochastic part $L^\theta_0[X\cap W]$ of \eqref{approxLL} by 
$$L^{\theta,\mathbb{T}}_0(y-x)\defeq L_0^\theta
\begin{pmatrix}
y_1-x_1~\textrm{mod}(l_1) \\
\vdots \\
y_d-x_d~\textrm{mod}(l_d)
\end{pmatrix}.$$
This is equivalent to replacing $L_0^\theta$ by a periodic version of itself on $W$. The approximate likelihood then reads for any parameter $\theta$:

\begin{equation} \label{approxLLtorus}
\tilde l^{~\T}(\theta|X)\defeq 1+\int_{\X^*}\log(1-\hat K_0^\theta(x))\der x+\frac{1}{|W|}\logdet\left(L^{\theta,\mathbb{T}}_0[X\cap W]\right).
\end{equation}

Note that since we consider a periodic version of $L_0^\theta$ on $W$ then it can be approximated by its Fourier series, which corresponds to the idea of the approximation \eqref{eq:Fourier_approx} of \cite{Lavancier}. This is why both \eqref{approxLLtorus} and \eqref{eq:Fourier_approx} are nearly equal, see Figure~\ref{ComparingSmoothness} for an example. But approximating $L^\theta_{[W]}$ as in \eqref{approxLLtorus} instead of using a truncation of its Fourier series leads to a smoother likelihood and overall slightly better results, as well as a more computationally efficient method. Indeed, as explained in \cite{Lavancier}, the Fourier approximation \eqref{Lapp} of $L^\theta_{[W]}$ is a sum of $(2N)^d$ terms where the truncation parameter $N$ is chosen such that 
$$\sum_{n\in\Z^d\cap [-N,N]^d} \hat K_0^\theta(n)>0.99\sum_{n\in\Z^d} \hat K_0^\theta(n).$$
For important parametric models, including the Whittle-Matern and the Bessel families (see Section~\ref{sec:ParFam}), $\hat K_0^\theta(n)$ has a polynomial decay with respect to $n$, leading to a large choice of $N$ in \eqref{Lapp}. In comparison, as detailed in section \ref{sec:L0}, depending on the parametric model, we either have an analytic expression of $L_0^\theta$ or, when the self convolution products of $K_0^\theta$ are known, we can express $L_0^\theta$ as the infinite sum
\begin{equation} \label{eq:L0_infinite_sum}
L_0^\theta(x)=\sum_{n\geq 1}{(K_0^\theta)}^{*n}(x)
\end{equation}
where
$$|(K_0^\theta)^{*n}(x)|=\left|\int_{\X^*} (\hat {K^\theta_0})^n(t)e^{-i2\pi\langle x,t\rangle}\der t\right|\leq K_0^\theta(0)\|\hat K_0^\theta\|_{\infty}^{n-1}$$
has an exponential decay with respect to $n$. The approximation of $L_{[W]}^\theta$ by \eqref{eq:L0_infinite_sum} will then require much fewer terms than the approximation by \eqref{Lapp}.

\begin{figure}
\centering
   \includegraphics[width=10cm]{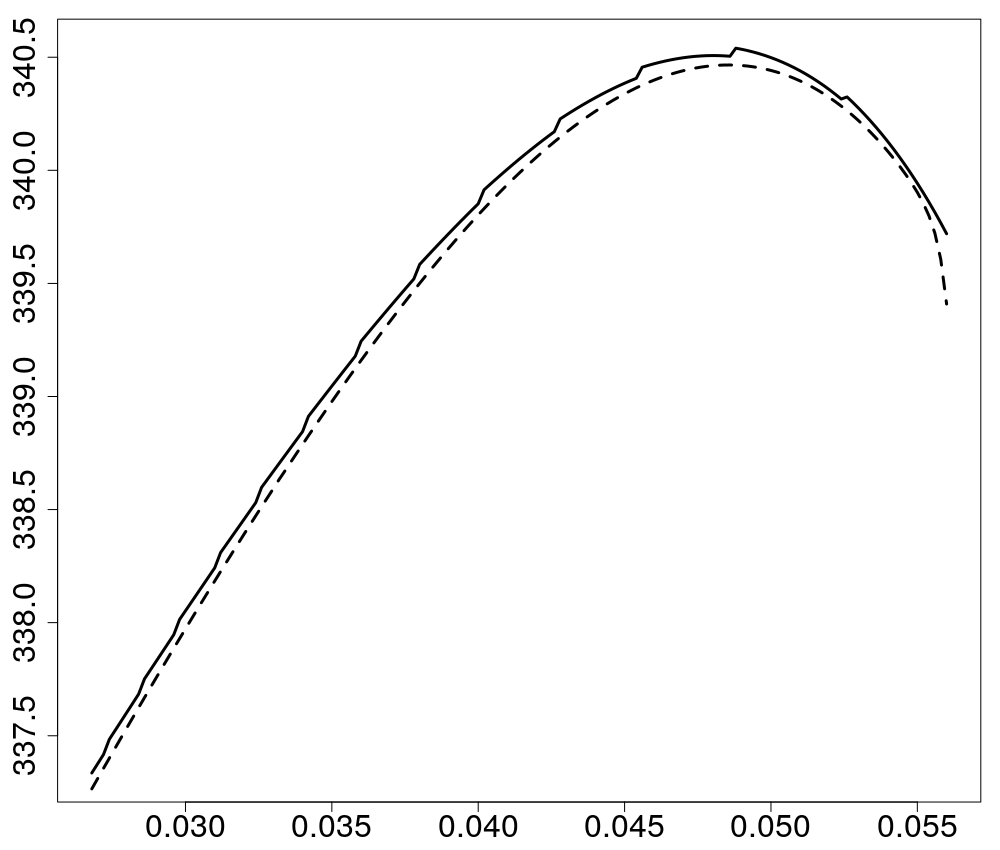}
	   \caption{\label{ComparingSmoothness} Comparison between the two approximations \eqref{eq:Fourier_approx} (solide line) and \eqref{approxLLtorus} (dashed line) of $\alpha\mapsto l(\rho^*,\alpha|X)$ where $X$ is a realization of a DPP with Gaussian-type kernel (see Table~\ref{parametric models})  with true parameters $\rho^*=100$ and $\alpha^*=0.05$ on the window $W=[0,1]^2$.}
\end{figure}

Despite the appealing of the approximation \eqref{approxLLtorus}, there is one possible issue in that  the determinant of $L^{\theta,\mathbb{T}}[X\cap W]$ is not necessarily positive. Remember that this positivity is guaranteed for any $X\cap W$ whenever the kernel $L^{\theta,\mathbb{T}}$ is positive, or equivalently whenever  its associated  integral operator has positive eigenvalues. But due to the periodicity of $L_0^{\theta,\mathbb{T}}$, these eigenvalues correspond to the coefficients of its Fourier series that read for any $k=(k_1,\dots,k_d)$
\begin{equation}\label{EVperiod}
\frac{1}{|W|}\int_W L^\theta_0(x)\exp\left(-2i\pi\sum_{i=1}^d \frac{k_ix_i}{l_i}\right)\der\nu(x).
\end{equation}
When $W$ is large, the above integral is approximately equal to $\hat L^\theta_0(k_1/l_1,\cdots,k_d/l_d)$ which is positive.  This shows that we can expect the determinant of $L^\mathbb{\theta, T}[X\cap W]$ to be positive when $W$ is large enough. In our simulations displayed in Section~ \ref{sec:Simu}, this determinant was positive in all runs, except a few times with the Bessel-type kernel associated to high values of the repulsion parameter $\alpha$.

Finally, note that extending the above edge correction to non rectangular windows is not straightforward and we do not provide a general solution. We however introduce in the simulation example of Section~\ref{sec: simuR} a procedure that can be adapted to any isotropic DPP model.

\subsection{Theoretical Results}\label{section theory}
In order to verify the theoretical soundness of the asymptotic log-likelihood approximation \eqref{approxLL} we want to show that $|\tilde l(\theta|X)-l(\theta|X)|\cvps 0$ uniformly for all $\theta\in\Theta$ when the observation window $W$ grows towards $\X$. 
For this purpose, we consider a sequence of increasing observation windows $W_n$ satisfying the following assumptions.

\medskip

\noindent {\bf Condition $(\mathcal{W})$}: $W_n$ is an increasing sequence of compact subsets of $\X$ such that $\bigcup_{n\geq 0}W_n=\X$ and there exists an increasing non-negative sequence $r_n\in\R_+^\N$ such that $r_n\cvn \infty$ and
\begin{equation} \label{CondWindow}
|(\partial W_n\oplus r_n)\cap W_n|=o(|W_n|),
\end{equation}
where, by a small abuse of notation, we write $\partial W_n\oplus r_n$ for the Minkowski sum of $\partial W_n$, the boundary of $W_n$, and a centered ball with radius $r_n$, which corresponds to the set of points whose distance to the boundary of $W_n$ is lower than $r_n$.
Moreover,
\begin{equation} \label{notslow}
\forall\delta>0,~\sum_{n\geq 0}\exp(-\delta|W_n|)<\infty.
\end{equation}

\medskip

The first assumption \eqref{CondWindow} means that the boundary of $W_n$ must not be too irregular. This is not an issue in most practical applications. For example, if $\X=\R^d$ and $(W_n)_{n\geq 0}$ is a sequence of spheres with radius $R_n\cvn\infty$, then \eqref{CondWindow} is satisfied with $r_n=\sqrt{R_n}$.
As another example, assume that $(W_n)_{n\geq 0}$ is a sequence of rectangular windows $[-l_{1,n}/2,l_{1,n}/2]\times\cdots\times[-l_{d,n}/2,l_{d,n}/2]$ such that $l_{i,n}\cvn\infty$ for each $i$, then 
\begin{multline*}
(\partial W_n\oplus r_n)\cap W_n\subset\big([-l_{1,n}/2, -l_{1,n}/2+r_n]\cup[l_{1,n}/2-r_n,l_{1,n}/2]\big)\times\cdots\\
\times\big([-l_{d,n}/2, -l_{d,n}/2+r_n]\cup[l_{d,n}/2-r_n,l_{d,n}/2]\big)
\end{multline*}
hence
$$\frac{|(\partial W_n\oplus r_n)\cap W_n|}{|W_n|}\leq\prod_{i=1}^d\left(\frac{2r_n}{l_{i,n}}\right)$$
which vanishes when $n$ goes to infinity with the choice $r_n=\sqrt{\min_i l_{i,n}}$. The second hypothesis \eqref{notslow} is a technical assumption needed to get the almost sure convergence in Proposition~\ref{approxLLCV}. Without this assumption, the convergence remains true but in probability instead of  almost surely.

\bigskip

We first consider the uniform convergence of the deterministic part of \eqref{LL}, which is the Fredholm log-determinant. Its asymptotic behaviour given below is justified  in Section~\ref{DetPart} and was already proved in a slightly different setting in \cite[Proposition 5.9]{Shirai}.
\begin{prop} \label{DeterministicCV}
Let $\{K^\theta_0:\X\rightarrow\R, ~\theta\in\Theta\}$ be a family of functions in $L^1(\X,\nu)$ with integrable Fourier transforms $\hat K^{\theta}_0$ taking values in $[0,M]$ for some $M<1$ and let $(W_n)_{n\geq 0}$ satisfy Condition $(\mathcal{W})$. Additionally, we assume that $\sup_{\theta\in\Theta}K^{\theta}(0)<\infty$ and that the function $x\mapsto\sup_{\theta\in\Theta}|K^{\theta}_0(x)|$ is integrable on $(\X,\nu)$. We denote by $\mathcal{K}^{\theta}_{W_n}$ the projection on $L^2(W_n)$ of the integral operator associated with the kernel $(x,y)\mapsto K^{\theta}_0(x-y)$. Then,
$$\sup_{\theta\in\Theta}\left|\frac{1}{|W_n|}\logdet(\mathcal{I}_{W_n}-\mathcal{K}^{\theta}_{W_n})-\int_{\X^*}\log(1-\hat K^{\theta}_0(x))\der x\right|\cvn 0.$$
\end{prop}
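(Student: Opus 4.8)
The plan is to use the power-series expansion \eqref{convol1} for the Fredholm log-determinant and compare it term-by-term with the analogous series for the integral $\int_{\X^*}\log(1-\hat K^\theta_0(x))\der x$. Writing $K^\theta(x,y)=K_0^\theta(x-y)$, the $k$-th term of \eqref{convol1} is $-\frac{1}{k}\int_{W_n^k}K_0^\theta(x_1-x_2)\cdots K_0^\theta(x_k-x_1)\der\nu^k(x)$. On the other hand, expanding $-\log(1-u)=\sum_{k\geq 1}u^k/k$ (valid since $\hat K_0^\theta\leq M<1$) and using that the Fourier transform of a $k$-fold convolution is $(\hat K_0^\theta)^k$, one gets $\int_{\X^*}\log(1-\hat K_0^\theta(x))\der x=-\sum_{k\geq 1}\frac{1}{k}(K_0^\theta)^{*k}(0)$, where $(K_0^\theta)^{*k}(0)=\int_{\X^{k-1}}K_0^\theta(z_1)K_0^\theta(z_2-z_1)\cdots K_0^\theta(-z_{k-1})\der\nu^{k-1}(z)$. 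So for each $k$ the quantity to control is
\begin{equation*}
\Delta_{k,n}(\theta):=\frac{1}{|W_n|}\int_{W_n^k}K_0^\theta(x_1-x_2)\cdots K_0^\theta(x_k-x_1)\der\nu^k(x)-(K_0^\theta)^{*k}(0).
\end{equation*}

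Next I would estimate $\Delta_{k,n}(\theta)$ using the boundary condition \eqref{CondWindow}. The idea: in the $|W_n|$-normalized integral over $W_n^k$, change variables to $x_1$ and the differences $u_i=x_{i+1}-x_i$; integrating out $x_1$ over $W_n$ gives roughly $|W_n|$ times the full-space convolution integral, provided all the intermediate points $x_1,x_1+u_1,\dots$ stay inside $W_n$. The discrepancy is governed by configurations where $x_1$ lies within distance $\sum_i\|u_i\|$ of $\partial W_n$. Splitting according to whether $\sum_i\|u_i\|\leq r_n$ or not: on the first event the error is bounded by $\frac{|(\partial W_n\oplus r_n)\cap W_n|}{|W_n|}$ times $\int_{(\X)^{k-1}}\prod|K_0^\theta(u_i)|\,\der u = \bigl(\int|K_0^\theta|\bigr)^{k-1}\cdot(\text{a }K_0^\theta(0)\text{ factor})$; on the second event we bound crudely by the tail $\int_{\sum\|u_i\|>r_n}\prod_{i}\overline{K_0}(u_i)\,\der u$ where $\overline{K_0}(x)=\sup_\theta|K_0^\theta(x)|$, which is integrable by hypothesis, so this tail is $o(1)$ uniformly in $\theta$ as $n\to\infty$ (for fixed $k$; and summably in $k$ after using $\|\overline{K_0}\|_1$-type bounds). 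Using $\sup_\theta K^\theta(0)<\infty$ and $\|\overline{K_0}\|_1<\infty$, each $|\Delta_{k,n}(\theta)|\leq C\,\beta^{k}\bigl(\tfrac{|(\partial W_n\oplus r_n)\cap W_n|}{|W_n|}+\epsilon_{k,n}\bigr)$ with $\beta<1$ (one may take $\beta$ related to $\max(M,\text{something}<1)$, or simply absorb a geometric factor from $\|\overline{K_0}\|_1^{k-1}$ after noting $\hat K_0^\theta\leq M$ forces $\|\overline{K_0}\|_1$ or rather $\|\hat K_0^\theta\|_\infty\leq M$ to tame the growth), and $\sum_k\epsilon_{k,n}\to 0$.

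Then I would sum over $k$: $\sup_\theta\bigl|\frac{1}{|W_n|}\logdet(\mathcal{I}_{W_n}-\mathcal{K}^\theta_{W_n})-\int_{\X^*}\log(1-\hat K_0^\theta)\bigr|\leq\sum_{k\geq 1}\frac{1}{k}\sup_\theta|\Delta_{k,n}(\theta)|$. The first block of terms, proportional to $\frac{|(\partial W_n\oplus r_n)\cap W_n|}{|W_n|}\sum_k\frac{\beta^k}{k}$, vanishes because the prefactor is $o(1)$ by \eqref{CondWindow} and the series converges. The second block $\sum_k\frac{1}{k}\epsilon_{k,n}$ needs a dominated-convergence argument in $k$: each $\epsilon_{k,n}\to 0$ as $n\to\infty$, and $\epsilon_{k,n}\leq C\beta^{k}$ uniformly, so the sum goes to $0$. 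Uniformity in $\theta$ is automatic throughout because every bound was phrased via $\overline{K_0}$ and $\sup_\theta K^\theta(0)$.

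The main obstacle I expect is making the geometric decay in $k$ of the $\Delta_{k,n}$ bounds genuinely uniform in $\theta$ while simultaneously being summable. The naive bound $\prod_i|K_0^\theta(u_i)|\leq K_0^\theta(0)^{?}$ is wasteful; the clean route is to use $\|K_0^{\theta,*k}\|_\infty\leq K_0^\theta(0)\|\hat K_0^\theta\|_\infty^{k-1}\leq K_0^\theta(0)M^{k-1}$ (the inequality quoted in the excerpt around \eqref{eq:L0_infinite_sum}) for the "bulk" term, together with an integrable-tail bound for the boundary term — but one has to be careful that the operator-norm / sup-norm trace-cycle identity \eqref{convol1} is being truncated correctly and that the error term's $k$-dependence is controlled by the single integrable majorant $\overline{K_0}$ rather than by $k$-dependent quantities. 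Getting the interchange of $\sum_k$ and $\lim_n$ rigorous, i.e. a uniform-in-$\theta$ dominated convergence over the index $k$, is where the hypotheses "$\sup_\theta K^\theta(0)<\infty$" and "$\overline{K_0}\in L^1$" do the real work and must be invoked with care.
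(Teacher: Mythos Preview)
Your proposal is correct and follows essentially the same route as the paper: expand both sides as $-\sum_{k\geq 1}k^{-1}(\cdot)$, prove term-wise uniform-in-$\theta$ convergence via the boundary/tail split (this is exactly the paper's Lemma~\ref{CVtransinv}), and interchange $\sum_k$ with $\lim_n$ by dominated convergence. Concerning the obstacle you flag at the end: the paper does \emph{not} try to bound $|\Delta_{k,n}|$ in a factored form $C\beta^k(\cdot)$ --- which indeed would be awkward since $\|\overline{K_0}\|_1$ need not be $<1$ --- but simply uses the operator-norm inequality $|\tr((\mathcal{K}^\theta_{W_n})^k)|\leq\|\mathcal{K}^\theta\|^{k-1}\tr(\mathcal{K}^\theta_{W_n})\leq M^{k-1}|W_n|\sup_\theta K_0^\theta(0)$ as the summable dominating sequence, while the boundary/tail estimate is used only for the fixed-$k$ convergence, where no geometric decay in $k$ is required.
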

Concerning the stochastic part of the log-likelihood \eqref{LL}, that is  $\logdet(L^\theta_{[W]}[X\cap W])$, its  behaviour is much more difficult to control in general. The main issue is that the determinant vanishes when two points of $X\cap W$ gets arbitrarily close to each other, but no relationship between how close these points are from each other and the value of the determinant is known, making the likelihood difficult to control. To our knowledge, the only related result is that, in most cases, the lowest eigenvalue of $L^\theta_{[W]}[X]$ is non zero iff $\inf_{x,y\in X}\|y-x\|>0$ \cite{min}. The latter condition is automatically satisfied if $X$ is supported on a lattice but not when $\X=\R^d$. The next result focuses on the first case. 

\begin{prop} \label{approxLLCV}
Let $(W_n)_{n\in\N}$ satisfy Condition $(\mathcal{W})$ and let $\{K^\theta,\theta\in\Theta\}$ be a family of translation-invariant DPP kernels on $\Z^d$ such that $\Theta$ is a compact set of $\R^p$ for some integer $p\geq 1$ and the function $(\theta,x)\mapsto \hat K^\theta(x)$ is continuous on $\Theta\times\R^d$. Additionally, assume there exists constants $A,\tau>0$ and $M<1$ satisfying
\begin{equation}\label{assumptions}
\forall\theta\in\Theta,~\forall x\in\Z^d,~~|K^\theta_0(x)|\leq\frac{A}{1+\|x\|^{d+\tau}}~\mbox{and}~0<\hat K^\theta_0(x)\leq M.
\end{equation}
Let $X$ be the realization of a DPP on $\Z^d$ with kernel $K^{\theta^*}$, $\theta^*\in\Theta$. Then, for all $\theta\in\Theta$,
$$\sup_{\theta\in\Theta}\frac{1}{|W_n|}\big|\logdet(L_0^\theta[X\cap W_n])-\logdet(L^\theta_{[W_n]}[X\cap W_n])\big|\cvps 0.$$
\end{prop}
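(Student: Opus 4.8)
The plan is to control the difference $\logdet(L_0^\theta[X\cap W_n])-\logdet(L^\theta_{[W_n]}[X\cap W_n])$ by writing it as $\logdet\big(L^\theta_{[W_n]}[X\cap W_n]^{-1}L_0^\theta[X\cap W_n]\big)$, which is licit once we know (by the result of \cite{min} cited above, valid on the lattice $\Z^d$ since $\inf_{x,y\in X}\|y-x\|\geq 1>0$) that $L^\theta_{[W_n]}[X\cap W_n]$ is invertible. I would then bound this log-determinant via the trace norm of the difference $E_n(\theta)\defeq L_0^\theta[X\cap W_n]-L^\theta_{[W_n]}[X\cap W_n]$, using an inequality of the form $|\logdet(I+M^{-1}E)|\leq \|M^{-1}\|_{op}\,\|E\|_{*}/(1-\|M^{-1}\|_{op}\|E\|_{op})$ or, more simply, estimating $\logdet$ by the sum of $|\log(1+\mu_i)|$ over the eigenvalues of $M^{-1}E$. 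Here one needs a uniform-in-$\theta$ lower bound on the eigenvalues of $L^\theta_{[W_n]}[X\cap W_n]$: since $\hat K_0^\theta\leq M<1$, the operator $\mathcal L^\theta_{[W_n]}$ has norm at most $M/(1-M)$, but we need control from below — this comes from $\mathcal K^\theta_{W_n}\geq 0$ and a uniform lower bound on $\hat K_0^\theta$ on compact frequency sets together with the decay assumption \eqref{assumptions}, giving $\mathcal L^\theta_{[W_n]}\geq \mathcal K^\theta_{W_n}\geq c\,\mathcal I_{W_n}$ for some $c>0$, hence $\lambda_{\min}(L^\theta_{[W_n]}[X])\geq c$ by a Rayleigh-quotient argument.

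\textbf{Controlling $E_n(\theta)$.} The heart of the matter is to show $\frac{1}{|W_n|}\|E_n(\theta)\|_{*}\cvps 0$ uniformly in $\theta$, where $\|\cdot\|_*$ is the trace (nuclear) norm. Writing $N_n\defeq N(W_n)=|X\cap W_n|$, a crude bound is $\|E_n(\theta)\|_*\leq N_n\max_{x,y\in X\cap W_n}|L_0^\theta(y-x)-L^\theta_{[W_n]}(x,y)|$; a sharper route bounds $\|E_n(\theta)\|_*^2\leq N_n\sum_{x,y\in X\cap W_n}|L_0^\theta(y-x)-L^\theta_{[W_n]}(x,y)|^2$. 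The essential input is the asymptotic control of the $L$-kernel: using the Neumann series \eqref{convol2} for both $L^\theta_{[W_n]}$ and $L^\theta_0$ (the latter being the analogue over all of $\X$), their difference is a sum over $k\geq 1$ of integrals of products of $k+1$ copies of $K^\theta_0$ where at least one integration variable is forced to leave $W_n$; since each such "leakage" path from $x$ to $y$ through a point outside $W_n$ costs a factor that decays with the distance of $x$ (or $y$) to $\partial W_n$ — thanks to the polynomial decay $|K_0^\theta(x)|\leq A/(1+\|x\|^{d+\tau})$ and $\|\hat K_0^\theta\|_\infty\leq M<1$ to sum the series geometrically — we get a bound of the form $|L_0^\theta(y-x)-L^\theta_{[W_n]}(x,y)|\leq \psi(\dist(x,\partial W_n))$ for some summable, decreasing $\psi$, uniformly in $\theta$. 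This is essentially the quantitative statement that should be proved as a lemma (the excerpt promises "an asymptotic control of the $L$ kernel when $W$ grows"); I expect it to be stated and proved in the appendix. Plugging this in, $\sum_{x\in X\cap W_n}\psi(\dist(x,\partial W_n))$ splits into the contribution of points in $\partial W_n\oplus r_n$ (bounded by $N(\partial W_n\oplus r_n)\cdot\psi(0)$, which is $o(|W_n|)$ in expectation by stationarity and Condition \eqref{CondWindow}) and the contribution of deeper points (bounded by $N_n\cdot\psi(r_n)=o(|W_n|)$ since $r_n\to\infty$ and $N_n/|W_n|$ is bounded).

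\textbf{From expectation to almost sure, and uniformity.} To upgrade the in-expectation/in-probability control to almost sure convergence I would invoke the concentration inequalities for DPPs from \cite{Pemantle} (as the introduction announces): $N(W_n)$ and, more generally, the relevant linear statistics of $X$ concentrate around their means with Gaussian/sub-exponential tails, so that for any $\delta>0$, $\mathbb P(N(\partial W_n\oplus r_n)\geq \delta|W_n|)\leq \exp(-c\delta|W_n|)$ for $n$ large; combined with the Borel–Cantelli assumption \eqref{notslow}, $\sum_n\exp(-\delta|W_n|)<\infty$, this yields the almost sure bound. For the uniformity over $\theta$: the assumptions force all estimates ($\psi$, the lower bound $c$ on $\lambda_{\min}$, the operator-norm bounds) to be uniform in $\theta$ from the start, so no separate compactness/equicontinuity argument is strictly needed for the convergence itself — the continuity of $(\theta,x)\mapsto\hat K^\theta(x)$ and compactness of $\Theta$ are what guarantee the uniform lower bound $\inf_\theta\inf_{\|x\|\leq R}\hat K_0^\theta(x)>0$ on compact frequency sets, which feeds the $\lambda_{\min}$ bound. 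The main obstacle is the $L$-kernel leakage lemma: one must carefully organize the Neumann-series bookkeeping so that the geometric summation in $k$ survives even after extracting the boundary-distance decay, and check that the decay rate $\psi$ is genuinely independent of $n$ and $\theta$; everything else (the $\logdet$-to-trace-norm inequality, the concentration step, the boundary counting) is comparatively routine.
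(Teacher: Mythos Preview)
Your overall architecture is right and matches the paper: bound the log-determinant difference by a trace-type quantity involving $E_n(\theta)=L_0^\theta[X\cap W_n]-L^\theta_{[W_n]}[X\cap W_n]$ times $\lambda_{\min}^{-1}$, lower-bound $\lambda_{\min}$ uniformly via the Toeplitz/Rayleigh argument (your $\mathcal K^\theta_{W_n}\geq c\,\mathcal I_{W_n}$ is exactly what the paper uses, citing \cite{min}), then control the trace term by a boundary estimate and upgrade to almost-sure via the concentration inequality of \cite{Pemantle} together with \eqref{notslow}.

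The genuine difference is in how $E_n$ is controlled. You propose a ``leakage lemma'' obtained by bookkeeping the Neumann series \eqref{convol2} and extracting a pointwise bound $|E_n(x,y)|\leq\psi(\dist(x,\partial W_n))$. The paper instead proves a clean \emph{operator} inequality via a Schur complement: writing $\mathcal I-\mathcal K$ in block form on $L^2(W)\oplus L^2(W^c)$ and using that the $(1,1)$-block equals the inverse of the Schur complement of $(\mathcal I-\mathcal K)^{-1}$, one gets
\[
0\ \leq\ \mathcal P_W\mathcal L\mathcal P_W-\mathcal L_{[W]}\ \leq\ \mathcal N_W,\qquad N_W(x,y)=\int_{W^c}L(x,z)L(z,y)\,\der\nu(z),
\]
in one line. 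This buys three things you would otherwise have to fight for. First, $E_n\geq 0$ comes for free, so $\logdet(I+E_nM^{-1})$ is nonnegative and bounded above by $\tr(E_nM^{-1})$; you avoid the denominator $1-\|M^{-1}\|_{op}\|E\|_{op}$ and any separate control of $\|E_n\|_{op}$. Second, the upper bound is expressed directly in terms of $L_0^\theta$, with no need to resum the Neumann series or worry about the geometric summation surviving the boundary extraction. Third, only the diagonal $N_{W_n}^\theta(x,x)=\sum_{z\notin W_n}L_0^\theta(z-x)^2$ enters, so the final object is a \emph{single} sum $\sum_{x\in X\cap W_n}N^\theta_{W_n}(x,x)$, which is a $1$-Lipschitz linear statistic to which \cite{Pemantle} applies directly; its expectation vanishes by the translation-invariant averaging lemma used for the deterministic part. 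Note that $N^\theta_{W_n}(x,x)$ is morally your $\psi(\dist(x,\partial W_n))$, so the two routes converge quantitatively, but the Schur-complement route reaches it without the Neumann-series combinatorics and without the Frobenius/trace-norm detour over pairs $(x,y)$.

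Your approach can be pushed through, but the ``main obstacle'' you flag is precisely the part the paper sidesteps.
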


The only restrictive assumptions in Proposition \ref{approxLLCV} is the need for $K^\theta_0$ to decay faster than $\|x\|^{-d}$ and the fact that $\hat K^\theta_0$ never vanishes. In the usual setting where the kernels are parametric covariance functions (see Propositon \ref{intro:Fourier}), these assumptions are generally satisfied. That includes the Gaussian, Cauchy and Whittle-Matern kernels. The only exception amongst standard kernels is  the Bessel-type kernel, that will be examined by simulations in Section~\ref{sec bessel}.  Based on Propositions~\ref{DeterministicCV} and \ref{approxLLCV} and noticing that the assumptions of Proposition~\ref{approxLLCV} imply the assumptions of Proposition~\ref{DeterministicCV}, we thus obtain the consistency of the likelihood approximation  \eqref{approxLL}  when $\X=\Z^d$.

\begin{corr}\label{cor discrete} Let $\{K^\theta,\theta\in\Theta\}$ be a family of translation-invariant DPP kernels on $\Z^d$ satisfying the assumptions of Proposition~\ref{approxLLCV}, then $\sup_{\theta\in\Theta}|\tilde l(\theta|X)-l(\theta|X)|\cvps 0$ for all $\theta\in\Theta$. 
\end{corr}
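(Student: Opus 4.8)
The plan is to combine the two propositions additively. Recall from \eqref{LL} and \eqref{approxLL} that
\[
l(\theta|X)-\tilde l(\theta|X)=\frac{1}{|W_n|}\Big(\logdet(\mathcal{I}_{W_n}-\mathcal{K}^\theta_{W_n})-|W_n|\!\int_{\X^*}\!\log(1-\hat K^\theta_0(x))\der x\Big)+\frac{1}{|W_n|}\Big(\logdet(L^\theta_{[W_n]}[X\cap W_n])-\logdet(L_0^\theta[X\cap W_n])\Big).
\]
So it suffices to show each of the two bracketed terms, divided by $|W_n|$, converges to $0$ uniformly in $\theta$; the first almost surely (in fact deterministically) and the second almost surely. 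Then the triangle inequality finishes the job.

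First I would verify that the hypotheses of Corollary~\ref{cor discrete} (i.e. the hypotheses of Proposition~\ref{approxLLCV}) imply those of Proposition~\ref{DeterministicCV}, as the text already asserts. The decay bound $|K^\theta_0(x)|\leq A/(1+\|x\|^{d+\tau})$ gives at once that $x\mapsto\sup_\theta|K^\theta_0(x)|\leq A/(1+\|x\|^{d+\tau})$ is summable on $\Z^d$ (since $d+\tau>d$), hence each $K^\theta_0\in L^1(\Z^d,\nu)$ and $\sup_\theta K^\theta(0)=\sup_\theta|K^\theta_0(0)|\leq A<\infty$. On $\Z^d$ the Fourier transform $\hat K^\theta_0$ is a continuous function on the compact torus $\X^*=[0,1]^d$, hence automatically integrable, and by assumption it takes values in $(0,M]\subset[0,M]$ with $M<1$. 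Thus all the hypotheses of Proposition~\ref{DeterministicCV} hold, and we get
\[
\sup_{\theta\in\Theta}\Big|\tfrac{1}{|W_n|}\logdet(\mathcal{I}_{W_n}-\mathcal{K}^\theta_{W_n})-\int_{\X^*}\log(1-\hat K^\theta_0(x))\der x\Big|\cvn 0.
\]

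Next, Proposition~\ref{approxLLCV} directly gives, for the realization $X$ of the DPP with kernel $K^{\theta^*}$,
\[
\sup_{\theta\in\Theta}\tfrac{1}{|W_n|}\big|\logdet(L_0^\theta[X\cap W_n])-\logdet(L^\theta_{[W_n]}[X\cap W_n])\big|\cvps 0.
\]
Adding the two displays via the triangle inequality yields $\sup_{\theta\in\Theta}|l(\theta|X)-\tilde l(\theta|X)|\cvps 0$, which is exactly the claim. There is essentially no obstacle here: the corollary is a bookkeeping combination of the two propositions, and the only nontrivial point, the implication between hypothesis sets, is the routine verification in the previous paragraph (using $d+\tau>d$ for summability and compactness of the torus for integrability of $\hat K^\theta_0$). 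One should just be careful that the almost-sure null set on which convergence fails is the one coming from Proposition~\ref{approxLLCV}, the deterministic-part convergence contributing nothing probabilistic.
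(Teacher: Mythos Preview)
Your proposal is correct and matches the paper's approach exactly: the corollary is stated without a separate proof, the paper simply noting that it follows from Propositions~\ref{DeterministicCV} and~\ref{approxLLCV} once one checks that the hypotheses of the latter imply those of the former. Your verification of that implication (summability from the polynomial decay, integrability of $\hat K^\theta_0$ by continuity on the compact torus) is precisely the routine check the paper leaves implicit.
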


Getting the same result for DPPs on $\R^d$ is still an open problem. However the next proposition shows that a DPP on $\R^d$ can be approximated by a discrete DPP on an arbitrarily small regular grid of $\R^d$, for which Corollary~\ref{cor discrete} applies. Note that the assumptions on $\hat K_0$ below are satisfied for all standard parametric families, see Section~\ref{sec:ParFam}.
\begin{prop} \label{GridApprox2}
Let $X$ be a stationary DPP on $\R^d$ with kernel $K(x,y)=K_0(y-x)$, where $K_0$ is a square integrable function such that $\hat K_0$ takes values in $[0,1[$  and
$$\forall x\in\R^d,~~0\leq \hat K_0(x)\leq \frac{A}{1+\|x\|^{d+\tau}}$$
for some constant $A,\tau>0$. For all $\epsilon > 0$, define $X_\epsilon$ as the DPP on  $\Z^d$ with kernel $K_\epsilon(x,y)\defeq\epsilon^dK_0(\epsilon(y-x))$. Then, $X_\epsilon$ is well-defined for small enough $\epsilon$ and the distribution of $\epsilon X_\epsilon$, the DPP $X_\epsilon$ rescaled by a factor $\epsilon$, weakly converges to the distribution of $X$ when $\epsilon$ tends to $0$.
\end{prop}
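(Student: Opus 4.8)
The plan is to prove this weak convergence by checking that the void probabilities of $\epsilon X_\epsilon$ over finite unions of bounded rectangles converge to those of $X$; since $X$ is a simple point process on $\R^d$, this is a convergence-determining class (the joint void probabilities determine a simple point process and the associated criterion is standard, e.g. the point-process analogue of Kallenberg's theorem). Before that, one must justify that $X_\epsilon$ is well defined for small $\epsilon$. The eigenvalues of the integral operator $\mathcal{K}_\epsilon$ associated with $K_\epsilon$ on $L^2(\Z^d,\nu)$ are the values of the discrete Fourier transform $\xi\mapsto\sum_{x\in\Z^d}\epsilon^d K_0(\epsilon x)e^{-2i\pi\langle x,\xi\rangle}$, $\xi\in[0,1]^d$, and by Poisson summation these equal $\sum_{m\in\Z^d}\hat K_0((\xi+m)/\epsilon)$. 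They are nonnegative because $\hat K_0\ge 0$, and splitting this lattice sum into the single non-negligible term and a tail controlled by the decay $\hat K_0(x)\le A/(1+\|x\|^{d+\tau})$ shows
\[
\limsup_{\epsilon\to 0}\;\sup_{\xi\in[0,1]^d}\,\sum_{m\in\Z^d}\hat K_0\!\left(\tfrac{\xi+m}{\epsilon}\right)\;\le\;\sup_{x\in\R^d}\hat K_0(x)\;<\;1 .
\]
Hence $X_\epsilon$ is a well-defined DPP for $\epsilon$ small, and moreover $\|\mathcal{K}_\epsilon\|\le M'$ for some $M'<1$ and all small $\epsilon$; this uniform bound will be reused below.

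Next I would fix a finite union of bounded rectangles $B\subset\R^d$ and set $S_\epsilon\defeq\Z^d\cap\epsilon^{-1}B$, a finite set. Since $\epsilon X_\epsilon\cap B=\epsilon(X_\epsilon\cap\epsilon^{-1}B)$, applying Theorem~\ref{Likelihood_creation} at the empty configuration (valid because the eigenvalues of $\mathcal{K}_\epsilon$ lie in $[0,M']\subset[0,1[$) gives $\mathbb{P}(\epsilon X_\epsilon\cap B=\emptyset)=\det(\mathcal{I}_{S_\epsilon}-\mathcal{K}_\epsilon[S_\epsilon])$, a finite determinant. As $\|\mathcal{K}_\epsilon[S_\epsilon]\|\le M'<1$, the logarithm expands as in \eqref{convol1}:
\[
\log\mathbb{P}(\epsilon X_\epsilon\cap B=\emptyset)=-\sum_{k\ge 1}\frac{1}{k}\sum_{x\in S_\epsilon^k}K_\epsilon(x_1,x_2)\cdots K_\epsilon(x_{k-1},x_k)K_\epsilon(x_k,x_1).
\]
Because $\hat K_0\in L^1(\R^d)$, the function $K_0$ is continuous and bounded with $|K_0|\le K_0(0)$, so after the substitution $u_j=\epsilon x_j$ the $k$-th summand equals $\sum_{u\in(\epsilon\Z^d\cap B)^k}\epsilon^{dk}K_0(u_2-u_1)\cdots K_0(u_1-u_k)$, a Riemann sum converging as $\epsilon\to 0$ to $\int_{B^k}K_0(u_2-u_1)\cdots K_0(u_1-u_k)\,\der u_1\cdots\der u_k=\tr(\mathcal{K}_B^k)$, where $\mathcal{K}_B$ denotes the restriction to $L^2(B,\nu)$ of the operator with kernel $(x,y)\mapsto K_0(y-x)$. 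Each summand is bounded uniformly for small $\epsilon$ by $\tr(\mathcal{K}_\epsilon[S_\epsilon]^k)\le\|\mathcal{K}_\epsilon[S_\epsilon]\|^{k-1}\tr(\mathcal{K}_\epsilon[S_\epsilon])\le (M')^{k-1}\,\epsilon^d|S_\epsilon|\,K_0(0)\le (M')^{k-1}(|B|+1)K_0(0)$, which is summable against $1/k$. Dominated convergence in $k$ then yields $\log\mathbb{P}(\epsilon X_\epsilon\cap B=\emptyset)\to-\sum_{k\ge1}\frac1k\tr(\mathcal{K}_B^k)=\logdet(\mathcal{I}_B-\mathcal{K}_B)=\log\mathbb{P}(X\cap B=\emptyset)$, the last two equalities by \eqref{convol1} on $\R^d$ and Theorem~\ref{Likelihood_creation}. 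Since $B$ was arbitrary, the claimed weak convergence follows.

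The one genuinely delicate point is the uniform spectral estimate of the first paragraph: $\limsup_{\epsilon\to0}\|\mathcal{K}_\epsilon\|<1$, which underpins both the well-definedness of $X_\epsilon$ and the domination of the series above. It rests on the Poisson summation identity for $\hat K_{\epsilon,0}$ together with a careful estimate of $\sum_{m\in\Z^d}\hat K_0((\xi+m)/\epsilon)$ — one must check that, for $\epsilon$ small, at most one term of this sum is non-negligible (since distinct points of $\frac1\epsilon(\Z^d+\xi)$ are $\ge 1/\epsilon$ apart while $\hat K_0$ decays), and that the remaining terms sum to something $O(\epsilon^{d+\tau})+O(R^{-\tau})$ for a cutoff $R$; I would isolate this as a lemma in the appendix. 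Everything else — Riemann-sum convergence of continuous bounded integrands over rectangles, the termwise dominated convergence in $k$, and the reduction of weak convergence to void probabilities — is routine. As an alternative to the second and third paragraphs one could argue by the method of moments, verifying that the factorial moment measures of $\epsilon X_\epsilon$ (whose $n$-th one is $\sum\epsilon^{nd}\det(K_0[u])\,\delta_u$ over distinct $u\in(\epsilon\Z^d)^n$) converge weakly to those of $X$, another Riemann-sum computation, and invoking the moment-determinacy of DPPs, which holds since $\det(K_0[x])\le K_0(0)^n$; this route is of comparable difficulty.
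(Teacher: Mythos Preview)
Your proof is correct and its core mechanism coincides with the paper's: bound the operator norm of $\mathcal{K}_\epsilon$ via the decay of $\hat K_0$, then expand a Fredholm $\logdet$ as $-\sum_{k\ge 1}\frac{1}{k}\tr(\cdot)$, identify each term as a Riemann sum over $(\epsilon\Z^d\cap B)^k$, and pass to the limit by dominated convergence with the domination $(M')^{k-1}\cdot C$. The packaging differs in two places. First, for well-definedness the paper computes the quadratic form directly,
\[
\langle v,\epsilon^d K[\epsilon\Z^d]v\rangle=\int_{\R^d}\hat K_0(t/\epsilon)\,|\phi(t)|^2\,\der t\le\sum_{i\in\Z^d}\sup_{x\in C_i}\hat K_0(x/\epsilon),
\]
which is precisely your periodized bound $\sum_m\hat K_0((\xi+m)/\epsilon)$ obtained without invoking Poisson summation. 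This matters slightly: under the stated hypotheses $K_0$ is only known to be continuous and in $L^2$, so $\sum_{n}\epsilon^d|K_0(\epsilon n)|$ need not converge and the classical Poisson formula is not immediately available; the paper's quadratic-form route sidesteps this, and you can use it verbatim as the ``lemma in the appendix'' you allude to. Second, for weak convergence the paper uses Laplace functionals (citing \cite[Prop.~11.1.VIII]{DV2}) with the weighted kernel $K_f(x,y)=\sqrt{1-e^{-f(x)}}\,K_0(y-x)\,\sqrt{1-e^{-f(y)}}$, whereas you use void probabilities. Both reduce to the same Riemann-sum computation; the Laplace route is marginally cleaner because $K_f$ is continuous with compact support (so Riemann convergence is textbook) and because pointwise convergence of Laplace functionals is directly equivalent to weak convergence, while the void-probability criterion you cite also needs a tightness check such as $\sup_\epsilon\E[\epsilon X_\epsilon(B)]=\sup_\epsilon\epsilon^d|S_\epsilon|K_0(0)<\infty$, which is of course immediate here. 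Your alternative via factorial moment measures is equally valid.
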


In the end, Corollary~\ref{cor discrete} tells us that the asymptotic approximation of the log-likelihood \eqref{approxLL} is theoretically sounded for most classical parametric families of stationary DPPs on $\Z^d$ and, as a consequence of Proposition \ref{GridApprox2}, also theoretically sounded for any discrete approximation of continuous DPPs on an arbitrarily small regular grid of $\R^d$. 

\section{Application to standard parametric families}\label{sec: appli}

\subsection{Classical parametric families of stationary DPPs} \label{sec:ParFam}
A classical way of generating parametric families of stationary DPPs is the following result.
\begin{prop} \label{intro:Fourier}
Let $K_0:\X\mapsto\R$ be a bounded square integrable symmetric function on $\R^d$ such that its Fourier transform $\hat K_0$ takes values in $[0,1]$. Then, the function $K(x,y)\defeq K_0(y-x)$ is a DPP kernel on $(\X,\nu)$.
\end{prop}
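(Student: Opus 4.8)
The plan is to check that $K(x,y)=K_0(y-x)$ fulfils the hypotheses of Definition~\ref{Def2}: that it is a locally square integrable Hermitian function whose integral operator $\mathcal{K}$ is locally of trace class with all eigenvalues in $[0,1]$. Once this is done, the existence of a point process with joint intensities $\det K[x]$ — that is, the fact that $K$ \emph{is} a DPP kernel — follows from the Macchi--Soshnikov existence theorem, invoked here through Theorem~\ref{Likelihood_creation} (see also \cite{Hough}). The Hermitian property is immediate: $K_0$ being real and symmetric, $K(y,x)=K_0(x-y)=K_0(y-x)=\overline{K(x,y)}$. Local square integrability follows from $K_0\in L^2(\X,\nu)$: for every compact $W\subset\X$, using translation invariance of $\nu$,
\[
\int_W\!\int_W|K_0(y-x)|^2\,\der\nu(x)\der\nu(y)\leq|W|\,\|K_0\|_{L^2(\X,\nu)}^2<\infty .
\]

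For the spectral condition I would diagonalize $\mathcal{K}$ by a Fourier transform: when $\X=\R^d$, the Fourier--Plancherel transform on $L^2(\R^d)$ conjugates the convolution operator $\mathcal{K}$ to multiplication by $\hat K_0$, and when $\X=\Z^d$, the discrete Fourier transform $\ell^2(\Z^d)\to L^2([0,1]^d)$ does the same. Since $\hat K_0$ is real valued with values in $[0,1]$, $\mathcal{K}$ is a self-adjoint positive contraction, so $0\leq\mathcal{K}\leq\mathcal{I}$. Compressing to $L^2(W,\nu)$ by the orthogonal projection $P_W$ onto that subspace (equivalently, multiplication by $\cara{W}$) gives $0\leq\mathcal{K}_W\leq\mathcal{I}_W$, whence all eigenvalues of $\mathcal{K}_W$ lie in $[0,1]$.

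It remains to establish the local trace-class property, which I expect to be the only genuinely delicate point. For $\X=\Z^d$ it is trivial, since $\mathcal{K}_W$ is then a finite symmetric matrix. For $\X=\R^d$ the key step is to promote the boundedness of $K_0$ to the integrability of $\hat K_0$: taking a nonnegative Gaussian approximate identity $(\phi_\epsilon)_{\epsilon>0}$ (with $\phi_\epsilon\geq0$, $\int\phi_\epsilon=1$, $\widehat{\phi_\epsilon}\uparrow1$ pointwise as $\epsilon\downarrow0$), monotone convergence gives $\int\hat K_0\,\widehat{\phi_\epsilon}\,\der\xi\uparrow\int\hat K_0\,\der\xi$, while a Parseval/Fourier multiplication identity rewrites the left-hand side as the convolution of $K_0$ with a nonnegative $L^1$ function of mass $1$ evaluated at $0$, hence as a quantity bounded by $\|K_0\|_\infty$. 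Thus $\int\hat K_0\leq\|K_0\|_\infty<\infty$, so $K_0$ coincides almost everywhere with the continuous function $x\mapsto\int\hat K_0(\xi)e^{2i\pi\langle x,\xi\rangle}\der\xi$. Consequently, for any compact $W$ the kernel $(x,y)\mapsto K_0(y-x)$ is continuous and positive semidefinite on $W\times W$, so Mercer's theorem applies and $\mathcal{K}_W$ is trace class with $\tr(\mathcal{K}_W)=K_0(0)\,|W|<\infty$. (Equivalently, one may write $\mathcal{K}=\mathcal{K}^{1/2}\mathcal{K}^{1/2}$ with $\mathcal{K}^{1/2}$ the convolution operator of Fourier multiplier $\sqrt{\hat K_0}\in L^2(\R^d)$, and note that $\mathcal{K}^{1/2}P_W$ is Hilbert--Schmidt with $\|\mathcal{K}^{1/2}P_W\|_{\mathrm{HS}}^2=|W|\int\hat K_0<\infty$, so that $\mathcal{K}_W$ is the product of a Hilbert--Schmidt operator and its adjoint.)

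In summary, the whole argument is bookkeeping around the Fourier diagonalization of $\mathcal{K}$, except for the passage from ``$K_0$ bounded'' to ``$\mathcal{K}_W$ trace class'' on $\R^d$ — equivalently, to $\hat K_0\in L^1$, i.e.\ continuity of $K_0$ — which is where I would concentrate the effort.
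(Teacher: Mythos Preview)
The paper does not give its own proof of this proposition; it merely cites \cite{Lavancier} for the case $\X=\R^d$. Your argument is correct and complete: Fourier diagonalization yields $0\leq\mathcal{K}\leq\mathcal{I}$, and your derivation of $\hat K_0\in L^1$ from the boundedness of $K_0$ together with the nonnegativity of $\hat K_0$ (via a Gaussian approximate identity and Plancherel) is exactly the step needed to obtain continuity of $K_0$ and hence the local trace-class property through Mercer or, equivalently, the Hilbert--Schmidt factorization $\mathcal{K}_W=(\mathcal{K}^{1/2}P_W)^*(\mathcal{K}^{1/2}P_W)$. One minor inaccuracy: Theorem~\ref{Likelihood_creation} gives the \emph{density} of a DPP, not its existence, so it is not the right pointer for the Macchi--Soshnikov existence theorem; but in the paper's framework (Definition~\ref{Def2}) ``$K$ is a DPP kernel'' amounts precisely to the operator conditions you have verified, so the existence step is in any case superfluous here.
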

This proposition is proved in \cite{Lavancier} in the case $\X=\R^d$. 
Since symmetric functions $K_0$ with non negative Fourier transform are covariance functions, this result implies that we can consider as many parametric families of DPPs as there are parametric families of covariance functions. The assumption that $\hat K_0\leq 1$ simply adds a bound on the parameters of the family. Various examples are presented and  studied in \cite{Biscio, Lavancier}. We provide in Table~\ref{parametric models} some examples in $\R^d$. Note that for simplification, we call in this table Bessel kernel the particular case of the Bessel kernel in \cite{Biscio} where the shape parameter is $\sigma=0$, and Cauchy kernel the particular case in \cite{Lavancier} where the shape parameter is $1/2$. If the shape parameter is different for these models, then closed formulas are available for $K_0$ and $\hat K_0$, but not for $L_0$ (see the next section and Table~\ref{L0 models}). 

\begin{table}
\begin{equation*}
\begin{array}{|l|c|c|c|}
\hline
\rule[0pt]{0pt}{15pt}  & K_0(x) & \hat K_0(x) & \rho_{\max}\\\hline
\rule[-10pt]{0pt}{30pt} \text{Gauss} & \rho\exp\left(-\frac{\|x\|^2}{\alpha^2}\right) &  \rho (\sqrt \pi \alpha)^d \exp(-\|\pi\alpha x\|^2) & (\sqrt{\pi}\alpha)^{-d}\\ \hline
\rule[-15pt]{0pt}{35pt}  \text{Bessel} &  \rho 2^{d/2}\Gamma(d/2+1)\frac{J_{d/2}(\sqrt{2d}\|y-x\|/\alpha)}{(\sqrt{2d}\|y-x\|/\alpha)^{d/2}} & \frac{\rho}{\rho_{\max}}\cara{\|x\|\leq\sqrt{d/(2\pi^2\alpha^2)}} & \frac{d^{d/2}}{(2\pi)^{d/2}\alpha^d\Gamma(d/2+1)} \\ \hline
\rule[-10pt]{0pt}{31pt}  \text{Cauchy} & \rho \left(1+\left\|\frac{x}{\alpha}\right\|^2\right)^{-\frac{d+1}{2}} & \frac{\rho(\sqrt\pi\alpha)^d\sqrt{\pi}}{\Gamma((d+1)/2)}e^{-\|2\pi\alpha x\|} & \frac{\Gamma((d+1)/2)}{\pi^{(d+1)/2}\alpha^d} \\\hline
\rule[-15pt]{0pt}{35pt} \text{WM} & \rho\frac{2^{1-\sigma}}{\Gamma(\sigma)}\left\|\frac{x}{\alpha}\right\|^\sigma K_\sigma\left(\left\|\frac{x}{\alpha}\right\|\right) & \rho\frac{\Gamma(\sigma+d/2)}{\Gamma(\sigma)} \frac{(2\sqrt\pi \alpha)^d}{(1+\|2\pi\alpha x\|^2)^{\sigma+d/2}} & \frac{\Gamma(\sigma)}{\Gamma(\sigma+d/2)(2\sqrt{\pi}\alpha)^d} \\\hline
\end{array}
\end{equation*}
\caption{Examples of parametric kernels $K_0$ on $\R^d$, along with their Fourier transform $\hat K_0$. For each family, the intensity is $\rho$ and the range parameter is $\alpha$. The existence condition $\hat\K_0\leq 1$ is equivalent to $\rho\leq\rho_{\max}$ where $\rho_{\max}$ is given in the last column. The Whittle-Mat\'ern model (WM) also contains a shape parameter $\sigma>0$. Here $J_{d/2}$ denotes the Bessel function of the first kind and $K_\sigma$ the modified Bessel function of  the second kind.}
\label{parametric models}
\end{table}

\subsection{Expressions of \texorpdfstring{$L_0$}{L0}}\label{sec:L0}

When computing the approximate log-likelihood $\tilde l(\theta|X)$ in \eqref{approxLL} or its edge-corrected version \eqref{approxLLtorus}, one has to compute $L_0(y-x)$ for each pair of points $(x,y)\in (X\cap W)^2$. It is thus important to find faster ways to compute values of $L_0$ than the $d$-dimensional integral \eqref{app1}. 
An important example arises when $K_0$ is a radial function, denoted by $K_{\textrm{\rad}}$. In this case, the corresponding DPP is isotropic and $L_0$ is also a radial function, denoted by $L_{\textrm{\rad}}$. The Fourier transform can then be expressed by a Hankel transform which gives
$$\hat K_{\textrm{rad}}(r)=\frac{2\pi}{r^{d/2-1}}\int_0^{\infty}s^{d/2}K_{\textrm{rad}}(s)J_{d/2-1}(2\pi sr)\der s$$
and
$$L_{\textrm{rad}}(r)=\frac{2\pi}{r^{d/2-1}}\int_0^{\infty}s^{d/2}\frac{\hat K_{\textrm{rad}}(s)}{1-\hat K_{\textrm{rad}}(s)}J_{d/2-1}(2\pi sr)\der s.$$
The expression of $L_0$ therefore simplifies into a unidimensional integral.

Moreover, we may exploit the relation $\hat L_0=\hat K_0/(1-\hat K_0)=\sum_{n\geq 1}(\hat K_0)^n$ and try to compute the inverse Fourier transform to express $L_0$ as a series with exponentially decreasing coefficients (see the discussion in Section~\ref{sec:periodic_correction}) or even get  an analytic expression.  This strategy leads to closed-form formulas of $L_0$ for the classical parametric families displayed in Table~\ref{parametric models}. The results, obtained after straightforward calculus, are given in Table~\ref{L0 models}.

\begin{table}
\begin{equation*}
\begin{array}{|l|*1{>{\displaystyle}c|}}
\hline
\rule[0pt]{0pt}{15pt}  & L_0(x) \\\hline
\rule[-20pt]{0pt}{50pt} \text{Gauss} & \sum_{n\geq 1}\rho^n\frac{(\sqrt{\pi}\alpha)^{d(n-1)}}{n^{d/2}}\exp\left(-\frac{\|x\|^2}{n\alpha^2}\right) \\ \hline
\rule[-25pt]{0pt}{50pt} \text{Bessel} & \frac{\rho 2^{d/2}\Gamma(d/2+1)}{1-\rho\frac{(2\pi)^{d/2}\alpha^d\Gamma(d/2+1)}{d^{d/2}}}\frac{J_{d/2}(\sqrt{2d}\|x\|/\alpha)}{(\sqrt{2d}\|x\|/\alpha)^{d/2}} \\ \hline
\rule[-20pt]{0pt}{50pt} \text{Cauchy} & \sum_{n\geq 1}\frac{\rho^n}{n^d}\left(\frac{\pi^{(d+1)/2}\alpha^d}{\Gamma((d+1)/2)}\right)^{n-1}\left(1+\left\|\frac{x}{n\alpha}\right\|^2\right)^{-(d+1)/2} \\\hline
\rule[-20pt]{0pt}{50pt}\text{WM} & \sum_{n\geq 1}\frac{\rho^n(\sqrt{\pi}\alpha)^{d(n-1)}\Gamma(\sigma+d/2)^n}{2^{n\sigma-1-(n-1)d/2}\Gamma(\sigma)^n\Gamma(n\sigma+nd/2)}\left\|\frac{x}{\alpha}\right\|^{n\sigma+(n-1)d/2}\hspace{-1cm}K_{n\sigma+(n-1)d/2}\left(\left\|\frac{x}{\alpha}\right\|\right) \\\hline
\end{array}
\end{equation*}
\caption{Expression of $L_0$ defined in \eqref{app1} for the parametric kernels given in Table~\ref{parametric models}. }
\label{L0 models}
\end{table}

\subsection{Estimation of the intensity by MLE}\label{sec:Estrho}

Assume that the parametric DPP kernel reads for some parameters $\rho$ and $\theta$
\begin{equation}\label{Estrho}
K^{\rho,\theta}(x,y)=\rho\tilde K^{\theta}(x,y)
\end{equation}
where $\tilde K^{\theta}(x,x)=1$ for all $x$. The parameter $\rho$ corresponds here to the intensity of the DPP and $\theta$ to the other parameters of the model. This is the setting of all standard parametric models, including those presented in Table~\ref{parametric models}. 

When jointly estimating $(\rho,\theta)$ from a realization of the DPP $X$ on $W$ by the approximate MLE, simulations usually show that the estimate of $\rho$ appears to be very close to $N(W)/|W|$. One explanation given in \cite{Lavancier} is that, by doing a first order convolution approximation in \eqref{convol1} and \eqref{convol2}, we get
$$l(\rho,\theta | X)\approx 1-\rho+\log(\rho)\frac{N(W)}{|W|}+\frac1 {|W|}\logdet(\tilde K_{W}^\theta[X\cap W])$$
and the maximum point of this approximation is $\hat\rho=N(W)/|W|$. We even show in Proposition \ref{rhocool} that, in the case of Bessel type DPP kernels with parameters $(\rho,\alpha)$ as presented in Table~\ref{parametric models}, $\hat\rho=N(W)/|W|$ is always the maximum point of $\rho\mapsto\tilde l(\rho,\alpha|X)$ for any $\alpha$. This result suggests that, instead of jointly estimating $\rho$ and $\theta$, it is more computationally efficient to directly estimate $\rho$ by $\hat \rho=N(W)/|W|$ and then $\theta$ by an argument of the maximum of $\theta\mapsto\tilde l(\hat\rho,\theta|X)$.

\subsection{Estimation of the MLE standard errors}\label{sec:sd}

For most statistical models, the MLE is expected to have an asymptotic variance equal to the inverse Fisher information matrix. Even if this property is not theoretically proved for DPPs' models, it is a natural conjecture to make. To estimate this  variance, it is common (and even advocated in \cite{Efron_fisher}) to use the observed information,  which is the matrix with entries $-|W|\partial_{\theta_i}\partial_{\theta_j}  l(\theta|X)$, whose expectation defines the genuine Fisher information matrix. 

Since our approximation  \eqref{approxLL} is generally smooth in the parameters (see below), we may consider the following  approximation of the observed information matrix:
$$\tilde I(\theta)\defeq -|W|\left(\partial_{\theta_i}\partial_{\theta_j} \tilde l(\theta|X)\right)_{1\leq i,j\leq p}.$$
If $\hat\theta$ is the approximated MLE based on  \eqref{approxLL}, we can thus estimate its variance  by  
$\tilde I(\hat\theta)^{-1}$.

This estimation is possible whenever $\theta\mapsto K^\theta_0(x)$ and $\theta\mapsto L^\theta_0(x)$ are twice differentiable on $\Theta\subset\R^p$ for all $x\in\X$. Then so is $\theta\mapsto \tilde l(\theta|X)$ and we obtain 
\begin{multline}\label{eq:2nd_derivative_LL}
\partial_{\theta_i}\partial_{\theta_j} \tilde l(\theta|X) = \int_{\X^*}\frac{-(\partial_{\theta_i}\partial_{\theta_j} \hat{K}^\theta_0(x))(1-\hat K^\theta_0(x))-\partial_{\theta_i} \hat{K}^\theta_0(x)\partial_{\theta_j}\hat{K}^\theta_0(x)}{(1-\hat K^\theta_0(x))^2}\der x  \\
+\frac{1}{|W|}\tr\left((\partial_{\theta_i}\partial_{\theta_j} L^\theta_0) (L^\theta_0)^{-1}-(\partial_{\theta_i} L^\theta_0)  (L^\theta_0)^{-1}(\partial_{\theta_j} L^\theta_0) (L^\theta_0)^{-1}\right),
\end{multline}
where we have written $L^\theta_0$ for $L^\theta_0[X\cap W]$. 
Each derivative in this expression can easily be deduced from Tables~\ref{parametric models} and \ref{L0 models} for the parametric models discussed before.

Note that such variance estimation is not possible for the Fourier series approximation \eqref{eq:Fourier_approx} because this approximation is not differentiable in general, as illustrated in Figure~\ref{ComparingSmoothness} for the scale parameter of the Gaussian kernel, so that the observed information is not well defined in this case. 
Moreover, concerning the alternative minimum contrast estimators of a parametric DPP model considered in \cite{Lavancier,biscio:lavancier:17}, no tractable formulas are available for their asymptotic variance. For these estimation methods, the only way to approximate the associated standard errors is parametric bootstrap, a very time consuming procedure.

\section{Simulation study} \label{sec:Simu}

In this section we perform a simulation study to investigate the performance of our approximate MLE, with and without edge effect correction, and compare it to minimum contrast estimators (MCE for short) based on Ripley's $K$ function and on the pair correlation function (pcf for short), both being common second-order moment estimators used in spatial statistics. We refer to \cite{biscio:lavancier:17} for more detailed information on these MCEs applied to DPPs. At the exception of the special case of Bessel-type DPPs considered in Section~\ref{sec bessel}, we chose not to compare our estimators to the Fourier approximation \eqref{eq:Fourier_approx} of \cite{Lavancier} since, as explained in Section~\ref{sec:periodic_correction}, this estimator yields nearly the same results as our corrected MLE, which we confirmed in our testings, with the notable difference of the Fourier approximation being about ten times longer to compute in our examples.

\subsection{Whittle-Mat\'ern, Cauchy and Gaussian-type DPPs}\label{sec:Simu_Pas_Bessel}

We consider in this section the parametric models in Table~\ref{parametric models} that are covered by our theoretical assumptions in Section~\ref{section theory}, that are the Whittle-Mat\'ern, Cauchy and Gaussian-type DPPs. From this perspective, these are favourable models  for our likelihood approximation approach. All these models are of the form \eqref{Estrho}, then following Section \ref{sec:Estrho},  we estimate $\rho$ by $\hat\rho=N(W)/|W|$ for all methods, and  the performances are evaluated on the estimation of $\alpha$ only. Note that for  the Whittle-Mat\'ern model, we do not consider   the estimation of the shape parameter $\sigma$, which was assumed to be known. The joint estimation of $(\alpha,\sigma)$ for this model is known to be a poorly identifiable problem and it is customary to choose the best $\sigma$ from a small finite grid by profile likelihood  (see \cite{Lavancier}).  
For the estimation of $\alpha$, we have performed the same kind of simulations for the three models in $\R^2$. The results and conclusions are similar. In the following we only present the details for the Gaussian-type DPP but the code used to produce all results is provided as supplementary material and at \url{https://github.com/APoinas/MLEDPP}.

\begin{figure}
\centering
\begin{tabular}{ccc}
   \includegraphics[width=0.3\textwidth]{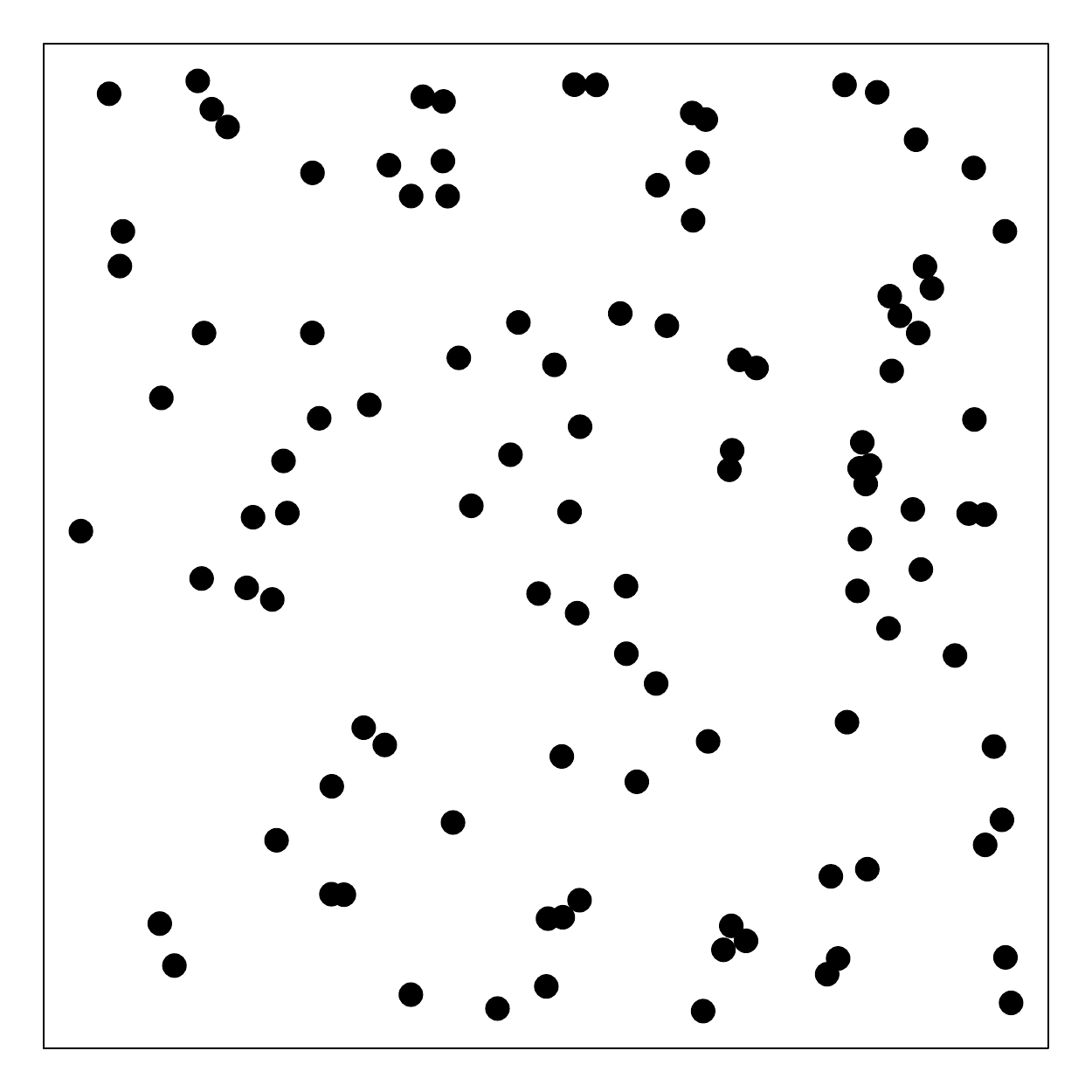} &
      \includegraphics[width=0.3\textwidth]{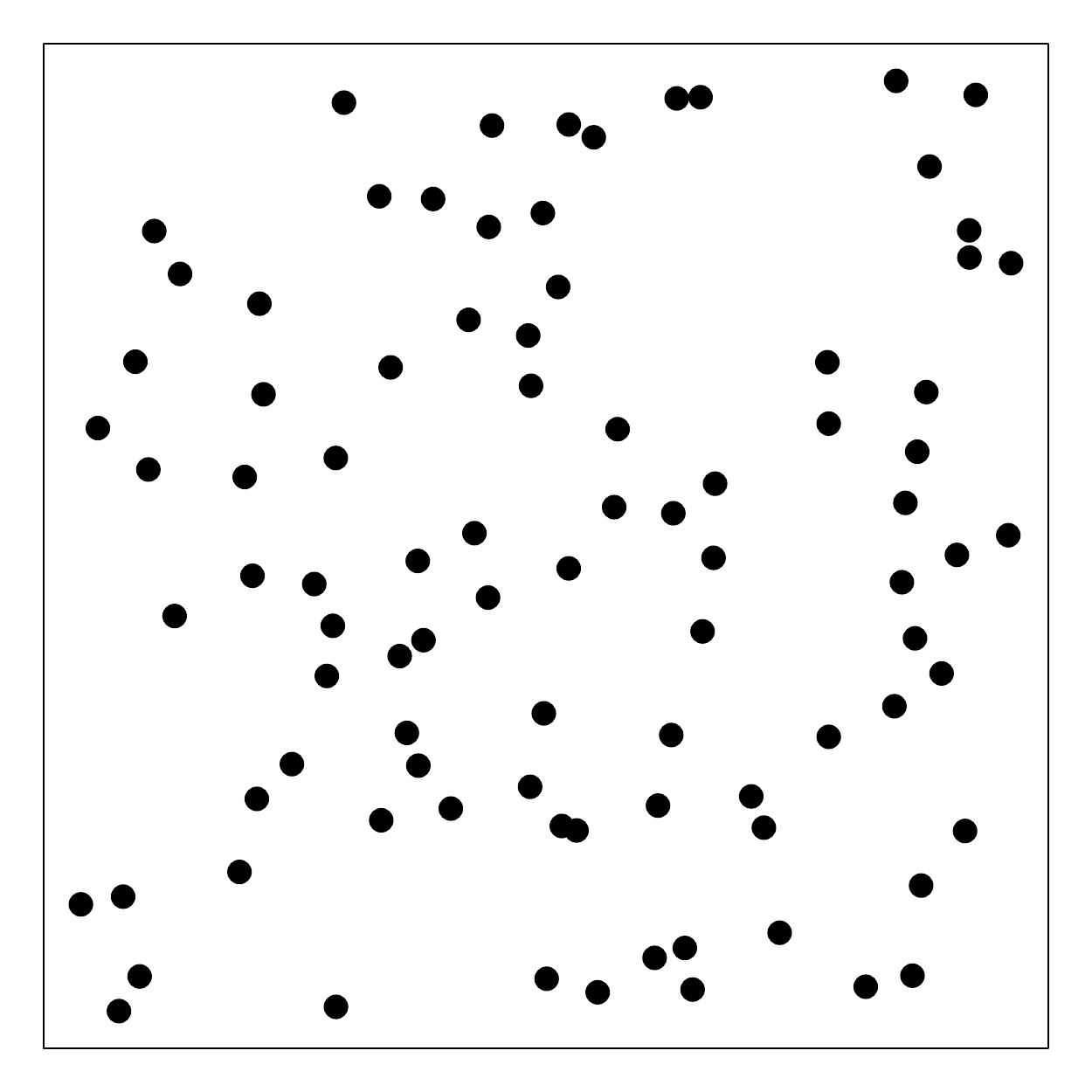} &
   \includegraphics[width=0.3\textwidth]{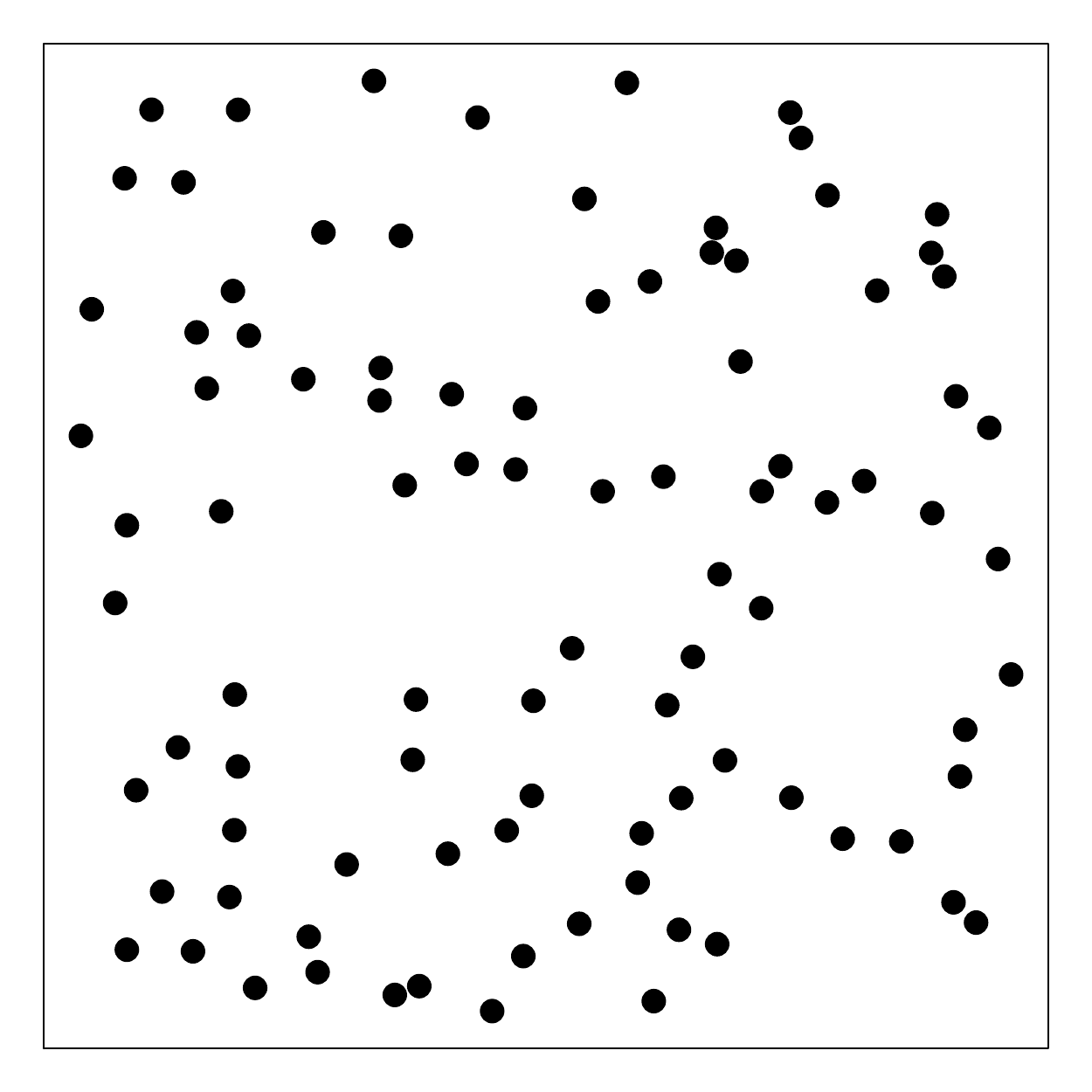} 
\end{tabular}
	   \caption{\label{ExampleReal} Examples of realizations of Gaussian-type DPPs on $[0,1]^2$ with parameters $\rho^*=100$ and $\alpha^*=0.01,0.03,0.05$ corresponding to three different degrees of repulsiveness.}
\end{figure}

We consider realizations of the Gaussian-type DPP with true parameters $\rho^*=100$ and $\alpha^*\in\{0.01,0.03,0.05\}$, when the observation window $W$ is either $[0,1]^2$, $[0,2]^2$ or $[0,3]^2$. When $\rho^*=100$, $\alpha$ can take values in $]0,(10\sqrt{\pi})^{-1}\approx 0.056[$ since the process exists if and only if  $\pi\rho\alpha^2\leq 1$. Therefore, $\alpha^*=0.01$ corresponds to a weakly repulsive point process, close to a Poisson point process, while $\alpha^*=0.03$ corresponds to a mildly repulsive DPP and $\alpha^*=0.05$ corresponds to a strongly repulsive DPP. Examples of realizations are shown in Figure \ref{ExampleReal}. We estimate $\alpha^*$ by the approximate MLE defined in \eqref{approxLL} and compare it to its edge-corrected version defined in \eqref{approxLLtorus} as well as MCEs based on the pcf or Ripley's $K$ function. As mentioned before, $\rho$ is replaced by $\hat\rho=N(W)/|W|$ in \eqref{approxLL} and \eqref{approxLLtorus}. Moreover we truncate the series defining $L_0$ (see Table~\ref{L0 models}) to the minimal value of $n\leq 50$ such that all remainder terms in the series become less than $10^{-4}$ times the first term. This choice leads to $n\leq 10$ for most values of $\alpha$ and to $n=50$ only for $\alpha > 0.9 \alpha_{\max}$, where $\alpha_{\max}=1/\sqrt{\pi\rho}$. All realizations have been generated in R \cite{R} using the \textit{spatstat} \cite{baddeley:rubak:turner:15} package and both MCEs were computed by the function \textit{dppm} of the same package. The tuning parameters for these MCEs were $r_{\min}=0.01$, $r_{\max}$ being one quarter of the side length of the window and $q=0.5$ as recommended in \cite{Diggle}. Boxplots of the difference between the four considered estimators and the true value $\alpha^*$ for $500$ runs in all different cases are displayed in Figure \ref{Boxplots} and the  corresponding mean square errors are given in Table \ref{MSEtable}.

\begin{figure}
\centering
{\small
\begin{tabular}{b{1.2cm}m{0.5cm}*4{>{\centering}p{0.17\textwidth}}}
 \raisebox{2cm}{$\alpha^*=0.01$} & \multicolumn{5}{l}{ \includegraphics[width=0.85\textwidth,height=0.2\textheight]{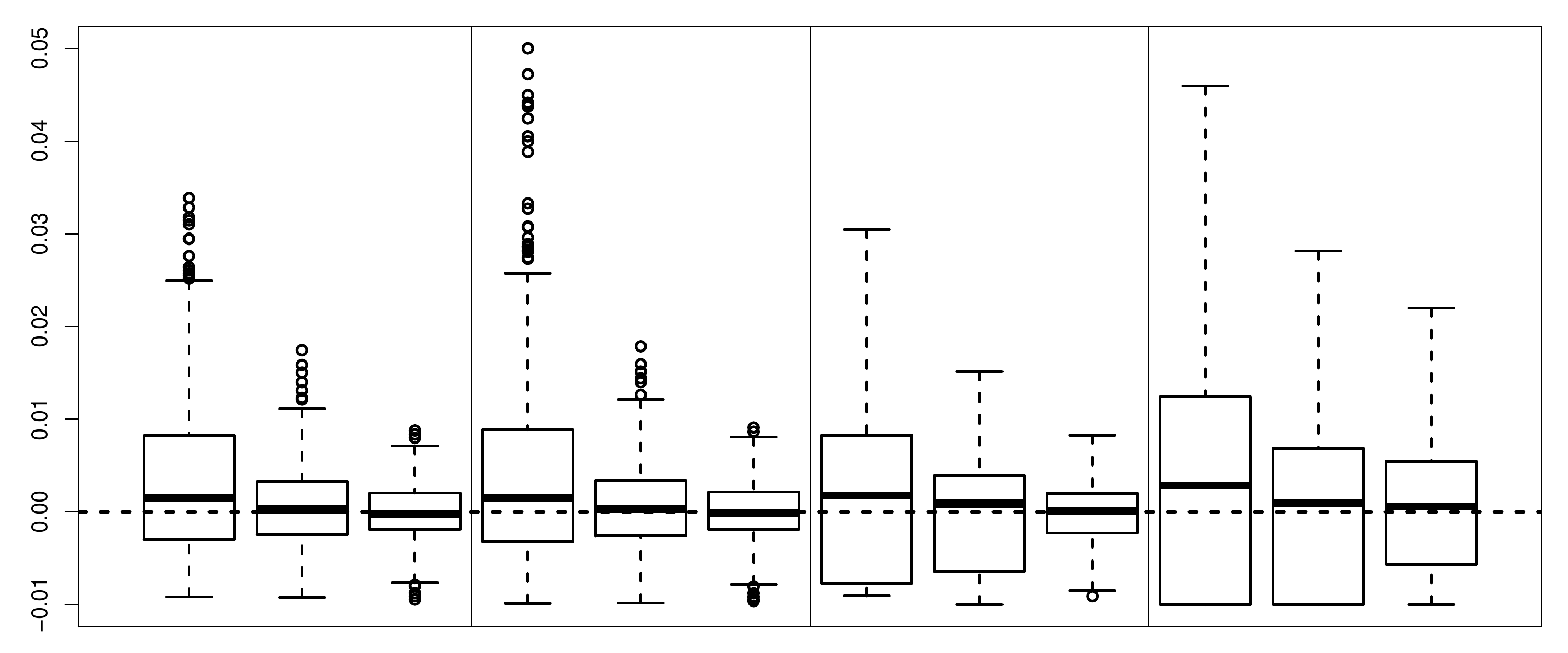}
}\\
 \raisebox{2cm}{$\alpha^*=0.03$} & \multicolumn{5}{l}{ \includegraphics[width=0.85\textwidth,height=0.2\textheight]{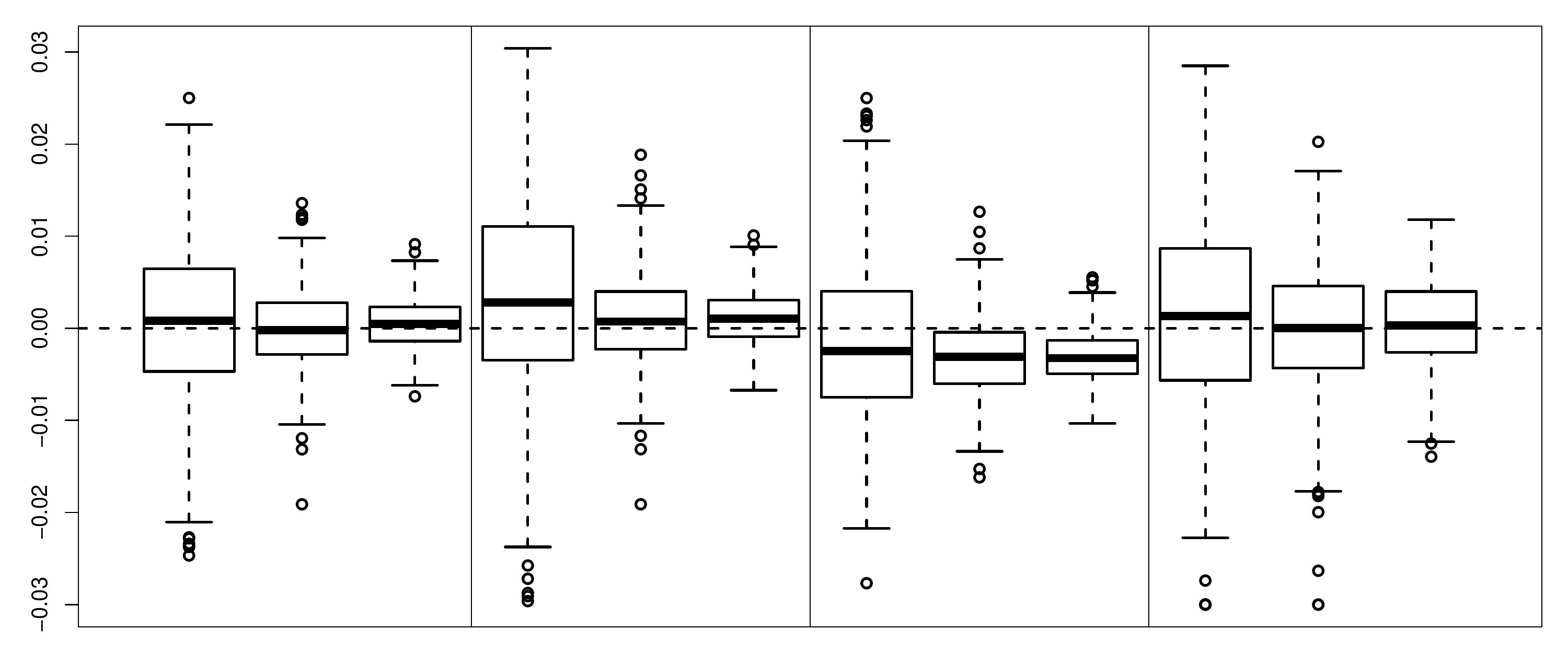}
}\\
 \raisebox{2cm}{$\alpha^*=0.05$} & \multicolumn{5}{l}{ \includegraphics[width=0.85\textwidth,height=0.2\textheight]{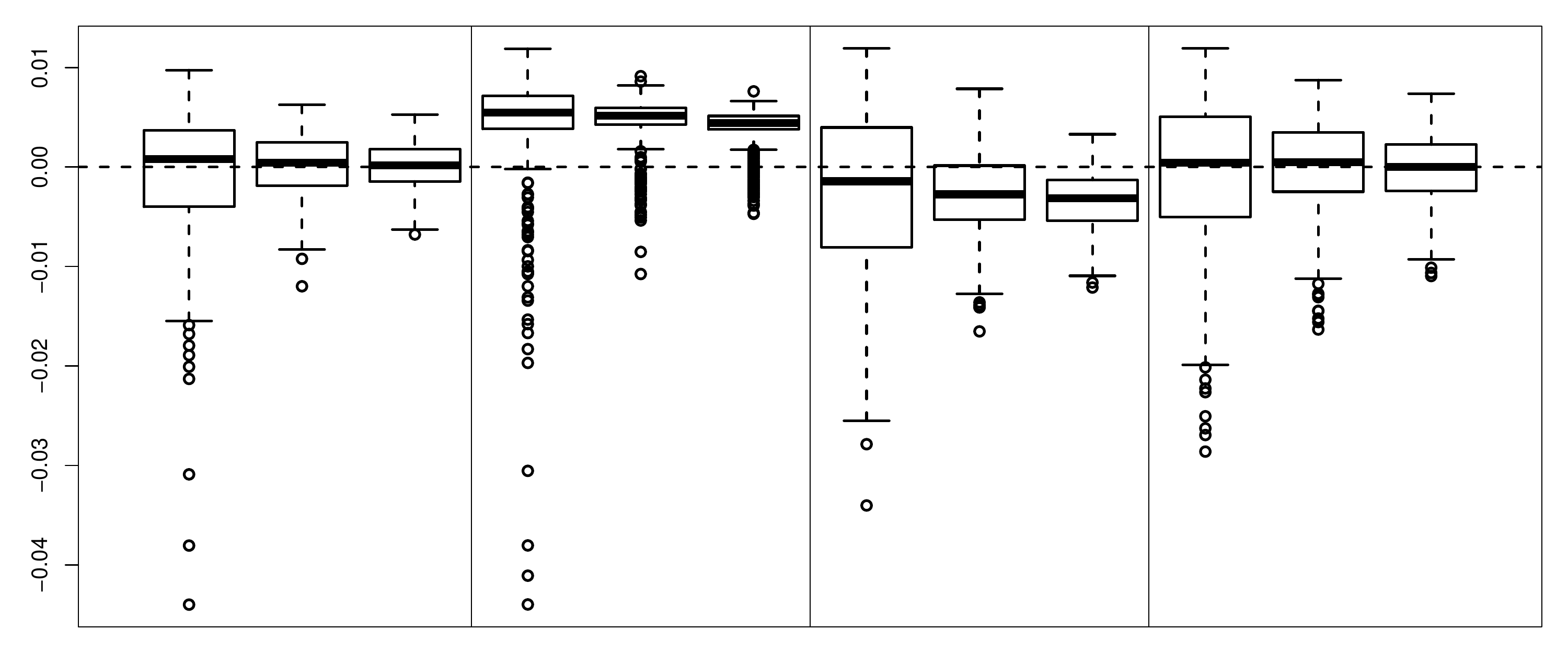}
}\\
& & $\tilde l^{~\T}(\theta|X)$ & $\tilde l(\theta|X)$ & pcf & Ripley 
\end{tabular}
}
 \caption{\label{Boxplots} Boxplots of $\hat\alpha-\alpha^*$ generated from $500$ simulations of Gaussian-type DPPs with true parameters $\rho^*=100$ and, from top to bottom, $\alpha^*=0.01$, $0.03$ and $0.05$. Each row shows the behaviour of  the following 4 estimators when the simulation window is, from left to right in each box, $W=[0,1]^2$, $[0,2]^2$ and $[0,3]^2$:  the approximate MLE with edge-corrections based on $\tilde l^{~\T}(\hat\rho,\alpha|X)$, the approximate MLE based on $\tilde l(\hat\rho,\alpha|X)$, the MCE based on the pair correlation function and the MCE based on the Ripley's $K$ function.}
	 \end{figure}

\begin{table}[h]
\centering
{\renewcommand{\arraystretch}{1.25}
\begin{tabular}{|c|c|c|c|c|c|c|c|c|c|}
\hline
Window & \multicolumn{3}{c|}{$[0,1]^2$} & \multicolumn{3}{c|}{$[0,2]^2$} & \multicolumn{3}{c|}{$[0,3]^2$} \\
\hline
$\alpha^*$ & 0.01 & 0.03 & 0.05 & 0.01 & 0.03 & 0.05 & 0.01 & 0.03 & 0.05 \\
\hline
MLE based on $\tilde l^{~\T}$ & \textbf{0.83} & 0.81 & \textbf{0.41} & \textbf{0.21} & \textbf{0.18} & \textbf{0.088} & \textbf{0.090} & \textbf{0.079} & \textbf{0.051}\\
MLE based on $\tilde l$ & 1.25 & 1.75 & 0.54 & 0.24 & 0.23 & 0.28 & 0.095 & 0.10 & 0.20 \\
MCE (pcf) & 0.86 & \textbf{0.77} & 0.74 & 0.31 & 0.27 & 0.23 & 0.17 & 0.17 & 0.19\\
MCE ($K$) & 1.81 & 1.17 & 0.51 & 0.74 & 0.46 & 0.21 & 0.48 & 0.23 & 0.12\\
\hline
\end{tabular}}
\caption{{ Estimated mean square errors (x$10^4$) of $\hat\alpha$ for Gaussian-type DPPs on different windows and with different values of $\alpha$, each computed from $500$ simulations. 
}} \label{MSEtable} \end{table}

From these results, we remark that when $\alpha^*=0.01$ and $\alpha^*=0.03$, inference based on the approximate likelihood $\tilde l(\hat\rho,\alpha|X)$ outperforms moment based inference for windows bigger than $[0,2]^2$. This is expected from maximum likelihood based inference and shows that hundreds of points are enough for $\tilde l(\hat\rho,\alpha|X)$ to be a good  approximation of the true likelihood when the underlying DPP is not too repulsive. 
When $\alpha^*=0.05$, that is when the negative dependence of the DPP is very strong, then $\tilde l(\hat\rho,\alpha|X)$ suffers from edge effects and is heavily biased. In fact, as can be seen in Figure \ref{CorrectionPlots}, $\tilde l(\hat\rho,\alpha|X)$ is an increasing function of $\alpha$ in this case and the estimate is often the highest possible value for $\alpha$, which is $1/\sqrt{\pi\hat\rho}$.  The correction $\tilde l^{~\T}$ introduced in \eqref{approxLLtorus} gives more accurate values of the likelihood for high values of $\alpha$, as shown in Figure \ref{CorrectionPlots}. Finally this estimator outperforms the other ones in nearly every cases and especially the most repulsive ones. 

\begin{figure}
\centering
\begin{tabular}{ccc}
   \includegraphics[width=0.3\textwidth]{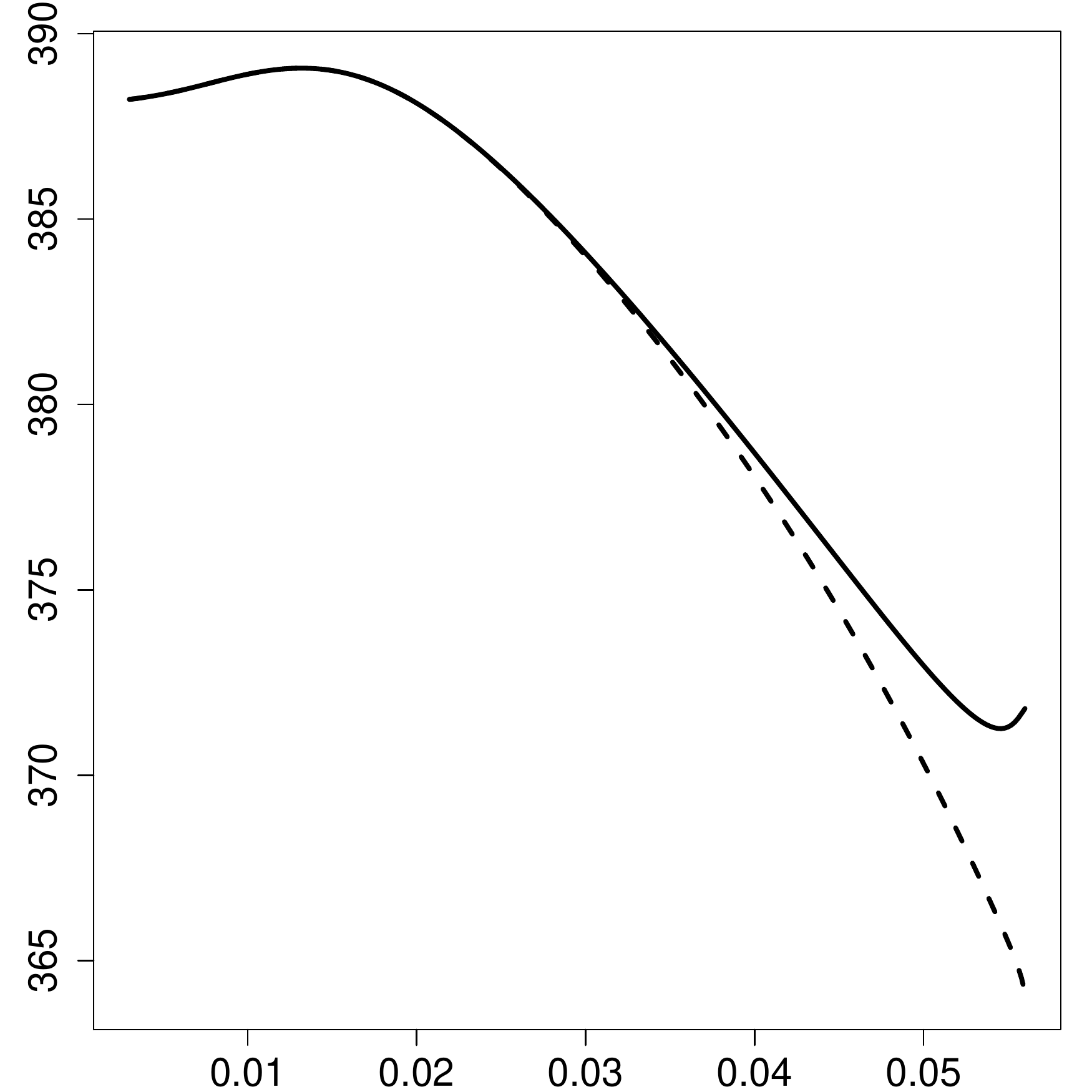} &
      \includegraphics[width=0.3\textwidth]{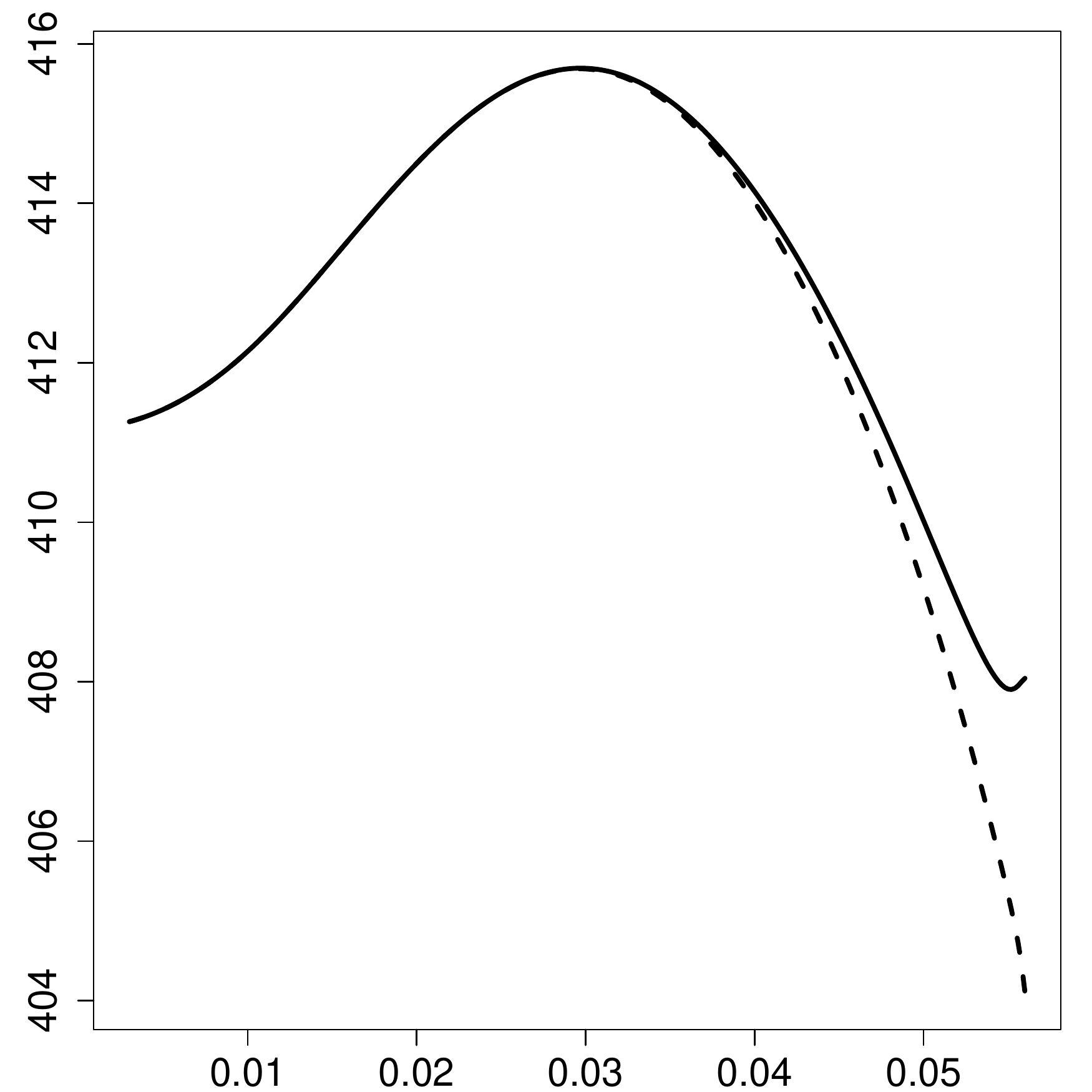} &
   \includegraphics[width=0.3\textwidth]{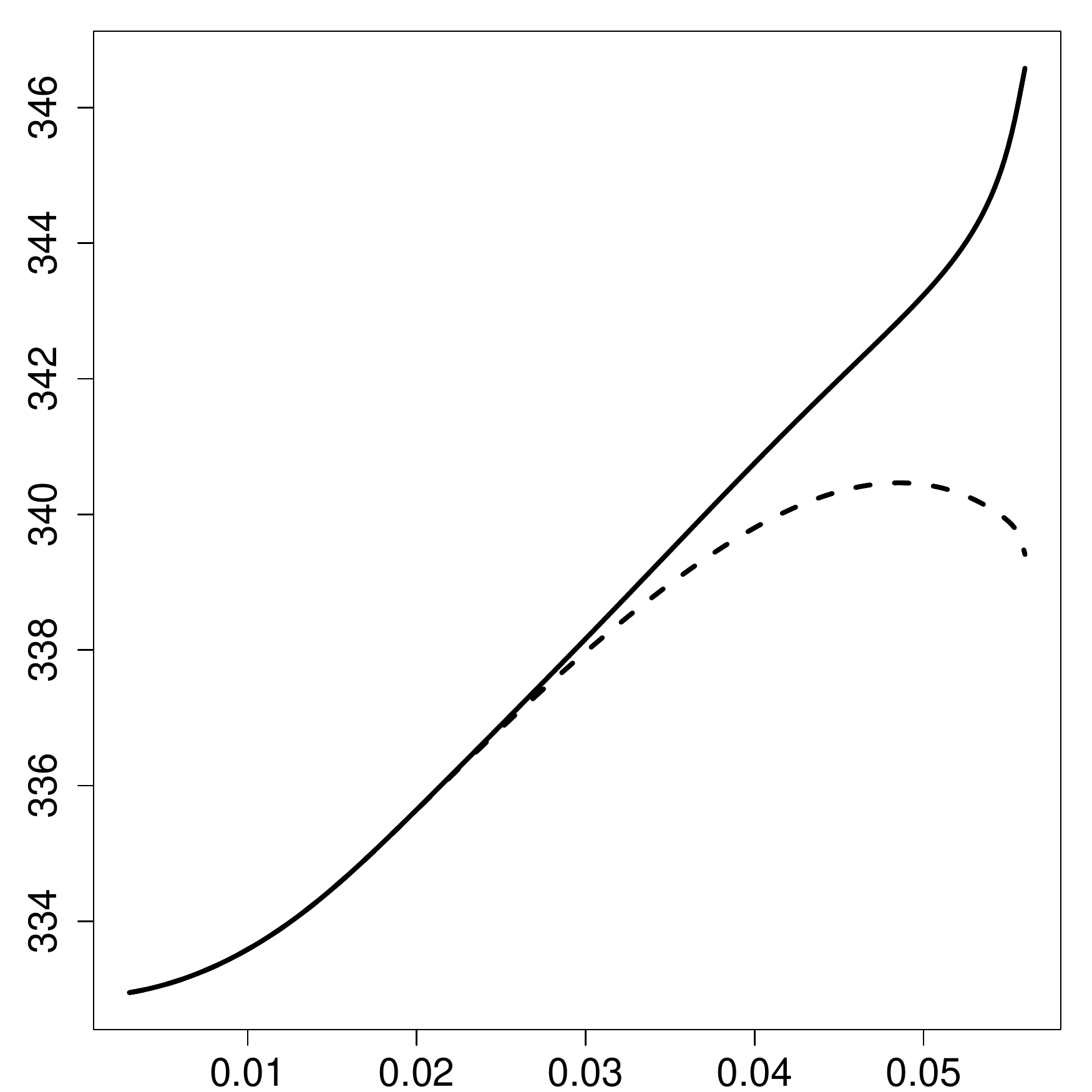} 
\end{tabular}
	  \caption{\label{CorrectionPlots} Comparison between $\tilde l(100,\alpha|X)$ (solid lines) and $\tilde l^{~\T}(100,\alpha|X)$ (dashed lines) with respect to $\alpha$ where $X$ is one realization of a DPP on $[0,1]^2$ with a Gaussian-type kernel  with true parameters $\rho^*=100$ and, from left to right, $\alpha^*=0.01$, $0.03$ and $0.05$.}
\end{figure}

Concerning the computation time, even if our MLE approximation is much faster than the Fourier approximation \eqref{eq:Fourier_approx}, it can be heavy due to the need to optimize a function defined as the log-determinant of an $n\times n$ matrix, where $n$ is the number of observed points. For comparison, each MCE took less than one second on a regular laptop in each case considered in Figure \ref{Boxplots}, while each computation of the approximate MLE took between 1 and 2 seconds when $W=[0,2]^2$ and about $10$ seconds when $W=[0,3]^2$.

\subsection{Performance for Bessel-type DPPs}\label{sec bessel}

In order to evaluate the possible limitations of our approach, we consider in this section the estimation of Bessel-type DPPs, see Table~\ref{parametric models}, whose kernels do not satisfy the theoretical assumptions in Section~\ref{section theory}. 
As in the previous section, we set $\rho^*=100$, $\alpha^*=0.01, 0.03, 0.05$, corresponding to weak, medium and strong repulsiveness, and the observation window is $[0,1]^2$, $[0,2]^2$ and $[0,3]^2$. The results on 500 runs in each situation are shown in Figure~\ref{Boxplots Bessel} and in Table~\ref{MSEtable Bessel}. They compare our edge-correction approximate MLE, the Fourier series approximation \eqref{eq:Fourier_approx}, the MCE based on the pair correlation function and the MCE based on the Ripley's $K$ function. The performances are globally in line with the observations made in the previous section, showing that the approximate MLE outperforms MCEs, especially when the observation windows is large enough. Note that we have added the Fourier series approximation for comparison, because contrary to the models considered in the previous section, its behavior slightly differs from our edge-correction approximation for  Bessel-type DPPs, as discussed in the following.

\begin{figure}
\centering
{\small
\begin{tabular}{b{1.2cm}m{0.5cm}*4{>{\centering}p{0.175\textwidth}}}
 \raisebox{2cm}{$\alpha^*=0.01$} & \multicolumn{5}{l}{ \includegraphics[width=0.85\textwidth,height=0.2\textheight]{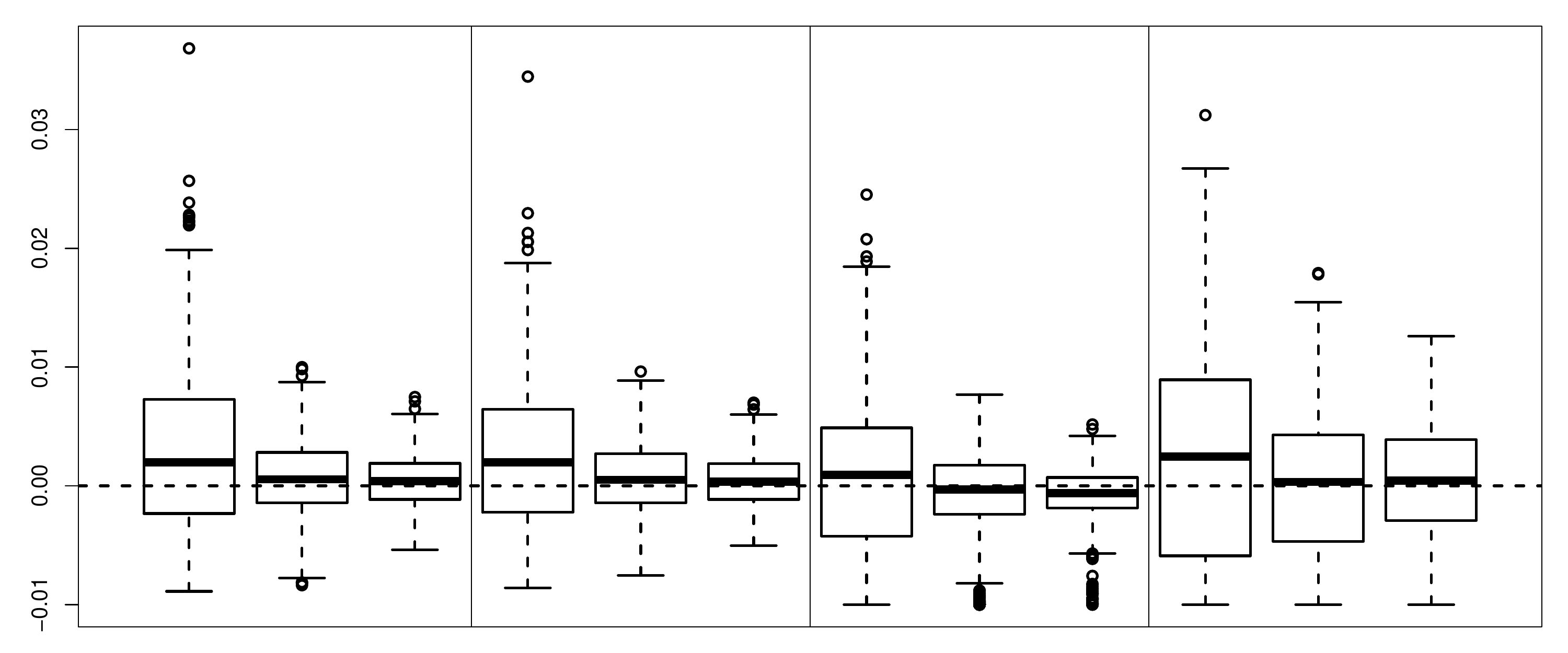}
}\\
 \raisebox{2cm}{$\alpha^*=0.03$} & \multicolumn{5}{l}{ \includegraphics[width=0.85\textwidth,height=0.2\textheight]{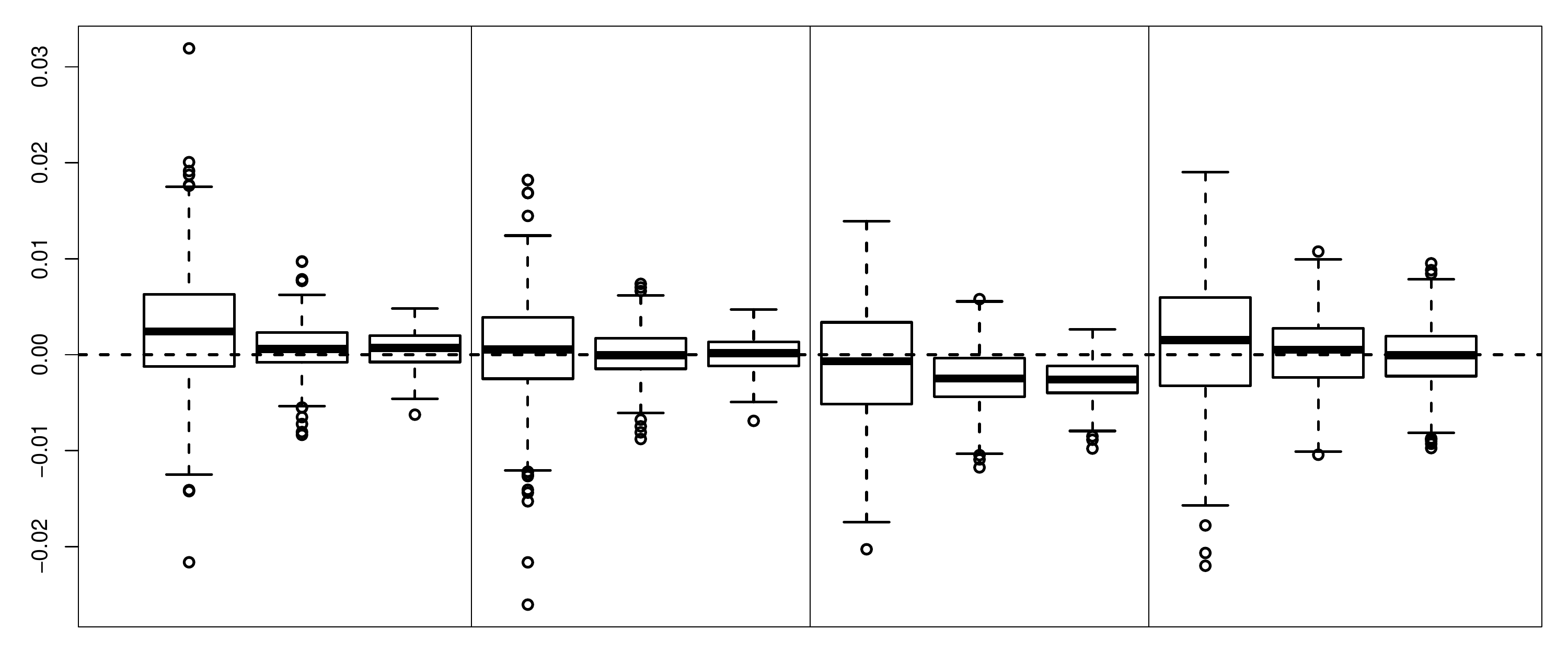}
}\\
 \raisebox{2cm}{$\alpha^*=0.05$} & \multicolumn{5}{l}{ \includegraphics[width=0.85\textwidth,height=0.2\textheight]{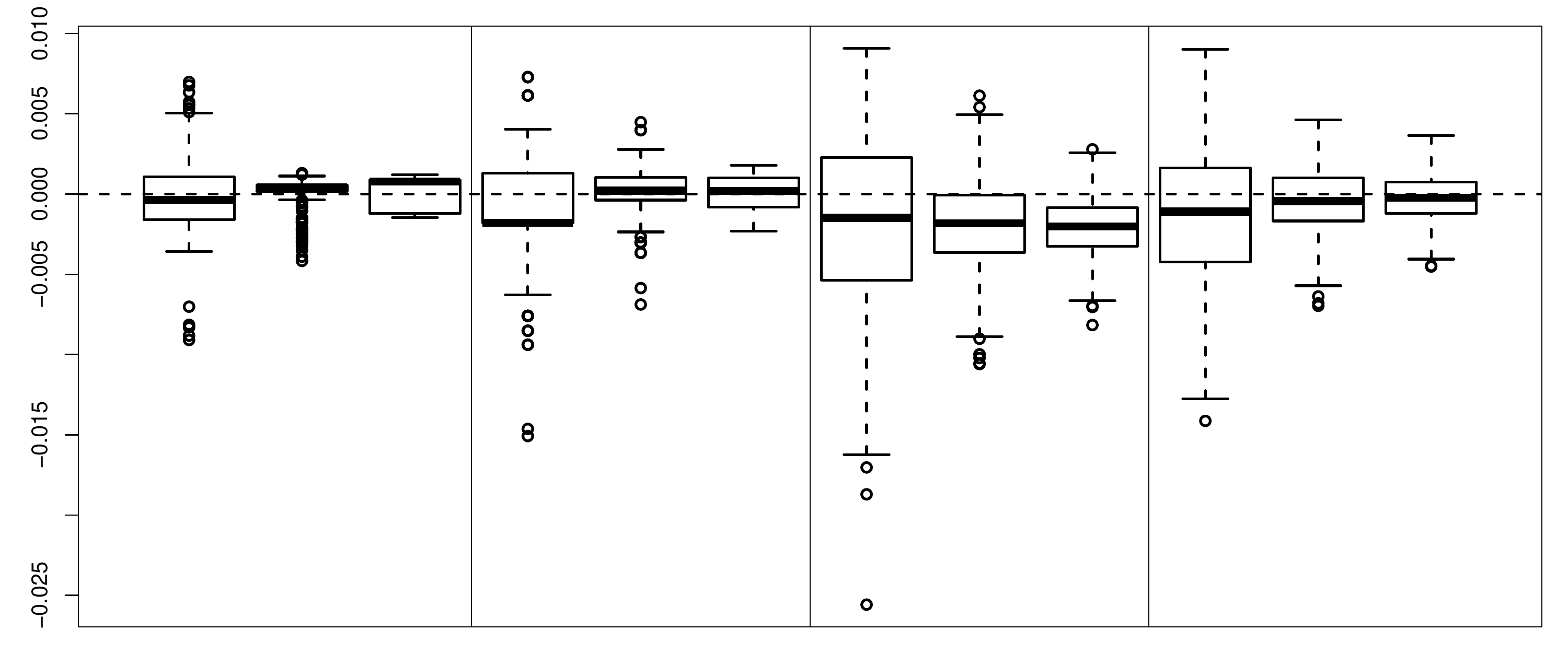}
}\\
& & $\tilde l^{~\T}(\theta|X)$ & Fourier approx. & pcf & Ripley
\end{tabular}
}
 \caption{\label{Boxplots Bessel} Boxplots of $\hat\alpha-\alpha^*$ generated from $500$ simulations of Bessel-type DPPs with true parameters $\rho^*=100$ and, from top to bottom, $\alpha^*=0.01$, $0.03$ and $0.05$. Each row shows the behaviour of the following 4 estimators when the simulation window is, from left to right in each box, $W=[0,1]^2$, $[0,2]^2$ and $[0,3]^2$:  the approximate MLE with edge-corrections based on $\tilde l^{~\T}(\hat\rho,\alpha|X)$, the Fourier series approximate MLE \eqref{eq:Fourier_approx}, the MCE based on the pair correlation function and the MCE based on the Ripley's $K$ function.}
\end{figure}

\begin{table}
\centering
{\renewcommand{\arraystretch}{1.25}
\begin{tabular}{|c|c|c|c|c|c|c|c|c|c|}
\hline
Window & \multicolumn{3}{c|}{$[0,1]^2$} & \multicolumn{3}{c|}{$[0,2]^2$} & \multicolumn{3}{c|}{$[0,3]^2$} \\
\hline
$\alpha^*$ & 0.01 & 0.03 & 0.05 & 0.01 & 0.03 & 0.05 & 0.01 & 0.03 & 0.05 \\
\hline
MLE based on $\tilde l^{~\T}$ & 0.56 & 0.49 & \textbf{0.04} & 0.12 & 0.08 & \textbf{0.01} & \textbf{0.05} & \textbf{0.03} & \textbf{0.01}\\
Fourier approx. MLE & \textbf{0.47} & \textbf{0.32} & 0.09 & \textbf{0.11} & \textbf{0.06} & 0.02 & \textbf{0.05} & \textbf{0.03} & \textbf{0.01} \\
MCE (pcf) &  0.50  & 0.39 & 0.33 & 0.21 & 0.14 & 0.11 & 0.10 & 0.11 & 0.07\\
MCE ($K$) & 0.95 & 0.46 & 0.19 & 0.41 & 0.15 & 0.04 & 0.27 & 0.10 & 0.02\\
\hline
\end{tabular}}
\caption{{ Estimated mean square errors (x$10^4$) of $\hat\alpha$ for Bessel-type DPPs on different windows and with different values of $\alpha$, each computed from $500$ simulations. 
}} \label{MSEtable Bessel} \end{table}

Despite the decent results of our approximation for Bessel-type DPPs, some issues appear with this model in the most repulsive case $\alpha^*=0.05$. As noticed in Section~\ref{sec:periodic_correction}, the determinant in \eqref{approxLLtorus} may be negative for high values of $\alpha$, making the computation of the approximate likelihood impossible. This problem is illustrated in the rightmost plot of Figure~\ref{Bessel issues}, that shows an example of an approximated likelihood function as in  \eqref{approxLLtorus} from one realization of a Bessel-type DPP on $W=[0,3]^2$ with $\rho^*=100$ and $\alpha^*=0.05$. The cross-type points on the right of this plot indicate the values of $\alpha$ where the determinant was negative. More generally, for the highest values of $\alpha$, the approximate likelihood is clearly not trustable. Fortunately, the optimization procedure was not affected by this phenomena and succeeded to return a local maximum in the vicinity of $\alpha^*$. 
However, another peculiar behaviour occurs in this situation, which is the small M-shape of the approximate likelihood in this vicinity. This feature was common to most of the approximate likelihoods in our simulations on $W=[0,3]^2$ with $\rho^*=100$ and $\alpha^*=0.05$, but we are not able to provide a clear explanation of this phenomena. The consequence is that the optimizer chooses one of the two local maxima from this M-shape, resulting in a bi-modal distribution of $\hat\alpha$ in this case, as showed in the leftmost plot of Figure~\ref{Bessel issues}. This also explains the shape of the boxplot associated to this case in Figure~\ref{Boxplots Bessel}. In front of such peculiar M-shape of the contrast function, it might be natural to choose as the optimum the average of the two local maxima instead of one of them. Adopting this strategy decreases the estimation mean square error from $1$ to $0.25$ (x$10^{-6}$).

\begin{figure}
\centering
\begin{tabular}{cc}
     \includegraphics[height=0.35\textwidth]{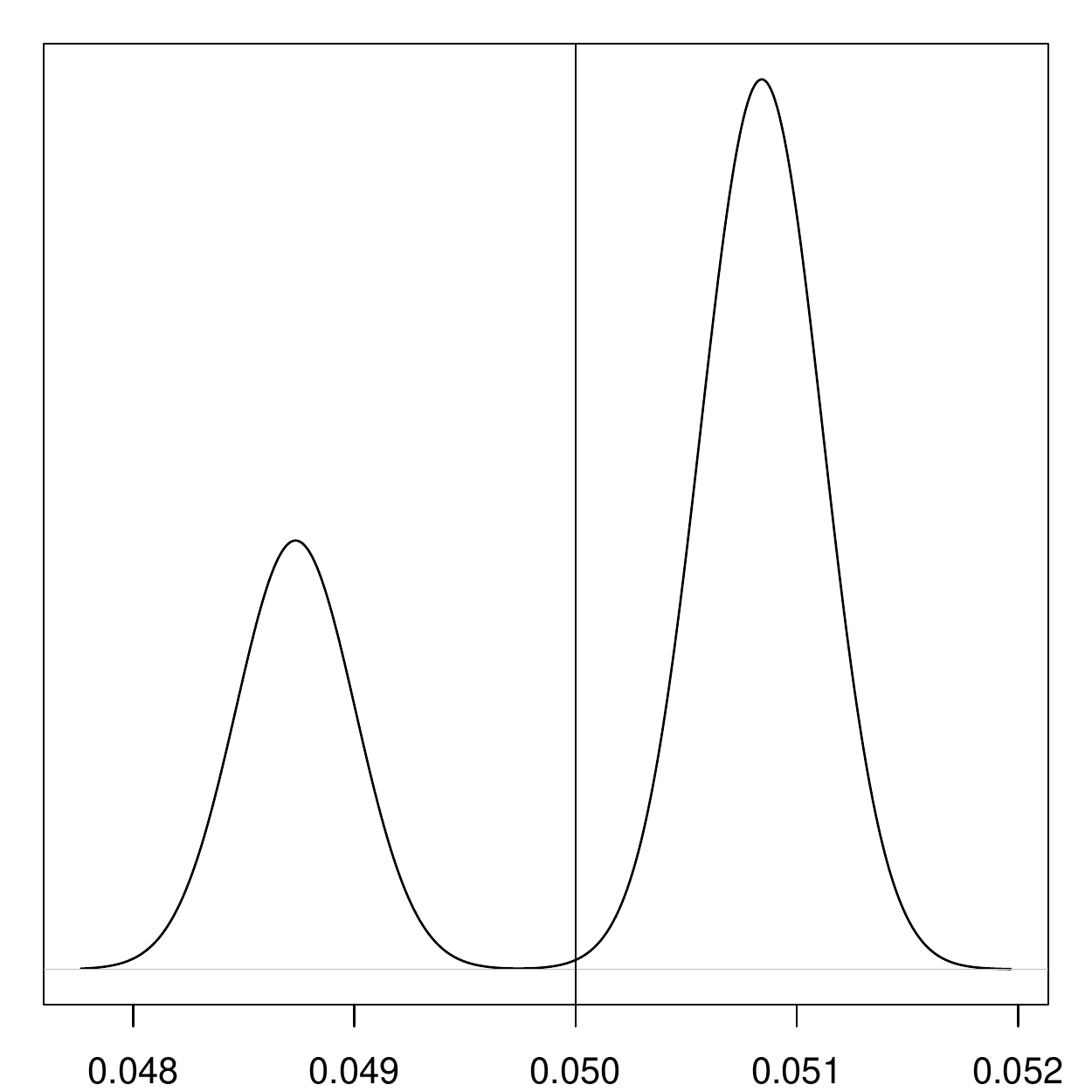}  &
     \includegraphics[height=0.35\textwidth]{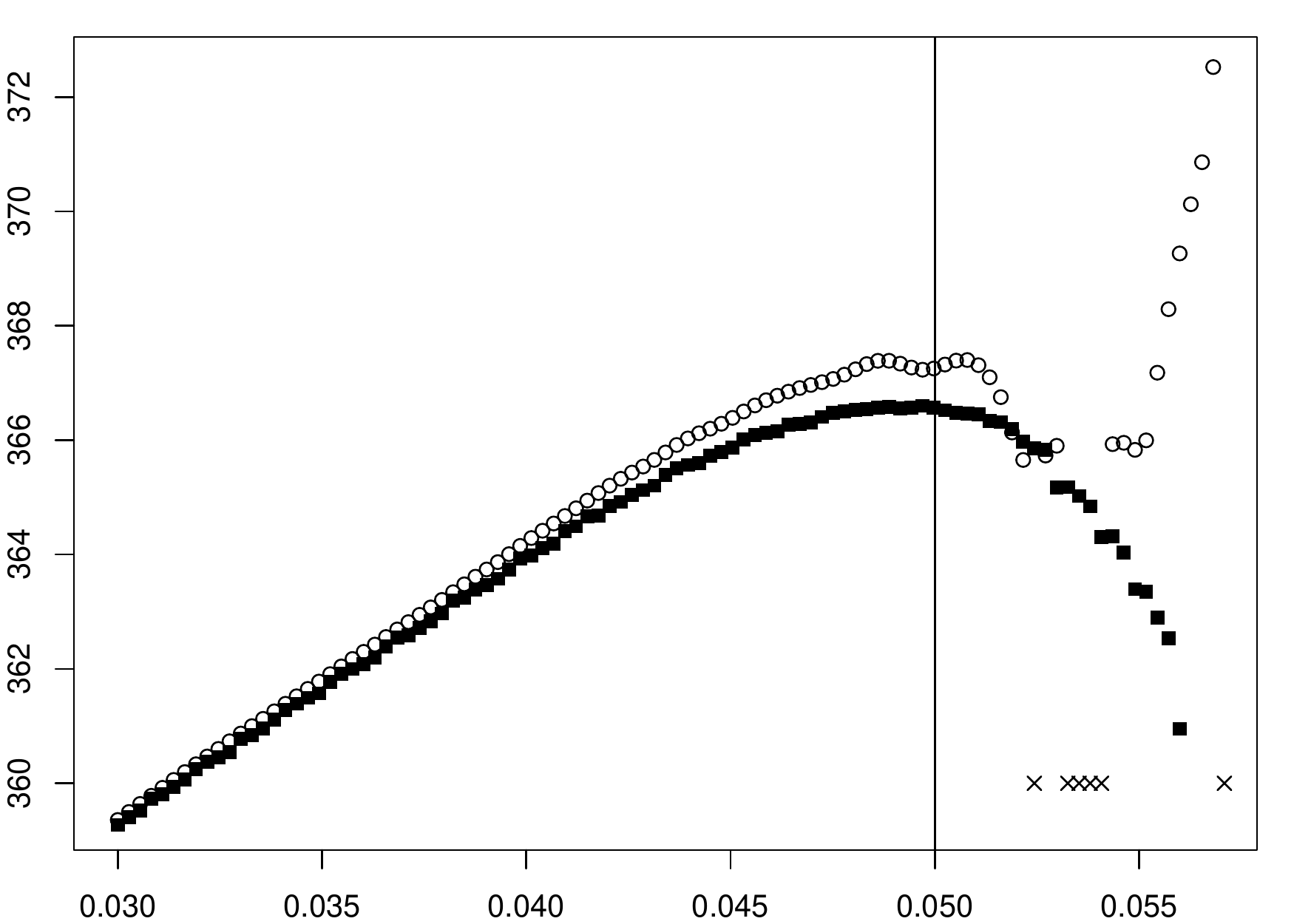} 
   \end{tabular}
	  \caption{\label{Bessel issues} Left: distribution of $\hat\alpha$ obtained by the approximate MLE   $\tilde l^{~\T}$, based on 500 simulations when $\rho^*=100$, $\alpha^*=0.05$ (represented by the vertical line) and $W=[0,3]^2$. Right: $\alpha\mapsto \tilde l^{~\T}(\hat\rho,\alpha|X)$ (circles) and Fourier series approximation \eqref{eq:Fourier_approx} of the log-likelihood (black squares) from one realization $X$ as before, where the vertical line shows the true parameter $\alpha^*=0.05$ and the cross-type points indicate the values of $\alpha$ for which the determinant in  $\tilde l^{~\T}(\hat\rho,\alpha|X)$ was negative.}
\end{figure}

It is interesting to note  that for Bessel-type DPPs, unlike the DPP models of Section~\ref{sec:Simu_Pas_Bessel}, the Fourier series approximation \eqref{eq:Fourier_approx} of the MLE has a more significative difference of behaviour than our approximate MLE with edge correction \eqref{approxLLtorus}. As shown in Figure \ref{Bessel issues}, it does not have undefined values and it does not follow a chaotic behavior for large values of $\alpha$. Moreover, because the Fourier transform of the Bessel kernel only takes two different values (see Table \ref{parametric models}), the terms in the Fourier approximation \eqref{eq:Fourier_approx} when $d=2$ simplify as:
$$\sum_{\underset{\|k\|<N}{k\in\Z^2}}\log(1-\hat K_0^\theta(k))=\log\left(1-\rho\pi\alpha^2\right)\sum_{k\in\{-N,\cdots,N\}} \left(2\left\lfloor\sqrt{\frac{1}{\pi^2\alpha^2}-k^2}\right\rfloor +1\right),~\mbox{and}$$
$$L^\theta_{app}(x,y)=\frac{\rho\pi\alpha^2}{1-\rho\pi\alpha^2}\sum_{k\in\{-N,\cdots,N\}}\frac{\cos(2\pi k x)\sin\left(\pi y\left(2\left\lfloor\sqrt{\frac{1}{\pi^2\alpha^2}-k^2}\right\rfloor +1\right)\right)}{\sin\left(\pi y\right)},$$
where the truncation constant is $N =\left\lfloor \frac{1}{\pi\alpha} \right\rfloor$. This simplification makes it easier to compute than in the general case, and results in a more competitive  computation time, similar to our approximation   \eqref{approxLLtorus}. As a result, we observe in Table~\ref{MSEtable Bessel} and Figure~\ref{Boxplots Bessel}  that for $\alpha^*=0.01$ and $0.03$, the Fourier approximation method has very similar performances than our approximation  \eqref{approxLLtorus}. When $\alpha^*=0.05$, the Fourier approximation estimator has also a similar quadratic error, but the distribution of the estimator is more regular, for the reasons noticed above.

Finally, despite the fact that Bessel-type DPPs are not covered by our theory and the peculiar behaviour of $\tilde l^{~\T}$ for some values of $\alpha$, our approach still remains competitive in this case and outperforms standard MCE methods. Nevertheless, because the Fourier approximation \eqref{eq:Fourier_approx} simplifies nicely in this setting and does not show the same chaotic behaviour as  \eqref{approxLLtorus}  for large values of $\alpha$, it seems to be a slightly better choice for Bessel-type DPPs. However, we recall that this approach is limited to rectangular observation windows only.

\subsection{Simulations on a non-rectangular window}\label{sec: simuR}

We consider in this section the estimation of a Gaussian-type DPP on the (non-rectangular) R-shape window as in the simulations of Figure~\ref{fig:simuR}. The underlying parameters are $\rho^*=100$, resulting  in 370 points on average, and $\alpha^*=0.01,0.03$ and $0.05$. The estimation of $\alpha^*$ is carried out by the MLE approximation \eqref{approxLL}  (without edge-corrections), the edge-corrected version described below, and the MCEs based on the pcf and the Ripley's $K$-function. Note that in this situation, the Fourier approximation  \eqref{eq:Fourier_approx} is not feasible. 

\begin{figure}
\centering
\begin{tabular}{ccc}
   \includegraphics[width=0.3\textwidth]{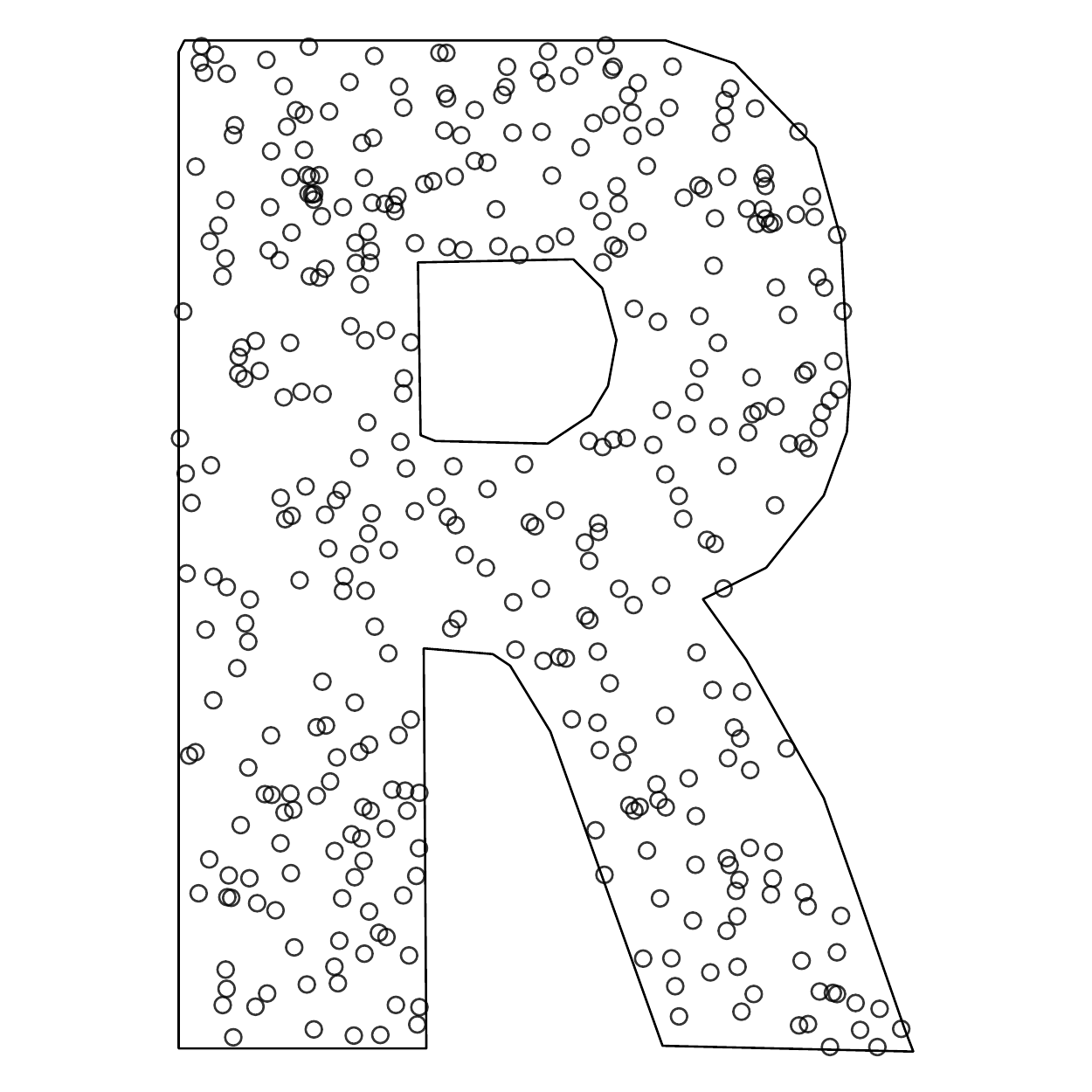} &
      \includegraphics[width=0.3\textwidth]{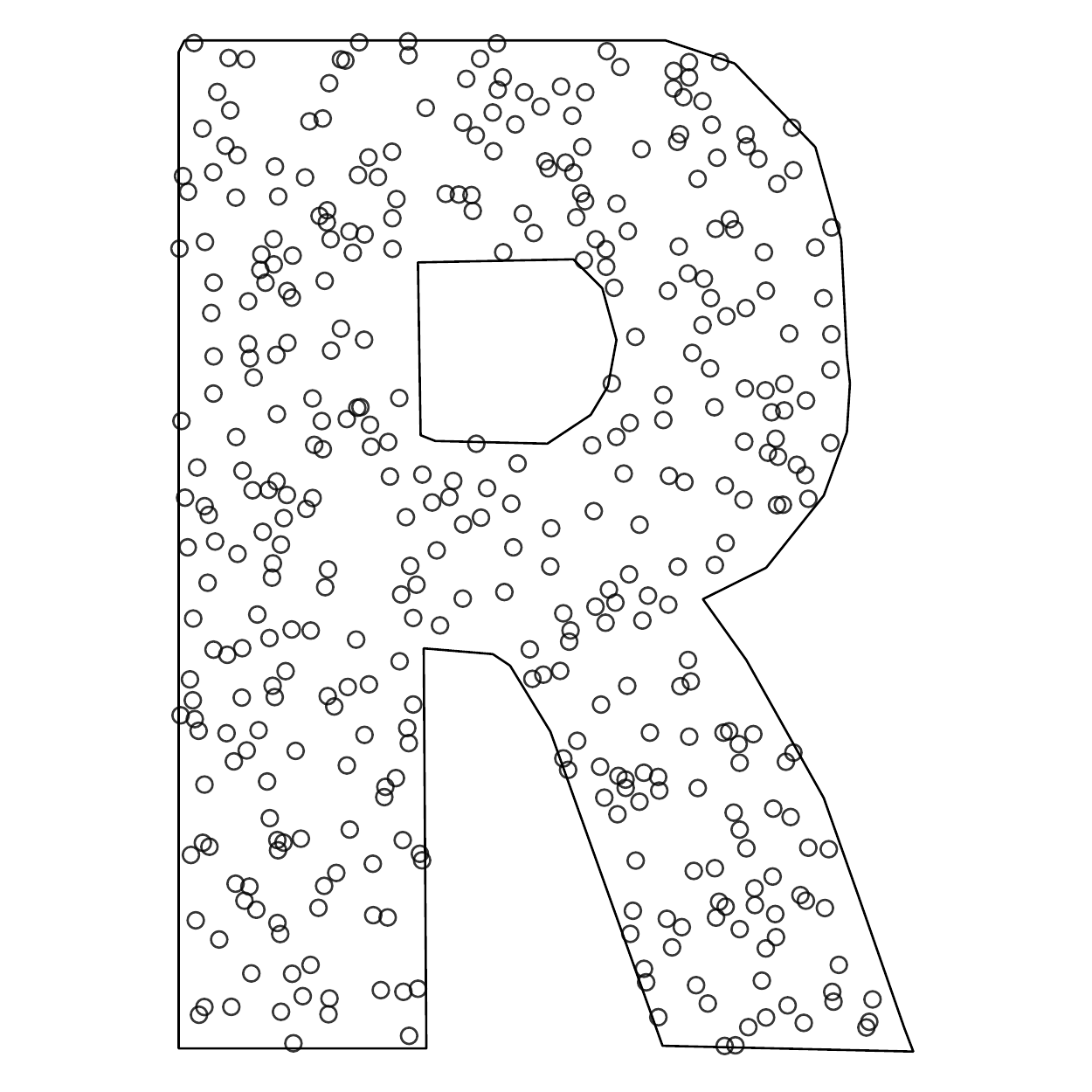} &
   \includegraphics[width=0.3\textwidth]{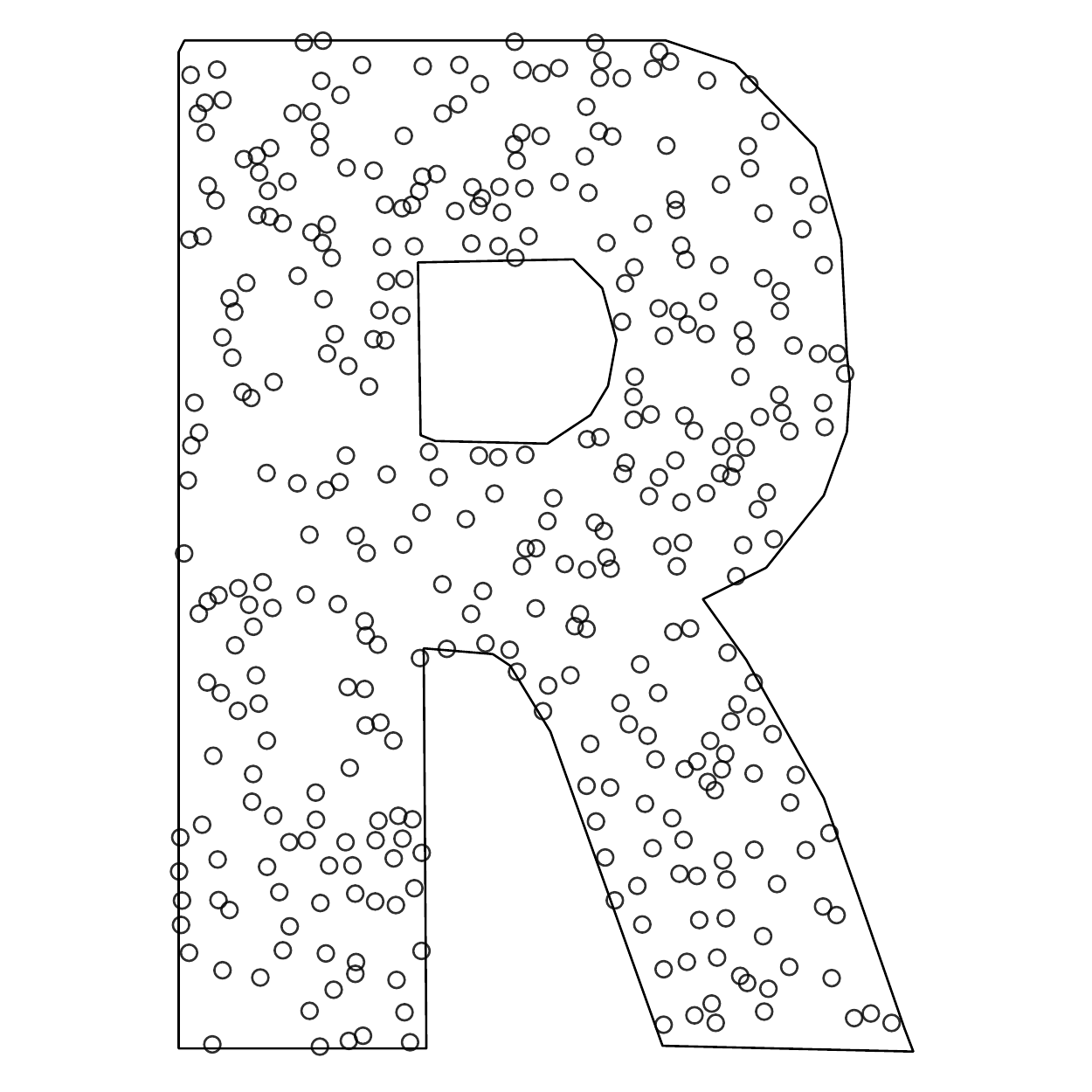} \\
     \includegraphics[width=0.3\textwidth]{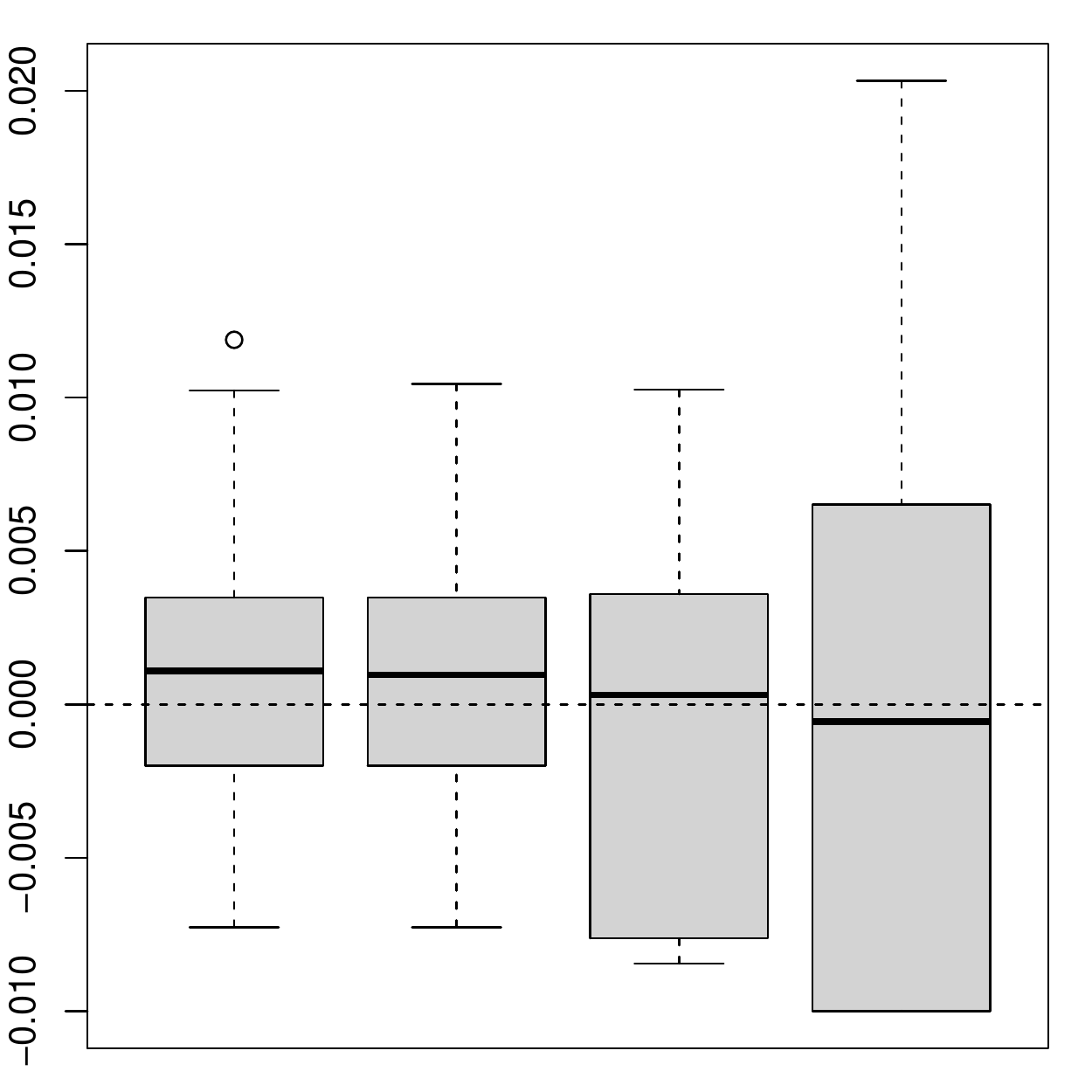} &
      \includegraphics[width=0.3\textwidth]{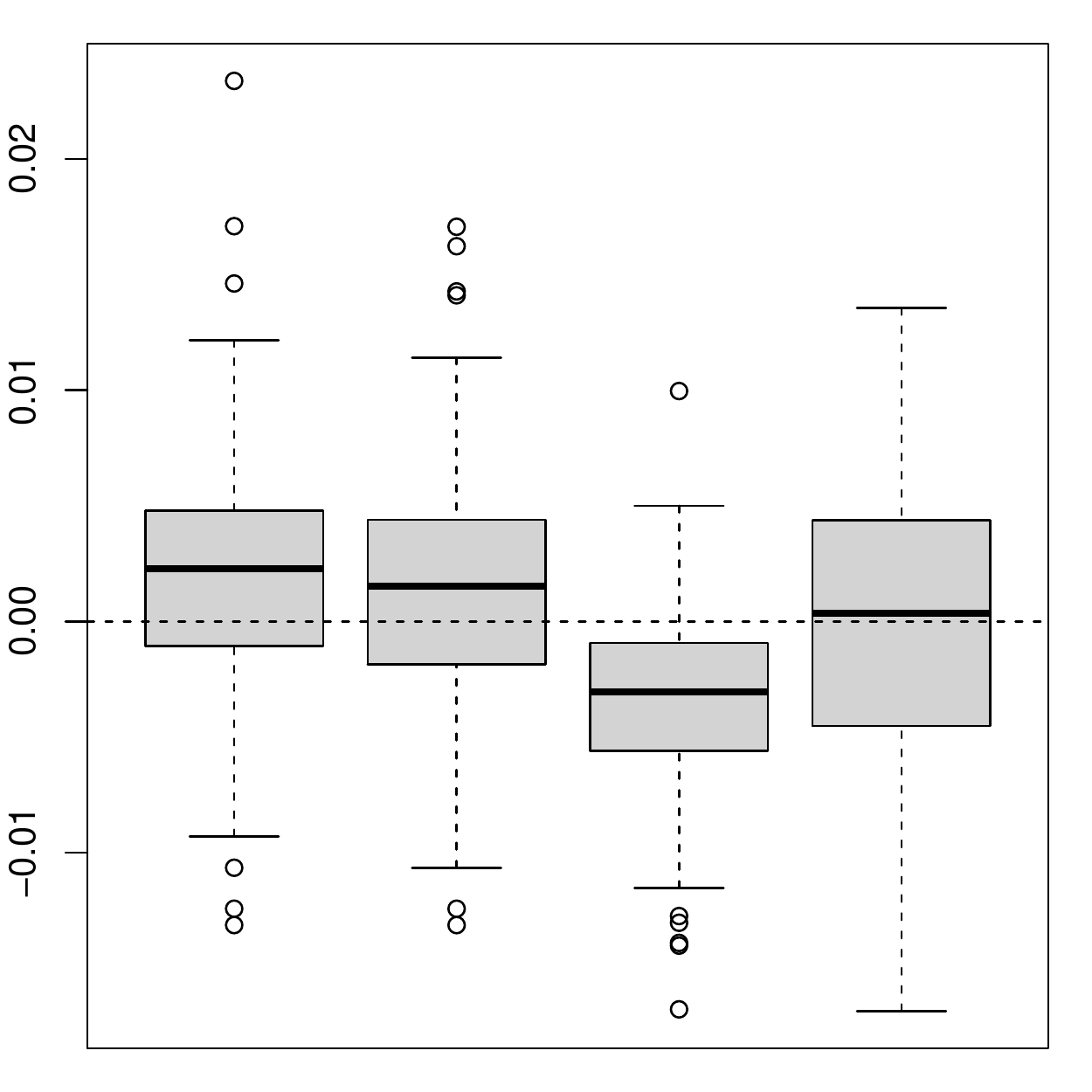} &
   \includegraphics[width=0.3\textwidth]{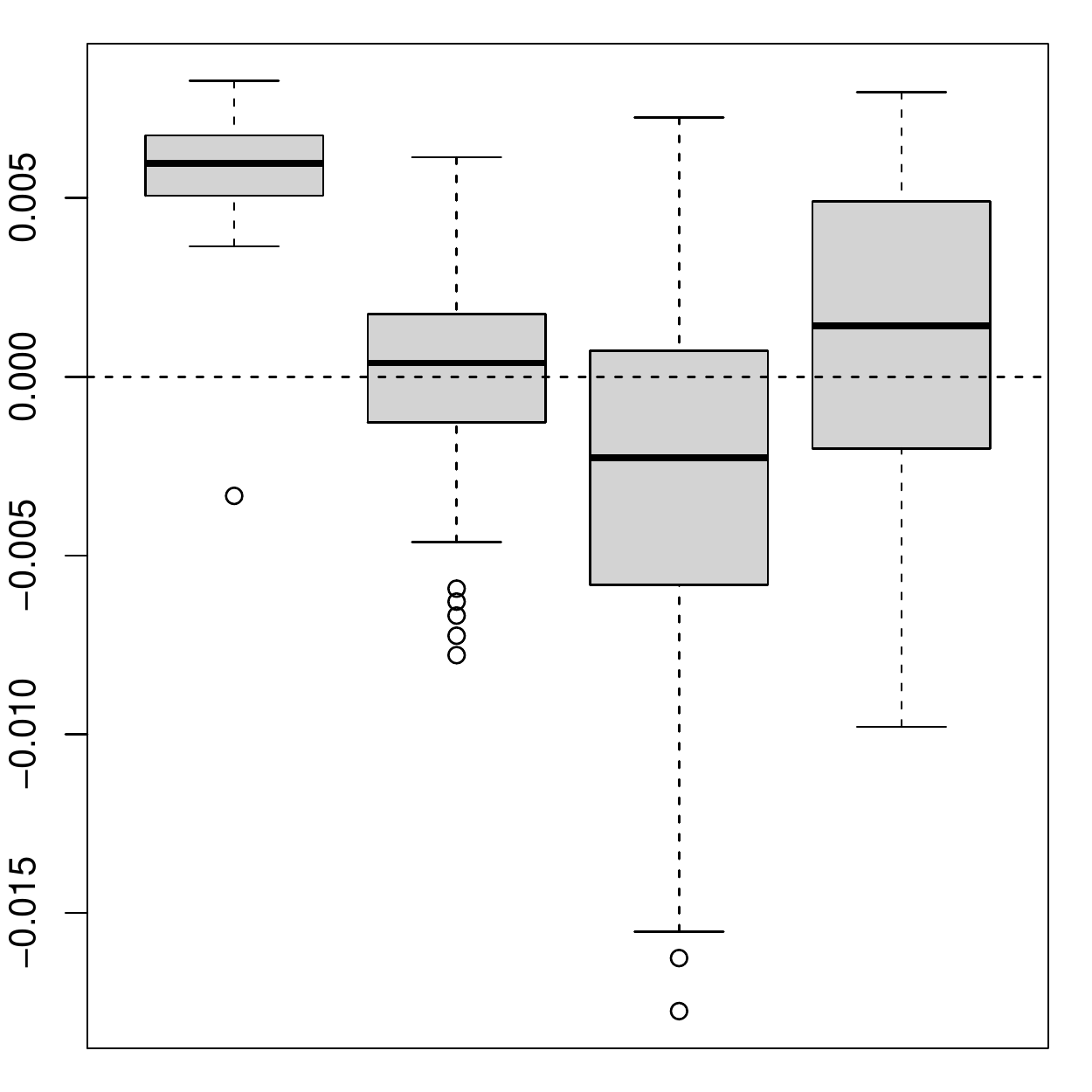}\\
   $\alpha^*=0.01$ & $\alpha^*=0.03$ & $\alpha^*=0.05$ 
   \end{tabular}
	   \caption{\label{fig:simuR} Top: Examples of realizations of Gaussian-type DPPs with parameters $\rho^*=100$ and $\alpha^*=0.01,0.03,0.05$ (from left to right) on a R-shape window. Bottom: distribution of $\hat\alpha$ from 100 simulations for each value of $\alpha^*$ and for the following estimators (from left to right in each plot): the approximate MLE \eqref{approxLL}, its edge-corrected version as detailed in the text, and the MCEs based on the pcf and the Ripley's $K$ function.}
\end{figure}

We handle the edge-effects for this non-rectangular window in the following way.  Note that the periodic edge-correction presented in Section~\ref{sec:periodic_correction} amounts to replace some zero-values of the matrix $L_0^\theta[X\cap W]$ by non negligible values. If we assume that the function $L_0^\theta$ is isotropic, as for the Gaussian-type DPPs considered in this section, then $L_0^\theta[X\cap W]=L_\rad^\theta(R)$ where $R$ is the pairwise distance matrix of $X$, i.e. $R=(r_{ij}:=\|x_i-x_j\|)_{1\leq i,j\leq n}$ if $X=\{x_1,\dots,x_n\}$. Precisely, the replacements concern the entries   involving a point $x_i$ close to the border of the window and they amount to replace some of the largest distances $r_{ij}$ by smaller ones. The idea is that for these points, we need to artificially increase the number of closed neighbours to account for edge-effects. Adopting this idea, we replace some entries of the matrix $R$ as follows:
\begin{enumerate}
\item We start by setting a maximal range of interaction $r_{\max}$. In our example we choose  $$r_{\max}=\argmax_{r_{ij}}\{L_{\rad}^{+} (r_{ij})>0.001 L_{\rad}^{+}(0)\},$$ where $L_{\rad}^+=L_{\rad}^\theta$ for $\theta=(\hat\rho, 0.9\alpha_{\max})$, $\hat\rho=N(W)/|W|$, $\alpha_{\max}=\sqrt{1/(\pi \hat \rho)}$ and $L_{\rad}^+(0)$ is the maximal possible value of $L_{\rad}^+$. This choice guarantees that for any $r>r_{\max}$ and any $\theta=(\hat\rho,\alpha)$ with $\alpha<0.9\alpha_{\max}$, $L_{\rad}^\theta(r)$ can be considered to be negligible. 
\item For $i=1,\dots,n$, we denote by $d_i$ the Euclidean distance from $x_i$ to $\partial W$, and by $n_i=\card\{j,\,  r_{ij}<r_{\max}\}$ the number of neighbours of $x_i$ in $X$. We further denote by $\mathcal B=\{x_i\in X,\, d_i<r_{\max}\}$ the set of ``border'' points of $X$ in $W$ and by $\bar{\mathcal B}=X\setminus\mathcal B$ the set of ``interior'' points of $X$ in $W$. Finally, we consider $\mathcal R_{\bar{\mathcal B}}=\{r_{ij},\, x_i\in \bar{\mathcal B}\}$ the set of observed pairwise distances for the interior points of $X$, and $\mathcal N_{\bar{\mathcal B}}=\{n_i, x_i\in \bar{\mathcal B}\}$ the set of numbers of neighbours of the interior points.
\item For all $x_i\in\mathcal B$, we randomly pick out $\tilde n_i$ in $\mathcal N_{\bar{\mathcal B}}$ and compare it to $n_i$. 
If $n_i\geq \tilde n_i$, we do nothing. Else, for $j=(i+1),\dots,(i+\tilde n_i-n_i)\wedge n$ and if $r_{ij}>r_{\max}$, we randomly pick out $\tilde r_{ij}$ in $\mathcal R_{\bar{\mathcal B}}\cap\{r_{ij}>d_i\}$ and we replace $r_{ij}$ and $r_{ji}$ by $\tilde r_{ij}$. 
\end{enumerate}
Note that the number of replacements in this edge-correction procedure is limited: they  only concern the border points of $X$, there are a maximum of $\tilde n_i-n_i$ of them for each border point $x_i$, and the replaced value $\tilde r_{ij}$ of $r_{ij}>r_{\max}$ is necessarily greater than $d_i$, which in many cases (especially if $\alpha$ is small) entails $L_{\rad}^\theta(\tilde r_{ij})\approx 0$ and does not affect the initial value $L_{\rad}^\theta(r_{ij})\approx 0$. With the resulting new matrix $R$, there is not guaranty that  $L_\rad^\theta(R)$ is positive, a common issue with the periodic edge corrections of 
Section~\ref{sec:periodic_correction}, but the restricted number of replacements limits the risk to encounter such a problem. In our experience, this happened only for very high values of $\alpha$ and did not affect the optimisation procedure. 

The results displayed in Figure~\ref{fig:simuR} show that the above edge-correction version of \eqref{approxLL} provides the best results and clearly outperforms the MCE methods. They also confirm that this edge-correction is only necessary for the most repulsive DPPs, i.e.  $\alpha^*=0.05$ here, otherwise the approximation \eqref{approxLL}  and its edge-corrected version perform just as well.

\subsection{Estimation of the standard errors}

In order to numerically assess the quality of estimation of the standard errors, as described in Section~\ref{sec:sd}, 
we consider the estimation of $\alpha^*$ for Gaussian, Cauchy and Bessel families of DPPs when the observation window $W$ is either $[0,1]^2$, $[0,2]^2$ or $[0,3]^2$, the intensity is $\rho^*=100$ and for three different values of $\alpha^*$. These values of $\alpha^*$ correspond to low, mild and strong repulsion, specifically $\alpha^*\in\{0.01,0.03,0.05\}$  for the Gaussian and Bessel models and $\alpha^*\in\{0.005,0.02,0.035\}$ for the Cauchy model. 

In these cases, we have $\theta=(\rho,\alpha)$ and the observed information matrix is estimated by 
$$\tilde I(\hat\rho,\hat\alpha) =  -|W|\begin{pmatrix}
\partial_{\rho}^2 \tilde l^{~\T}(\hat\rho, \hat\alpha|X) & \partial_{\rho}\partial_{\alpha} \tilde l^{~\T}(\hat\rho, \hat\alpha|X)\\
\partial_{\alpha}\partial_{\rho} \tilde l^{~\T}(\hat\rho, \hat\alpha|X) & \partial_{\alpha}^2 \tilde l^{~\T}(\hat\rho, \hat\alpha|X)\end{pmatrix},$$
where $\hat\rho=N(W)/|W|$, $\hat\alpha\in\argmin_{\alpha}{\tilde l^{~\T}(\hat\rho, \alpha|X)}$,  $\tilde l^{~\T}$ is given by \eqref{approxLLtorus}, and the derivatives are obtained using \eqref{eq:2nd_derivative_LL}, Table \ref{parametric models} and Table \ref{L0 models}.

Following Section~\ref{sec:sd}, the variance of $(\hat\rho,\hat\alpha)$ is estimated by $\tilde I(\hat\rho,\hat\alpha)^{-1}$ and an approximated $95\%$ confidence interval for $\alpha^*$ is then
\begin{equation}\label{eq:IC}
\left[\hat\alpha \pm \frac{1.96}{\sqrt{|W|}}\left(\frac{(\partial_{\alpha}\partial_{\rho} \tilde l^{~\T}(\hat\rho, \hat\alpha|X))^2}{\partial_{\rho}^2 \tilde l^{~\T}(\hat\rho, \hat\alpha|X)}-\partial_{\alpha}^2 \tilde l^{~\T}(\hat\rho, \hat\alpha|X)\right)^{-1/2}\right].
\end{equation}

In Table \ref{tab:IC_correct}, we report, for each DPP family, each choice of window $W$ and each value of $\alpha^*$, the proportion of times $\alpha^*$ falls in that interval, out of $500$ simulations. Note that it might happen that \eqref{eq:IC} is not well-defined, which occurs when $\tilde I(\hat\rho,\hat\alpha)$ is not  positive definite. We report in parenthesis in Table \ref{tab:IC_correct} the proportion of times this issue arose for each case.

We observe from Table~\ref{tab:IC_correct} that the approximated confidence interval \eqref{eq:IC} is inaccurate for the Bessel family,  which is in line with our findings of Section~\ref{sec bessel}. In particular, the peculiar behavior of $\alpha\mapsto \tilde l^{~\T}(\hat\rho,\alpha|X)$ in the vicinity of $\alpha^*$ when $\alpha^*=0.05$ and $W=[0,3]^2$, as shown in Figure~\ref{Bessel issues}, makes irrelevant any estimation of its derivatives at $\alpha^*$, which certainly explains the low coverage rate of  the interval \eqref{eq:IC}  observed in this case. For the Gaussian and Cauchy DPPs families, the results are all the more satisfying that the window is large. For these families, the approximated confidence interval  \eqref{eq:IC} seems to be  trustable whenever there are more than 400 points (corresponding to the case $W=[0,2]^2$), even if it tends to seemingly underestimate the standard error of $\hat\alpha$. 

We finally made the same kind of simulations for the (non-rectangular) R-shape window of Section~\ref{sec: simuR} with the same model as there. Note that the setting is comparable with the Gaussian DPP when $W=[0,2]^2$ in Table~\ref{tab:IC_correct}, except for the shape of the window, since the mean number of points are similar and  the kernel and the values of $\alpha^*$ are the same. The coverage rate for the R-shape window was $87.4\%$ for $\alpha^*=0.01$ (low repulsion), $90\%$ for  $\alpha^*=0.03$ (mild repulsion) and $63.2\%$ for $\alpha^*=0.05$ (strong repulsion). These results are of the same order as in Table~\ref{tab:IC_correct} for low and mild repulsion, but worse for $\alpha^*=0.05$. This last result is probably due to edge-effects in this case, and shows that there is still an avenue to improve edge-corrections for non-rectangular windows. 

\begin{table}[H]
\centering
{\renewcommand{\arraystretch}{1.25}
\begin{tabular}{|c||c|c|c||c|c|c||c|c|c|}
\hline
Window & \multicolumn{3}{c||}{$[0,1]^2$} & \multicolumn{3}{c||}{$[0,2]^2$} & \multicolumn{3}{c|}{$[0,3]^2$} \\
\hline
$\alpha^*$ & low & mild & high & low & mild & high & low & mild & high \\
\hline
Gauss & 88.2 & 89.6 & 92 & 88.6 & 94.2 & 92.6 & 93.2 & 93.2 & 92.8\\
&  (0) & (0) & (0) & (0) & (0) & (0) & (0) & (0) & (0) \\ \hline
Cauchy & 89.8 & 88.4 & 72 & 92.4 & 92.2 & 83.6 & 91.4 & 95 & 87.2 \\
& (2) & (3.6) & (25.4) & (0) & (0) & (0) & (0) & (0) & (0)  \\ \hline
Bessel & 66 & 76.6 & 56 & 77.2 & 78.8 & 82.2 & 81 & 74.4 & 12\\
& (0) & (0.2) & (16.2) & (0) & (0) & (7.6) & (0) & (0) & (2.2)\\
\hline
\end{tabular}}
\caption{Proportion of simulations ($\times 100$), out of $500$, in which the approximated $95\%$ confidence interval \eqref{eq:IC} is well-defined and contains $\alpha^*$, for the Gaussian, Cauchy and Bessel DPPs' models, three different observation windows and three values of $\alpha^*$ corresponding to low, mild and strong repulsion (see the text for the exact values).  In parenthesis are indicated the proportion of simulations ($\times 100$)  when the interval \eqref{eq:IC} was not well-defined.} \label{tab:IC_correct}
\end{table}

\section{Conclusion} \label{sec:Conclusion}

In this paper, we have introduced an asymptotic approximation \eqref{approxLL} of the log-likelihood of stationary determinantal point processes on $\R^d$ and $\Z^d$. While the true likelihood is not numerically tractable, this approximation can be computed for stationary parametric families of DPPs based on correlation functions with a known Fourier transform, as the classical ones presented in Table~\ref{parametric models}. Compared to the Fourier approximation of \cite{Lavancier} that only works for rectangular windows, our approximation can be computed for windows of any shape and provides an estimation of the asymptotic variance of the resulting maximum likelihood estimator. However, due to edge effects, the resulting estimator gets heavily biased for strongly repulsive DPPs, as shown in Figure \ref{Boxplots}. We have proposed to use the periodic correction \eqref{approxLLtorus} to fix this issue in the case of rectangular windows and showed that the resulting approximation is very close to the one in \cite{Lavancier} (see Figure \ref{ComparingSmoothness}) but overall easier to compute. The idea to use a periodic correction has been detailed for rectangular windows, but a similar idea can be applied for a window with a different shape, as exemplified in Section~\ref{sec: simuR}.
We showed in the simulation study of Section \ref{sec:Simu} that for standard parametric families of DPPs, the resulting approximate MLE outperforms classical moment methods based on the pair correlation function and Ripley's $K$ function.

Finally, we proved in Propositions \ref{DeterministicCV} and \ref{approxLLCV} that the difference between the true log-likelihood and our approximated log-likelihood converges almost surely towards $0$ for classical parametric families of stationary DPPs on $\Z^d$. We also showed in Proposition \ref{GridApprox2} that DPPs on $\R^d$ can be arbitrarily approached by DPPs on a regular grid, which suggests that our approximation should also converge for DPPs on $\R^d$.  A formal proof of such  result is still a seemingly difficult open problem.
Beyond the approximation of the likelihood, as proposed in this paper, a natural theoretical concern is  the consistency of the maximum likelihood estimator, either based on the true likelihood or on the approximated one. This question is challenging and is not addressed in the present contribution. We however think that our findings  are a step in the right direction towards such a result, because they allow to replace the true likelihood by an easier expression to deal with mathematically.

\section{Proofs of Section \ref{SEC:DEFLL}} \label{sec:ProofCons}

\subsection{Proof of Proposition \ref{DeterministicCV}} \label{DetPart}
In the case where $\X=\R^d$ and $W_n$ is of the form $n\times W$ for some compact set $W$, then the convergence of $\frac{1}{|W_n|}\logdet(\mathcal{I}_{W_n}-\mathcal{K}^{\theta}_{W_n})$ for any fixed value of $\theta$ corresponds to \cite[Proposition 5.9]{Shirai} with $\alpha=-1$ and $f(x)=\infty \times \cara{W}$. Our proof follows a similar idea.

Since all eigenvalues of $\mathcal{K}^{\theta}_{W_n}$ are in $[0,1[$ then the logarithm of the Fredholm determinant of $\mathcal{I}_{W_n}-\mathcal{K}^{\theta}_{W_n}$ can be expanded into
$$\logdet(\mathcal{I}_{W_n}-\mathcal{K}^{\theta}_{W_n})\hspace{-0.05cm}=\hspace{-0.05cm}-\sum_{k\geq 1}\frac{\tr((\mathcal{K}^{\theta}_{W_n})^k)}{k}=-\sum_{k\geq 1}\frac{1}{k}\int_{W_n^k}K^{\theta}_0(x_2-x_1)\cdots K^{\theta}_0(x_1-x_k)\der\nu^k(x).$$
Now, we first assume that $K^{\theta}_0$ and $\hat K^{\theta}_0$ are integrable. Then, for any $x_1\in\X$ the function 
$$f^\theta_{x_1}:(x_2,\cdots,x_k)\mapsto K^{\theta}_0(x_2-x_1)\cdots K^{\theta}_0(x_1-x_k)$$ 
is integrable and its integral is equal to $(K_0^{\theta})^{*k}(0)$ where $(K_0^{\theta})^{*k}(0)$ is the $k$-th times self-convolution of $K_0^{\theta}$. Since we assumed that $(W_n)_{n\geq 0}$ satisfy \eqref{CondWindow}, then by Lemma \ref{CVtransinv} we get that

\begin{multline*}
\left|\frac{1}{|W_n|}\int_{W_n^k}K_0^{\theta}(x_2-x_1)\cdots K_0^{\theta}(x_1-x_k)\der\nu^k(x) - (K_0^{\theta})^{*k}(0)\right|\\
\leq\int_{(\mathcal{B}(0,r_n)^C)^{k-1}}|f^\theta_0(y)|\der \nu^{k-1}(y)+\frac{|(\partial W_n\oplus r_n)\cap W_n|}{|W_n|}\|f^\theta_0\|_{L^1}
\end{multline*}
for any positive sequence $(r_n)_{n\in\N}$. Now, since we assumed that $x\mapsto \sup_{\theta\in\Theta} K_0^\theta(x)$ is integrable then $x\mapsto \sup_{\theta\in\Theta} f_0^\theta(x)$ is also integrable. Moreover, the sequence $(W_n)_{n\in\N}$ satisfies condition $(\mathcal{W})$, hence
$$\sup_{\theta\in\Theta}\left|\frac{1}{|W_n|}\int_{W_n^k}K_0^{\theta}(x_2-x_1)\cdots K_0^{\theta}(x_1-x_k)\der\nu^k(x) - (K^{\theta}_0)^{*k}(0)\right|\cvn 0.$$
Finally, since
$$\frac{|\tr((\mathcal{K}^{\theta}_{W_n})^k)|}{k |W_n|}\leq \frac{\|\mathcal{K}^{\theta}\|^{k-1}}{k}\times\frac{\tr(\mathcal{K}^{\theta}_{W_n})}{|W_n|}\leq \frac{M^{k-1}}{k} \sup_{\theta\in\Theta}K^{\theta}_0(0)$$
which is summable with respect to $k$ and does not depend on $n$ and $\theta$, then we can conclude by the dominated convergence theorem that
$$\sup_{\theta\in\Theta}\left|\frac{1}{|W_n|}\logdet(\mathcal{I}_{W_n}-\mathcal{K}^{\theta}_{W_n})+\sum_{k\geq 1} \frac{K_0^{\theta*k}(0)}{k}\right|\cvn 0.$$
The proof is completed by using the relation
$$-\sum_{k\geq 1} \frac{(K_0^{\theta})^{*k}(0)}{k}=\int_{\X^*}\log(1-\hat K^{\theta}_0(x))\der x.$$

\subsection{Proof of Proposition \ref{approxLLCV}} \label{CVtoExp}

For this proof, we consider $X$ to be a DPP on $\Z^d$. Let $\theta\in\Theta$, we denote by $\lambda^\theta_m$ the lowest eigenvalue of $K^\theta[\Z^d]$ and we define $\lambda^0_m\defeq\inf_{\theta\in\Theta} \lambda^\theta_m$. It is important to note that $\lambda^0_m>0$ as a consequence of \cite[Theorem 5]{min} and the assumptions on $K^\theta$. We begin by proving the following lemma allowing us to control $L^\theta[X\cap W]-L_{[W]}^\theta[X\cap W]$ for any $W\subset\Z^d$ by controlling the difference between their associated operators.

\begin{lem} \label{ineqdet}
Let $\mathcal{K}$ be an integral operator on $L^2(\X,\nu)$ with kernel $K$ such that $\|\mathcal{K}\|<1$. For any Borel set $W\subset\X$, we denote by $\mathcal{P}_W$ the projection on $L^2(W)$ and we define the operators $\mathcal{K}_W\defeq \mathcal{P}_W\mathcal{K}\mathcal{P}_W$ on $L^2(W)$, $\mathcal{L}_{[W]}\defeq \mathcal{K}_W(\mathcal{I}_W-\mathcal{K}_W)^{-1}$ on $L^2(W)$ and $\mathcal{L}\defeq \mathcal{K}(\mathcal{I}-\mathcal{K})^{-1}$ on $L^2(\X)$. We denote by $L$ the kernel of $\mathcal{L}$ and finally we define the operator $\mathcal{N}_{W}\defeq \mathcal{P}_W \mathcal{LP}_{W^C} \mathcal{LP}_W$ on $L^2(W)$ with kernel
\begin{equation}\label{defN} N_{W}(x,y)=\int_{W^C}L(x,z)L(z,y) \der\nu(z)~~\forall x,y\in W.\end{equation}
Then,
$$0\leq\mathcal{P}_W\mathcal{L}\mathcal{P}_W-\mathcal{L}_{[W]}\leq \mathcal{N}_{W}.$$

\end{lem}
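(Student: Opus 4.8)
**Proof plan for Lemma 2.8 (the inequality $0 \le \mathcal{P}_W\mathcal{L}\mathcal{P}_W - \mathcal{L}_{[W]} \le \mathcal{N}_W$).**

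The plan is to work with the series expansions of the operators and compare them term by term. Write $\mathcal{Q} = \mathcal{P}_{W^C} = \mathcal{I} - \mathcal{P}_W$, so that $\mathcal{I} = \mathcal{P}_W + \mathcal{Q}$. Since $\|\mathcal{K}\| < 1$, we have the Neumann-type expansion $\mathcal{L} = \mathcal{K}(\mathcal{I}-\mathcal{K})^{-1} = \sum_{k\ge 1}\mathcal{K}^k$, and similarly $\mathcal{L}_{[W]} = \sum_{k\ge 1}\mathcal{K}_W^k$ with $\mathcal{K}_W = \mathcal{P}_W\mathcal{K}\mathcal{P}_W$. Conjugating the first series by $\mathcal{P}_W$,
\begin{equation}
\mathcal{P}_W\mathcal{L}\mathcal{P}_W = \sum_{k\ge 1}\mathcal{P}_W\mathcal{K}^k\mathcal{P}_W,
\end{equation}
and inserting $\mathcal{I} = \mathcal{P}_W + \mathcal{Q}$ between each pair of consecutive factors $\mathcal{K}$ in $\mathcal{K}^k$ expands $\mathcal{P}_W\mathcal{K}^k\mathcal{P}_W$ into a sum over words in $\{\mathcal{P}_W,\mathcal{Q}\}$ of length $k-1$ flanked by $\mathcal{P}_W$ on both ends. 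The word consisting entirely of $\mathcal{P}_W$'s reconstructs $\mathcal{K}_W^k$ exactly; hence
\begin{equation}
\mathcal{P}_W\mathcal{L}\mathcal{P}_W - \mathcal{L}_{[W]} = \sum_{k\ge 2}\ \sum_{\substack{w \in \{\mathcal{P}_W,\mathcal{Q}\}^{k-1}\\ w \neq \mathcal{P}_W^{k-1}}} \mathcal{P}_W\mathcal{K}w_1\mathcal{K}w_2\cdots w_{k-1}\mathcal{K}\mathcal{P}_W .
\end{equation}

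For the lower bound $0 \le \mathcal{P}_W\mathcal{L}\mathcal{P}_W - \mathcal{L}_{[W]}$, the cleanest route is not the term-by-term expansion but a monotonicity argument: since $0 \le \mathcal{K}_W \le \mathcal{P}_W\mathcal{K}\mathcal{P}_W$ is an equality and, more usefully, the map $t \mapsto t/(1-t)$ is operator-monotone on $[0,1)$, while $\mathcal{K}_W = \mathcal{P}_W\mathcal{K}\mathcal{P}_W \le \mathcal{P}_W\mathcal{K}\mathcal{P}_W$ trivially — instead I would use that for any contraction, $\mathcal{P}_W(\mathcal{I}-\mathcal{K})^{-1}\mathcal{P}_W \ge (\mathcal{I}_W - \mathcal{P}_W\mathcal{K}\mathcal{P}_W)^{-1}$ in the Loewner order (a standard fact: compressing a positive operator's inverse dominates the inverse of the compression, since $(\mathcal{I}-\mathcal{K})^{-1} \ge \mathcal{P}_W\big(\mathcal{P}_W(\mathcal{I}-\mathcal{K})\mathcal{P}_W\big)^{-1}\mathcal{P}_W$ by the Schur complement / Cauchy–Schwarz in the form $\langle (\mathcal{I}-\mathcal{K})^{-1}\xi,\xi\rangle \ge \langle \xi,\eta\rangle^2/\langle(\mathcal{I}-\mathcal{K})\eta,\eta\rangle$). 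Subtracting $\mathcal{P}_W$ from both sides and using $\mathcal{L} = (\mathcal{I}-\mathcal{K})^{-1} - \mathcal{I}$, $\mathcal{L}_{[W]} = (\mathcal{I}_W - \mathcal{K}_W)^{-1} - \mathcal{I}_W$ gives exactly $\mathcal{P}_W\mathcal{L}\mathcal{P}_W \ge \mathcal{L}_{[W]}$.

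For the upper bound, I would return to the resolvent identity rather than the infinite word-sum. Set $A = \mathcal{I} - \mathcal{K}$, a positive invertible operator with block form relative to $L^2(W)\oplus L^2(W^C)$; by the Schur complement formula the $(W,W)$-block of $A^{-1}$ is $\big(A_{WW} - A_{W W^C} A_{W^C W^C}^{-1} A_{W^C W}\big)^{-1}$, where $A_{WW} = \mathcal{I}_W - \mathcal{K}_W$, $A_{WW^C} = -\mathcal{P}_W\mathcal{K}\mathcal{Q}$, etc. Thus $\mathcal{P}_W\mathcal{L}\mathcal{P}_W - \mathcal{L}_{[W]} = \big(A_{WW} - S\big)^{-1} - A_{WW}^{-1}$ with $S = A_{WW^C}A_{W^C W^C}^{-1}A_{W^C W} \ge 0$, which by the resolvent identity equals $A_{WW}^{-1}S(A_{WW}-S)^{-1} = \mathcal{L}_{[W]}' \, S \, \mathcal{P}_W\mathcal{L}\mathcal{P}_W$-type product; expanding once more and identifying $A_{W^C W^C}^{-1} = \mathcal{P}_{W^C} + \mathcal{L}\!\restriction$-pieces, one checks the difference is bounded above by $\mathcal{P}_W\mathcal{L}\mathcal{P}_{W^C}\mathcal{L}\mathcal{P}_W = \mathcal{N}_W$, using at each step that dropping the remaining (positive) higher-order terms and that replacing $(\mathcal{I}_W-\mathcal{K}_W)^{-1}$ factors by $\mathcal{I}_W$ only decreases things because $\mathcal{K}_W \ge 0$. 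The main obstacle is precisely this last bookkeeping: showing that after the Schur-complement manipulation the cross term collapses to exactly $\mathcal{P}_W\mathcal{L}\mathcal{P}_{W^C}\mathcal{L}\mathcal{P}_W$ and that all the discarded terms are genuinely positive (so that an inequality, not an identity, is all we get). I expect this to hinge on the identities $A_{W^C W^C}^{-1} \ge \mathcal{P}_{W^C}$ and $A^{-1} = \mathcal{I} + \mathcal{L}$ together with the positivity of $\mathcal{L}$, $\mathcal{K}$, and all their compressions, so that every ``error'' generated by replacing a resolvent with the identity is of the correct sign.
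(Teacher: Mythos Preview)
Your lower-bound argument is fine and is essentially the same Schur-complement idea as the paper's. The gap is in the upper bound.

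You take the Schur complement of $A=\mathcal I-\mathcal K$ and arrive at
\[
\mathcal P_W\mathcal L\mathcal P_W-\mathcal L_{[W]}=(A_{WW}-S)^{-1}-A_{WW}^{-1},\qquad S=A_{WW^C}A_{W^CW^C}^{-1}A_{W^CW},
\]
with $S$ expressed through $\mathcal K$. From here you need to bound above by $\mathcal N_W=\mathcal P_W\mathcal L\mathcal P_{W^C}\mathcal L\mathcal P_W$, an expression in $\mathcal L$, and your sketch for this step does not work: the manoeuvres you describe (``dropping positive higher-order terms'', ``replacing $(\mathcal I_W-\mathcal K_W)^{-1}$ by $\mathcal I_W$'', using $A_{W^CW^C}^{-1}\geq\mathcal P_{W^C}$) all go the wrong way. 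Each of them makes the quantity \emph{smaller}, not larger, so they can only ever produce lower bounds on the difference, not the desired upper bound. Concretely, $S$ is increasing in $A_{W^CW^C}^{-1}$ and $(A_{WW}-S)^{-1}-A_{WW}^{-1}$ is increasing in $S$, so replacing $A_{W^CW^C}^{-1}$ by the smaller $\mathcal P_{W^C}$ decreases the difference. The ``bookkeeping'' you flag as the main obstacle is in fact the whole content of the upper bound, and your outlined route does not reach it.

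The paper resolves this with one clean twist: apply the Schur-complement identity to the \emph{inverse} block matrix $(\mathcal I-\mathcal K)^{-1}=\mathcal I+\mathcal L$ rather than to $\mathcal I-\mathcal K$. Using that the $(1,1)$ block of $\mathcal I-\mathcal K$, namely $(\mathcal I_W+\mathcal L_{[W]})^{-1}$, is the inverse of the Schur complement of $\mathcal I+\mathcal L$ relative to its $(2,2)$ block, one gets the exact identity
\[
\mathcal P_W\mathcal L\mathcal P_W-\mathcal L_{[W]}
=\mathcal P_W\mathcal L\mathcal P_{W^C}\bigl(\mathcal I_{W^C}+\mathcal P_{W^C}\mathcal L\mathcal P_{W^C}\bigr)^{-1}\mathcal P_{W^C}\mathcal L\mathcal P_W.
\]
Both inequalities are then immediate: the right-hand side is of the form $B^{*}CB$ with $C\geq 0$ (lower bound), and since $\mathcal P_{W^C}\mathcal L\mathcal P_{W^C}\geq 0$ gives $(\mathcal I_{W^C}+\mathcal P_{W^C}\mathcal L\mathcal P_{W^C})^{-1}\leq\mathcal I_{W^C}$, one has $B^{*}CB\leq B^{*}B=\mathcal N_W$ (upper bound). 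So the fix is simply to take the Schur complement on the $\mathcal L$-side instead of the $\mathcal K$-side; then no ``bookkeeping'' is needed at all.
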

\begin{proof}
We consider the following decomposition of the linear operators $\mathcal{I}-\mathcal{K}$ and $(\mathcal{I}-\mathcal{K})^{-1}$ on $L^2(W)\oplus L^2(W^C)$:
$$\mathcal{I}-\mathcal{K}=\left ( \begin{tabular}{ll}
$\mathcal{I}_W-\mathcal{K}_{W}$ & $-\mathcal{P}_W\mathcal{K}\mathcal{P}_{W^C}$ \\
$-\mathcal{P}_{W^C}\mathcal{K}\mathcal{P}_W$ & $\mathcal{I}_{W^C}-\mathcal{K}_{W^C}$ \\
\end{tabular} \right )=\left ( \begin{tabular}{ll}
$(\mathcal{L}_{[W]}+\mathcal{I}_W)^{-1}$ & $-\mathcal{P}_W\mathcal{K}\mathcal{P}_{W^C}$ \\
$-\mathcal{P}_{W^C}\mathcal{K}\mathcal{P}_W$ & $(\mathcal{L}_{[W^C]}+\mathcal{I}_{W^C})^{-1}$ \\
\end{tabular} \right )$$
and
$$(\mathcal{I}-\mathcal{K})^{-1}=\mathcal{I}+\mathcal{L}=\left ( \begin{tabular}{ll}
$\mathcal{P}_W\mathcal{L}\mathcal{P}_W+\mathcal{I}_W$ & $\mathcal{P}_W\mathcal{L}\mathcal{P}_{W^C}$ \\
$\mathcal{P}_{W^C}\mathcal{L}\mathcal{P}_W$ & $\mathcal{P}_{W^C}\mathcal{L}\mathcal{P}_{W^C}+\mathcal{I}_{W^C}$ \\
\end{tabular} \right ).$$
A well-known result is that the $(1,1)$ block of $\mathcal{I}-\mathcal{K}$ is equal to the inverse of the Schur complement of $(\mathcal{I}-\mathcal{K})^{-1}$ relative to its $(2,2)$ block. This property is proved for $2\times 2$ block matrices in  \cite[Theorem 1.2]{Schur}, and  since the proof does not use any finite dimensionality argument, it works all the same for nonsingular operators on a Hilbert space, see \cite{Nonsub} for example. 
As a consequence, we get
$$(\mathcal{L}_{[W]}+\mathcal{I}_W)^{-1}=(\mathcal{P}_W\mathcal{L}\mathcal{P}_W+\mathcal{I}_W-\mathcal{P}_W\mathcal{L}\mathcal{P}_{W^C}(\mathcal{P}_{W^C}\mathcal{L}\mathcal{P}_{W^C}+\mathcal{I}_{W^C})^{-1}\mathcal{P}_{W^C}\mathcal{L}\mathcal{P}_W)^{-1}$$
hence
$$\mathcal{P}_W\mathcal{L}\mathcal{P}_W-\mathcal{L}_{[W]} = \mathcal{P}_W\mathcal{L}\mathcal{P}_{W^C}(\mathcal{P}_{W^C}\mathcal{L}\mathcal{P}_{W^C}+\mathcal{I}_{W^C})^{-1}\mathcal{P}_{W^C}\mathcal{L}\mathcal{P}_W\geq 0.$$
Finally, since $(\mathcal{P}_{W^C}\mathcal{L}\mathcal{P}_{W^C}+\mathcal{I}_{W^C})^{-1}\leq \mathcal{I}_{W^C}$ this concludes the lemma.
\end{proof}
Now, we rewrite 
$$\frac{1}{|W_n|}\big|\logdet(L^\theta[X\cap W_n])-\logdet(L^\theta_{[W_n]}[X\cap W_n])\big|$$
as
$$\frac{1}{|W_n|}\big|\logdet\big(Id+(L^\theta[X\cap W_n]-L_{[W_n]}^\theta[X\cap W_n])L_{[W_n]}^\theta[X\cap W_n]^{-1}\big)\big|.$$
By Lemma \ref{ineqdet}, we know that
$$0\leq L^\theta[X\cap W_n]-L_{[W_n]}^\theta[X\cap W_n]\leq N_{W_n}^\theta[X\cap W_n]$$
where $N_{W_n}^\theta$ is defined as in \eqref{defN}. Therefore, using Lemma \ref{stupidlem} we obtain the bound
$$0\leq\logdet(L^\theta[X\cap W_n])-\logdet(L^\theta_{[W_n]}[X\cap W_n])\leq\tr(N^\theta_{W_n}[X\cap W_n]L^\theta_{[W_n]}[X\cap W_n]^{-1}).$$
Now, since $\mathcal{L}^\theta_{[W_n]}\geq\mathcal{K}^\theta_{W_n}$ by definition, then $\lambda_{min}(L^\theta_{[W_n]}[X\cap W_n])\geq\lambda_{min}(K^\theta[X\cap W_n])\geq \lambda_m^\theta\geq \lambda_0^\theta$ where the second to last inequality is a consequence of $K^\theta[X\cap W_n]$ being a sub-matrix of $K^\theta[\Z^d]$. Therefore,
$$\tr(N^\theta_{W_n}[X\cap W_n]L^\theta_{[W_n]}[X\cap W_n]^{-1})\leq (\lambda_0^\theta)^{-1}\tr(N^\theta_{W_n}[X\cap W_n])=(\lambda_0^\theta)^{-1}\sum_{x\in X\cap W_n}N^\theta_{W_n}(x,x).$$
The function $X\mapsto|W_n|^{-1}\sup_{\theta\in\Theta}\sum_{x\in X}N^\theta_{W_n}(x,x)$ is Lipschitz continuous on $\bigcup_{k\geq 0}W_n^k$ with constant $\sup_{\theta\in\Theta}\|N^\theta_{W_n}\|_{\infty}/|W_n|$ where
$$\sup_{\theta\in\Theta}\|N^\theta_{W_n}\|_{\infty}\leq\sup_{\theta\in\Theta}\|L^\theta_0\|_2^2=\sup_{\theta\in\Theta}\left\|\frac{\hat K^\theta_0}{1-\hat K^\theta_0}\right\|_2^2\leq\frac{\sup_{\theta\in\Theta}\|\hat K^\theta_0\|_2^2}{1-M}=\frac{\sup_{\theta\in\Theta}\|K^\theta_0\|_2^2}{1-M}.$$
This expression is finite since we assumed \eqref{assumptions}. By \cite[Theorem 3.5]{Pemantle}, we then get for all $a\in\R_+$
\begin{multline} \label{merciPemPer}
\mathbb{P}_{\theta^*}\left(\frac{1}{|W_n|}\left|\sup_{\theta\in\Theta}\sum_{x\in X\cap W_n}N^\theta_{W_n}(x,x)-\E_{\theta^*}\left[\sup_{\theta\in\Theta}\sum_{x\in X\cap W_n}N^\theta_{W_n}(x,x)\right]\right|>a\right)\\
\leq 5\exp\left(-\frac{a^2|W_n|^2/\sup_{\theta\in\Theta}\|N^\theta_{W_n}\|_{\infty}^2}{16(a |W_n|/\sup_{\theta\in\Theta}\|N^\theta_{W_n}\|_{\infty}+2\E_{\theta^*}[N(W_n)])}\right)
\end{multline}
where $\E_{\theta^*}[N(W_n)]=|W_n|K_0^{\theta^*}(0)$ and
\begin{align*}
&\frac{1}{|W_n|}\E_{\theta^*}\left[\sup_{\theta\in\Theta}\sum_{x\in X\cap W_n}N^\theta_{W_n}(x,x)\right]\\
\leq & \frac{1}{|W_n|}\E_{\theta^*}\left[\sum_{x\in X\cap W_n}\sup_{\theta\in\Theta}N^\theta_{W_n}(x,x)\right]\\
=&\frac{K_0^{\theta^*}(0)}{|W_n|}\int_{W_n}\sup_{\theta\in\Theta}\int_{W_n^C}L_0^\theta(y-x)^2\der \nu(x)\der \nu(y)\\
\leq&\frac{K_0^{\theta^*}(0)}{|W_n|}\int_{W_n}\int_{W_n^C}\sup_{\theta\in\Theta}L_0^\theta(y-x)^2\der \nu(x)\der \nu(y)\\
=&K_0^{\theta^*}(0)\left(\int_{\Z^d}\sup_{\theta\in\Theta}L_0^\theta(y)^2\der \nu(y)-\frac{1}{|W_n|}\int_{W_n^2}\sup_{\theta\in\Theta}L_0^\theta(y-x)^2\der \nu(x)\der \nu(y)\right).
\end{align*}
But, as a consequence of Lemma \ref{CVtransinv}, we have
$$\frac{1}{|W_n|}\int_{W_n^2}\sup_{\theta\in\Theta}L_0^\theta(y-x)^2\der \nu(x)\der \nu(y)\cvn\int_{\Z^d}\sup_{\theta\in\Theta}L_0^\theta(y)^2\der \nu(y)$$
hence
$$\frac{1}{|W_n|}\E_{\theta^*}\left[\sup_{\theta\in\Theta}\sum_{x\in X}N^\theta_{W_n}(x,x)\right]\cvn 0.$$
Finally, by \eqref{merciPemPer} and the inequality $\|N^\theta_{W_n}\|_{\infty}\leq\|L^\theta_0\|_2^2$, we get that for all $a\in\R_+$,
\begin{multline*}
\mathbb{P}_{\theta^*}\left(\frac{1}{|W_n|}\sup_{\theta\in\Theta}\left|\sum_{x\in X\cap W_n}N^\theta_{W_n}(x,x)\right|>a\right)\\
=O\left(\exp\left(-\frac{a^2|W_n|}{16\sup_{\theta\in\Theta}\|L^\theta_0\|_2^2(a+2K_0^{\theta^*}(0)\sup_{\theta\in\Theta}\|L^\theta_0\|_2^2)}\right)\right).
\end{multline*}
Since we assumed \eqref{notslow}, then by the Borel–Cantelli Lemma,
$$\frac{1}{|W_n|}\sup_{\theta\in\Theta}\sum_{x\in X\cap W_n}N^\theta_{W_n}(x,x)\cvps 0$$
and therefore
$$\frac{1}{|W_n|}\sup_{\theta\in\Theta}\big|\logdet(L^\theta[X\cap W_n])-\logdet(L^\theta_{[W_n]}[X\cap W_n])\big|\cvps 0.$$

\subsection{Proof of Proposition \ref{GridApprox2}}
First, we need to show that $X_\epsilon$ is a well defined DPP for small enough $\epsilon$ by showing that its kernel, the infinite matrix $\epsilon^dK[\epsilon\Z^d]$, is hermitian with eigenvalues in $[0,1]$. Everything is trivial except for showing that the eigenvalues become lower or equal to $1$ as $\epsilon$ vanishes. For every $v=(v_j)_{j\in\Z^d}$ such that $\sum_j |v_j|^2=1$, we define the function
$$\phi(t)=\sum_{j\in\Z^d}v_j e^{2i\pi<j,t>}$$
such that the integral of $|\phi|^2$ on any unit cube is equal to $1$.
Therefore, we can write
\begin{align*}
\langle v,\epsilon^dK[\epsilon\Z^d]v\rangle &=\sum_{j,k\in\Z^d}\epsilon^d v_jv_kK_0(\epsilon(k-j))\\
&=\sum_{j,k\in\Z^d}v_jv_k\int_{\R^d} \hat K_0(t/\epsilon)e^{2i\pi\langle k-j,t\rangle}\der t\\
&=\int_{\R^d} \hat K_0(t/\epsilon)|\phi(t)|^2\der t\\
&\leq\sum_{i\in\Z^d}\sup_{x\in C_i} \hat K_0(x/\epsilon)
\end{align*}
where $C_i$ is the unit cube defined as $[i_1-1/2,i_1+1/2]\times\cdots\times[i_d-1/2,i_d+1/2]$ for all $i=(i_1,\cdots,i_d)\in\Z^d$. By our assumptions on $\hat K_0$, we have $\sup_{x\in C_0} \hat K_0(x/\epsilon)\leq\|\hat K_0\|_\infty\hspace{-0.1cm}<1$ and 
\begin{align*}
\sup_{x\in C_i}\hat K_0(x/\epsilon)&\leq\sup_{\forall j,~x_j\in[i_j-1/2,i_j+1/2]}\frac{A}{1+\epsilon^{-(d+\tau)}(\sum_{j=1}^d x_j^2)^{(d+\tau)/2}}\\
&=\frac{A}{1+\epsilon^{-(d+\tau)}(\sum_{\substack{1\leq j\leq d \\ i_j\neq 0}} (|i_j|-1/2)^2)^{(d+\tau)/2}}
\end{align*}
hence, the sum of all $\sup_{x\in C_i}\hat K_0(x/\epsilon)$ for $i$ of the form $(i_1,\cdots,i_k,0,\cdots,0)$ where $i_1,\cdots,i_k\in\Z\backslash\{0\}$ and $k\in\{1,\cdots,d\}$ is bounded by
\begin{align*}
&\sum_{i_1,\cdots,i_k\in(\Z\backslash\{0\})^k}\frac{A}{1+\epsilon^{-(d+\tau)}(\sum_{j=1}^k (|i_j|-1/2)^2)^{(d+\tau)/2}}\\
\leq & ~\epsilon^{d+\tau}\hspace{-0.8cm}\sum_{i_1,\cdots,i_k\in(\Z\backslash\{0\})^k} \frac{A}{(\sum_{j=1}^k |i_j|^2)^{(d+\tau)/2}}\underset{\epsilon\rightarrow 0}{\longrightarrow} 0.
\end{align*}
By symmetry, this is also true for the sum of all $\sup_{x\in C_i}\hat K_0(x/\epsilon)$ for $i$ with any $k$ non-zero components and $d-k$ zero components when $k\in\{1,\cdots,d\}$. This shows that 
$$\sum_{\substack{i_1,\cdots,i_d\in\Z^d \\ i\neq(0,\cdots,0)}}\sup_{x\in C_i} \hat K_0(x/\epsilon)\underset{\epsilon\rightarrow 0}{\longrightarrow} 0$$
and therefore
$$0\leq \sup_{v:~\sum_j |v_j|^2=1} \langle v,\epsilon^dK[\epsilon\Z^d]v\rangle\leq 1$$
for small enough values of $\epsilon$, and in this case the DPP $X_\epsilon$ is then well defined.

Now, we prove the weak convergence of the discrete DPPs to the continuous one by showing the pointwise convergence of their Laplace functionals (see \cite[Proposition 11.1.VIII]{DV2}). We recall that the Laplace functional of a point process $Y$ is defined as
$$L_Y(f)\defeq \E_Y\left[\exp\left(-\sum_{x\in Y}f(x)\right)\right]$$
for all non-negative continuous function $f$ vanishing outside a bounded set. Let $D$ be a compact set of $\R^d$ and $f:\R^d\rightarrow\R$ be a continuous function vanishing outside $D$. We define the kernel
$$K_f\defeq (x,y)\mapsto\sqrt{1-e^{-f(x)}}K_0(y-x)\sqrt{1-e^{-f(y)}}$$
and call $\mathcal{K}_f$ its associated integral operator. Then, the Laplace transform of the continuous DPP $X$ reads (see \cite{Shirai})
$$L_X(f)=\det(I-\mathcal{K}_f)=\exp\left(-\sum_{n\geq 1}\frac{1}{n}\tr(\mathcal{K}_f^n)\right)$$
and for all $\epsilon$, the rescaled DPP $\epsilon X_\epsilon$ has the same distribution as a DPP on $\epsilon\Z^d$ with kernel $\epsilon^d K_0(y-x)$ hence its Laplace transform reads
\begin{align*}
L_{\epsilon X_\epsilon}(f)=&\det\left (I-\epsilon^dK_f[\epsilon\Z^d]\right )\\
=&\exp\left(-\sum_{n\geq 1}\frac{\epsilon^{dn}}{n}\tr\left(K_f\left[D\cap \epsilon\Z^d\right]^n\right)\right)\\
=&\exp\left(-\sum_{n\geq 1}\frac{1}{n}\left(\epsilon^{dn}\hspace{-0.5cm}\sum_{x_1,\cdots,x_n\in D\cap \epsilon\Z^d}\hspace{-0.5cm}K_f(x_1,x_2)\cdots K_f(x_{n-1},x_n)K_f(x_n,x_1)\right)\right).
\end{align*}
For all $n\geq 1$, we have the convergence of the following Riemann sum on the compact sets $D^n$:
\begin{multline*}
\epsilon^{dn}\hspace{-0.7cm}\sum_{x_1,\cdots,x_n\in D\cap \epsilon\Z^d}\hspace{-0.5cm}K_f(x_1,x_2)\cdots K_f(x_{n-1},x_n)K_f(x_n,x_1)\\
\underset{\epsilon\rightarrow 0}{\longrightarrow}\int_{D^n}K_f(x_1,x_2)\cdots K_f(x_{n-1},x_n)K_f(x_n,x_1)\der x=\tr(\mathcal{K}_f^n).
\end{multline*}
Moreover, we have 
$$\tr \left(\left(\epsilon^dK_f\left[D\cap\epsilon\Z^d\right]\right)^n\right)\leq \lambda_{\max}\left(\epsilon^dK_f\left[D\cap\epsilon\Z^d\right]\right)^{n-1}\tr\left(\epsilon^dK_f\left[D\cap\epsilon\Z^d\right]\right)$$ 
and since $\mathcal{K}_f\leq\mathcal{K}$ then $\lambda_{\max}(\epsilon^dK_f[D\cap\epsilon\Z^d])\leq\lambda_{\max}(\epsilon^dK[D\cap \epsilon\Z^d])\leq\lambda_{\max}(\epsilon^dK[\epsilon\Z^d])$ which we showed was arbitrary close to $\|K_0\|_\infty<1$ for small enough $\epsilon$, then by the dominated convergence theorem we get that
$$L_{\epsilon X_\epsilon}(f)\underset{\epsilon\rightarrow 0}{\longrightarrow}L_X(f)$$
which proves the weak convergence of the distributions of $\epsilon X_\epsilon$ towards the distribution of $X$ when $\epsilon$ goes towards $0$.

\appendix

\section{Technical Lemmas} \label{SEC:APP4}

\begin{lem}\label{stupidlem}
Let $n\in\N$ and $A,B$ be two $n\times n$ positive semi-definite matrices. Then,
$$0\leq\logdet(I+AB)\leq \tr(AB).$$
\end{lem}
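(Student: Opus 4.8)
The plan is to pass from the (possibly non-symmetric) matrix $AB$ to a genuinely positive semi-definite matrix with the same spectrum. Since $A$ is positive semi-definite it has a positive semi-definite square root $A^{1/2}$, and $A^{1/2}BA^{1/2}$ is positive semi-definite. Applying the standard determinant identity $\det(\lambda I - XY) = \det(\lambda I - YX)$ with $X = A^{1/2}$ and $Y = A^{1/2}B$ shows that $AB$ and $A^{1/2}BA^{1/2}$ have the same characteristic polynomial, hence the same eigenvalues $\mu_1, \dots, \mu_n$ counted with multiplicity; these are nonnegative reals because $A^{1/2}BA^{1/2}$ is positive semi-definite. In particular $1 + \mu_i \geq 1 > 0$ for all $i$, so $\logdet(I + AB)$ is well-defined, and one has $\det(I+AB) = \prod_{i=1}^n (1+\mu_i)$ and $\tr(AB) = \tr(A^{1/2}BA^{1/2}) = \sum_{i=1}^n \mu_i$.

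With this reduction the claimed double inequality becomes elementary. For the lower bound, each factor satisfies $1 + \mu_i \geq 1$, whence $\logdet(I+AB) = \sum_{i=1}^n \log(1+\mu_i) \geq 0$. For the upper bound, I would invoke the scalar inequality $\log(1+t) \leq t$, valid for every $t > -1$ and in particular for $t = \mu_i \geq 0$, which yields $\sum_{i=1}^n \log(1+\mu_i) \leq \sum_{i=1}^n \mu_i = \tr(AB)$.

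I do not expect a real obstacle in this argument; the only delicate point is that $AB$ is not symmetric, so one cannot speak directly of ``the eigenvalues of $AB$ being nonnegative'' without the detour through $A^{1/2}BA^{1/2}$. The square-root reduction handles the singular case of $A$ uniformly (if $A$ is invertible one may alternatively note that $AB$ is similar to $A^{-1/2}(AB)A^{1/2} = A^{1/2}BA^{1/2}$, or argue by continuity from the positive definite case). Everything else reduces to the product formula for the determinant, the sum formula for the trace, and the concavity-type bound $\log(1+t) \le t$.
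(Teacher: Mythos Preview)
Your argument is correct and essentially identical to the paper's: both pass from $AB$ to the positive semi-definite matrix $A^{1/2}BA^{1/2}$ via a Sylvester-type identity (the paper invokes $\det(I+AB)=\det(I+A^{1/2}BA^{1/2})$ directly, you use the equivalent equality of characteristic polynomials) and then apply $\log(1+t)\leq t$ to each eigenvalue. The only cosmetic difference is that the paper first isolates the special case $B=I$ before reducing, whereas you go straight to the general case.
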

\begin{proof}
We first assume that $B$ is the identity matrix. Let $\lambda_1,\cdots,\lambda_n$ be the eigenvalues (with multiplicity) of $A$. Then,
$$0\leq\logdet(I+A)=\sum_{i=1}^n \log(1+\lambda_i)\leq\sum_{i=1}^n \lambda_i=\tr(A).$$
In the general case, Sylvester's determinant identity gives us
$$0\leq\logdet(I+AB)=\logdet(I+A^{1/2}BA^{1/2})\leq \tr(A^{1/2}BA^{1/2})=\tr(AB).$$
\end{proof}

\begin{lem} \label{CVtransinv}
Let $f:\X^k\mapsto\R$ be a translation invariant function such that
$$(x_2,\cdots,x_k)\mapsto f(0,x_2,\cdots,x_k)\in L^1\big(\X^{k-1},\nu^{k-1}\big),$$
and Let $W_n$ be a sequence of increasing compact subsets of $\X$. Then, for any $r>0$, we have
\begin{multline*}
\left|\int_{\X^{k-1}}f(0,x_2,\cdots,x_k)\der\nu(x_2)\cdots\der\nu(x_k)-\frac{1}{|W_n|}\int_{W_n^k}f(x)\der\nu^k(x)\right|\\
\leq \int_{(\mathcal{B}(0,r)^C)^{k-1}}|f(0,y)|\der \nu^{k-1}(y)+\frac{|(\partial W_n\oplus r)\cap W_n|}{|W_n|}\|f(0,.)\|_{L^1},
\end{multline*}
where $\mathcal{B}(0,r)^C$ is the complement of the euclidian ball centered at the origin with radius $r$.\\
In particular, if there exists a sequence $(r_n)_{n\geq 0}$ satisfying
\begin{equation}\label{borderWn}
|(\partial W_n\oplus r_n)\cap W_n|=o(|W_n|),
\end{equation}
then
\begin{equation} \label{lastlem}
\frac{1}{|W_n|}\int_{W_n^k}f(x)\der\nu^k(x)\cvn\int_{\X^{k-1}}f(0,x_2,\cdots,x_k)\der\nu(x_2)\cdots\der\nu(x_k).
\end{equation}
\end{lem}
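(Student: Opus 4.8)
The plan is to use the translation invariance of $f$ to ``root'' the multiple integral at its first coordinate, thereby reducing everything to comparing $\int_{(W_n-x_1)^{k-1}}f(0,\cdot)\,\der\nu^{k-1}$ with $\int_{\X^{k-1}}f(0,\cdot)\,\der\nu^{k-1}$, uniformly over the translations $x_1\in W_n$. Since $f(x_1,\dots,x_k)=f(0,x_2-x_1,\dots,x_k-x_1)$, Fubini together with the measure-preserving substitution $u_j:=x_j-x_1$ (a bijection of $(\X,\nu)$, whether $\nu$ is Lebesgue measure on $\R^d$ or the counting measure on $\Z^d$) gives
$$\int_{W_n^k}f(x)\,\der\nu^k(x)=\int_{W_n}\Big(\int_{(W_n-x_1)^{k-1}}f(0,u)\,\der\nu^{k-1}(u)\Big)\der\nu(x_1).$$
Writing also $|W_n|\int_{\X^{k-1}}f(0,u)\,\der\nu^{k-1}(u)=\int_{W_n}\big(\int_{\X^{k-1}}f(0,u)\,\der\nu^{k-1}(u)\big)\der\nu(x_1)$, subtracting, and bringing the absolute value inside both integrals yields
$$\Big|\int_{\X^{k-1}}f(0,u)\,\der\nu^{k-1}(u)-\frac{1}{|W_n|}\int_{W_n^k}f(x)\,\der\nu^k(x)\Big|\leq\frac{1}{|W_n|}\int_{W_n}\Big(\int_{\X^{k-1}\setminus(W_n-x_1)^{k-1}}|f(0,u)|\,\der\nu^{k-1}(u)\Big)\der\nu(x_1).$$

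Next I would split the outer domain $W_n$ into the ``collar'' $\mathcal C_n:=(\partial W_n\oplus r)\cap W_n$ and the deep interior $W_n\setminus\mathcal C_n$ and estimate the inner integral separately on each. On $\mathcal C_n$ I simply bound $\int_{\X^{k-1}\setminus(W_n-x_1)^{k-1}}|f(0,u)|\,\der\nu^{k-1}(u)\leq\|f(0,\cdot)\|_{L^1}$, which, after dividing by $|W_n|$, produces the term $\frac{|(\partial W_n\oplus r)\cap W_n|}{|W_n|}\|f(0,\cdot)\|_{L^1}$. For $x_1\in W_n\setminus\mathcal C_n$ one has $\dist(x_1,\partial W_n)\geq r$, so the ball $\mathcal B(x_1,r)$ lies inside $W_n$, i.e. $\mathcal B(0,r)\subseteq W_n-x_1$; hence $\mathcal B(0,r)^{k-1}\subseteq(W_n-x_1)^{k-1}$, and passing to complements, every tuple $u\notin(W_n-x_1)^{k-1}$ has at least one coordinate outside $\mathcal B(0,r)$. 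Consequently the inner integral over the deep interior is at most $\int_{(\mathcal B(0,r)^C)^{k-1}}|f(0,u)|\,\der\nu^{k-1}(u)$, and since $|W_n\setminus\mathcal C_n|\leq|W_n|$ this part contributes at most the first term of the claimed bound. Adding the two pieces gives the displayed inequality of the lemma.

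For the ``in particular'' part I would apply this inequality with $r=r_n$, where $(r_n)$ is a sequence satisfying \eqref{borderWn} which we take to diverge to $+\infty$ (this is the case in all uses, being part of Condition $(\mathcal W)$). The second term then vanishes directly from the hypothesis because $\|f(0,\cdot)\|_{L^1}<\infty$. For the first term, the domains of integration decrease to the empty set as $r_n\cvn\infty$, so by dominated convergence with the integrable majorant $|f(0,\cdot)|$ it vanishes as well; hence the right-hand side tends to $0$, which is \eqref{lastlem}.

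The only genuinely delicate step is the interior estimate, specifically checking that ``$u$ escapes $(W_n-x_1)^{k-1}$'' forces a coordinate of $u$ to escape $\mathcal B(0,r)$ uniformly over the deep-interior $x_1$; this is precisely where the geometric implication $\dist(x_1,\partial W_n)\geq r\Rightarrow\mathcal B(x_1,r)\subseteq W_n$ is used, and everything else (Fubini, a translation change of variables, and dominated convergence) is routine. A small amount of care is also needed so the argument reads identically for $\X=\R^d$ and $\X=\Z^d$, which it does because the translation $u\mapsto u-x_1$ preserves $\nu$ in both settings.
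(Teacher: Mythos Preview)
Your approach is essentially identical to the paper's: exploit translation invariance to root the integral at $x_1$, split $W_n$ into the collar $(\partial W_n\oplus r)\cap W_n$ and the deep interior $W_n\setminus(\partial W_n\oplus r)$, and bound the two contributions separately by $\|f(0,\cdot)\|_{L^1}$ and the tail integral.

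There is one small logical slip, which the paper's own proof shares. From $\mathcal B(0,r)^{k-1}\subseteq(W_n-x_1)^{k-1}$ you correctly deduce that any $u\notin(W_n-x_1)^{k-1}$ has \emph{at least one} coordinate outside $\mathcal B(0,r)$; but this places $u$ in $\X^{k-1}\setminus\mathcal B(0,r)^{k-1}$, not in $(\mathcal B(0,r)^C)^{k-1}$ (the set where \emph{all} coordinates lie outside the ball). The latter set is smaller, so the bound $\int_{(\mathcal B(0,r)^C)^{k-1}}|f(0,u)|\,\der\nu^{k-1}(u)$ does not follow as written; the correct upper bound for the interior contribution is $\int_{\X^{k-1}\setminus\mathcal B(0,r)^{k-1}}|f(0,u)|\,\der\nu^{k-1}(u)$. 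This is really a defect in the stated inequality rather than in your strategy, and it is immaterial for the ``in particular'' conclusion \eqref{lastlem} (either domain decreases to $\emptyset$ as $r\to\infty$, so dominated convergence applies just the same), which is the only part invoked elsewhere in the paper.
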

\begin{proof}
We write $W_n\omoins r$ for the set $W_n\backslash (\partial W_n\oplus r)$ of points in $W_n$ at distance at least $r$ from the boundary of $W_n$. Since $f$ is translation invariant then the right term in \eqref{lastlem} is equal to
$$\frac{1}{|W_n|}\int_{W_n\times\X^{k-1}}f(x)\der\nu^k(x).$$
As a consequence,
\begin{align*}
&\left|\int_{\X^{k-1}}f(0,x_2,\cdots,x_k)\der\nu(x_2)\cdots\der\nu(x_k)-\frac{1}{|W_n|}\int_{W_n^k}f(x)\der\nu^k(x)\right|\\
&=\frac{1}{|W_n|}\left|\int_{W_n\times (\X^{k-1}\backslash W_n^{k-1})} f(x)\der\nu^k(x)\right|\\
&=\frac{1}{|W_n|}\left|\int_{W_n\omoins r}\left(\int_{\X^{k-1}\backslash W_n^{k-1}}f(x)\der\nu(x_2)\cdots\der\nu(x_k)\right)\der\nu(x_1)\right.\\
&\hspace{4cm}\left.+\frac{1}{|W_n|}\left(\int_{(\partial W_n\oplus r)\cap W_n}\int_{\X^{k-1}\backslash W_n^{k-1}}f(x)\der\nu(x_2)\cdots\der\nu(x_k)\right)\der\nu(x_1)\right|\\
&\leq \frac{1}{|W_n|}\int_{W_n\omoins r}\left (\int_{\X^{k-1}}|f(0,y)|\cara{\{\forall i,~\|y_i\|>r\}}\der\nu^{k-1}(y)\right )\der\nu(x)\\
&\hspace{5cm}+\frac{1}{|W_n|}\int_{(\partial W_n\oplus r)\cap W_n}\left (\int_{\X^{k-1}}|f(0,y)|\der \nu^{k-1}(y)\right )\der \nu(x)\\
&\leq \int_{(\mathcal{B}(0,r)^C)^{k-1}}|f(0,y)|\der \nu^{k-1}(y)+\frac{|(\partial W_n\oplus r)\cap W_n|}{|W_n|}\|f(0,.)\|_{L^1}.\qedhere
\end{align*}
\end{proof}

\begin{prop} \label{rhocool}
Let $X$ be a DPP with Bessel-type kernel $K^{\rho,\alpha}_0$, as defined in Table~\ref{parametric models}, observed on a window $W\subset\R^d$. Recall that $\rho_{\max}$, given  in Table~\ref{parametric models},  is the upper bound of $\rho$ for which $X$ is well-defined.
Then, for all $\alpha>0$ such that $N(W)/|W|\leq \rho_{\max}$,
\begin{equation} \label{argmax}
\argmax_{0\leq \rho\leq \rho_{\max}} \tilde l(\rho,\alpha|X)=\left\{\frac{N(W)}{|W|}\right\}.
\end{equation}
\end{prop}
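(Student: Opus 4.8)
The plan is to compute $\tilde l(\rho,\alpha\,|\,X)$ explicitly as a scalar function of $\rho$ (with $\alpha$ and the configuration $X\cap W$ fixed) and then maximise it by one‑variable calculus. Write $n\defeq N(W)$. From Table~\ref{parametric models} the Bessel‑type kernel has $\hat K_0^{\rho,\alpha}=(\rho/\rho_{\max})\,\cara{\|x\|\le R_\alpha}$ with $R_\alpha\defeq\sqrt{d/(2\pi^2\alpha^2)}$, so the deterministic part of \eqref{approxLL} collapses to $\int_{\R^d}\log(1-\hat K_0^{\rho,\alpha}(x))\der x=|\mathcal B(0,R_\alpha)|\,\log(1-\rho/\rho_{\max})$, all other values of $\hat K_0^{\rho,\alpha}$ being $0$. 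The first thing to record is the normalisation identity $|\mathcal B(0,R_\alpha)|=\rho_{\max}$: inserting $R_\alpha^d=d^{d/2}/(2^{d/2}\pi^d\alpha^d)$ into $|\mathcal B(0,R)|=\pi^{d/2}R^d/\Gamma(d/2+1)$ reproduces exactly the value of $\rho_{\max}$ given in Table~\ref{parametric models}.

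Next I would handle the stochastic term. Using $(2\pi)^{d/2}\alpha^d\Gamma(d/2+1)/d^{d/2}=1/\rho_{\max}$, the expression of $L_0$ in Table~\ref{L0 models} factorises as $L_0^{\rho,\alpha}(x)=\dfrac{\rho\rho_{\max}}{\rho_{\max}-\rho}\,g_\alpha(x)$, where $g_\alpha\defeq K_0^{1,\alpha}$ is the Bessel kernel with unit intensity and is independent of $\rho$. Hence $L_0^{\rho,\alpha}[X\cap W]=\frac{\rho\rho_{\max}}{\rho_{\max}-\rho}\,g_\alpha[X\cap W]$, and taking the log‑determinant of these $n\times n$ matrices,
$$\logdet\big(L_0^{\rho,\alpha}[X\cap W]\big)=n\big(\log\rho-\log(1-\rho/\rho_{\max})\big)+\logdet\big(g_\alpha[X\cap W]\big),$$
i.e. a $\rho$‑affine combination of $\log\rho$ and $\log(1-\rho/\rho_{\max})$ plus a $\rho$‑free term. (If $g_\alpha[X\cap W]$ is singular this last term is $-\infty$ for every $\rho$ and the statement is vacuous; otherwise it is finite and we proceed.)

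Combining the two parts, absorbing everything independent of $\rho$ into a constant $c(X,\alpha)$, I would obtain, for $\rho\in(0,\rho_{\max})$,
$$\tilde l(\rho,\alpha\,|\,X)=c(X,\alpha)+\Big(\rho_{\max}-\tfrac{n}{|W|}\Big)\log\big(1-\tfrac{\rho}{\rho_{\max}}\big)+\tfrac{n}{|W|}\log\rho,$$
so it remains to maximise $\psi(\rho)\defeq(\rho_{\max}-n/|W|)\log(1-\rho/\rho_{\max})+(n/|W|)\log\rho$. A direct computation shows $\psi''(\rho)=-(\rho_{\max}-n/|W|)(\rho_{\max}-\rho)^{-2}-(n/|W|)\rho^{-2}<0$ on $(0,\rho_{\max})$ (using $n/|W|\le\rho_{\max}$ and $\rho_{\max}>0$), so $\psi$ is strictly concave, and solving
$$\psi'(\rho)=-\frac{\rho_{\max}-n/|W|}{\rho_{\max}-\rho}+\frac{n/|W|}{\rho}=0$$
gives the unique critical point $\rho=n/|W|=N(W)/|W|$; since $\psi\to-\infty$ at each endpoint of $(0,\rho_{\max})$ that is not $N(W)/|W|$, this critical point is the unique maximiser on $[0,\rho_{\max}]$, proving \eqref{argmax}. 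The degenerate sub‑cases are consistent and can be dispatched separately: if $N(W)=0$ then $\psi=\rho_{\max}\log(1-\rho/\rho_{\max})$ is decreasing with maximum at $\rho=0$, and if $N(W)/|W|=\rho_{\max}$ the coefficient of $\log(1-\rho/\rho_{\max})$ vanishes, $\psi=\rho_{\max}\log\rho$ is increasing, and the maximum is approached at $\rho=\rho_{\max}$. The argument is essentially a computation and there is no deep obstacle; the only points requiring genuine care are the two normalisation identities ($|\mathcal B(0,R_\alpha)|=\rho_{\max}$ and the factorisation of $L_0^{\rho,\alpha}$) and the bookkeeping at the boundary of $[0,\rho_{\max}]$, where one must combine the two $\log(1-\rho/\rho_{\max})$ contributions \emph{before} letting $\rho\to\rho_{\max}$ so as not to create a spurious $\infty-\infty$.
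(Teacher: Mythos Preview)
Your proof is correct and follows essentially the same approach as the paper: both compute the deterministic integral via the identity $|\mathcal B(0,R_\alpha)|=\rho_{\max}$, factor $L_0^{\rho,\alpha}$ as $\rho/(1-\rho/\rho_{\max})$ times a $\rho$-free kernel, and then optimise the resulting one-variable function by calculus. Your version is slightly more thorough in that you establish global strict concavity and treat the boundary cases $N(W)=0$ and $N(W)/|W|=\rho_{\max}$ explicitly, whereas the paper checks only that the second derivative is negative at the critical point.
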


\begin{proof}
By noticing that $\rho_{\max}$ is the volume of the $d$-dimensional ball with radius $\sqrt{d/(2\pi^2\alpha^2)}$, we get from the expression of $\hat K^{\rho,\alpha}_0$  in Table~\ref{parametric models} that
$$\int_{\R^d}\log(1-\hat K^{\rho,\alpha}_0(x))\der x=\rho_{\max}\log(1-\rho/\rho_{\max}).$$
Moreover, $L^{\rho,\alpha}_0$ can be written as $\rho F^\alpha / (1-\rho/\rho_{\max})$, where $F^\alpha$ is a function not depending on $\rho$ (see Table~\ref{L0 models}). Therefore, $\logdet(L^{\rho,\alpha}_0[X\cap W])$ can be expressed as the sum of
$$N(W)\log\left(\frac{\rho}{1-\rho/\rho_{\max}}\right)$$
and an expression not depending on $\rho$. As a consequence, $\tilde l(\rho,\alpha|X)$ is twice differentiable with respect to $\rho$ with derivative
$$\frac{-1}{1-\rho/\rho_{\max}}+\frac{N(W)}{|W|\rho(1-\rho/\rho_{\max})}.$$
It is easy to see that this expression vanishes only when $\rho=N(W)/|W|$ with the second derivative being negative at this point, concluding the proof.
\end{proof}

\bibliographystyle{plain}
\bibliography{BigRef}

\begin{thebibliography}{10}

\bibitem{affandi2014learning}
R.H. Affandi, E.~Fox, R.~Adams, and B.~Taskar.
\newblock Learning the parameters of determinantal point process kernels.
\newblock In {\em International Conference on Machine Learning}, pages
  1224--1232, 2014.

\bibitem{min}
F.~Bachoc and R.~Furrer.
\newblock On the smallest eigenvalues of covariance matrices of multivariate
  spatial processes.
\newblock {\em Stat}, 5:102--107, 2016.

\bibitem{baddeley:rubak:turner:15}
A.~J. Baddeley, E.~Rubak, and R.~Turner.
\newblock {\em Spatial Point Patterns: Methodology and Applications with {R}}.
\newblock Interdisciplinary Statistics. Chapman \& Hall/CRC, Boca Raton,
  Florida, 2015.

\bibitem{STFT}
R.~Bardenet, J.~Flamant, and P.~Chainais.
\newblock On the zeros of the spectrogram of white noise.
\newblock {\em Appl. Comput. Harmon. Anal.}, 48(2):682--705, 2020.

\bibitem{BaHa16Sub}
R.~Bardenet and A.~Hardy.
\newblock Monte carlo with determinantal point processes.
\newblock {\em Ann. Appl. Probab.}, 30(1):368--417, 02 2020.

\bibitem{Bardenet}
R.~Bardenet and M.~Titsias RC~AUEB.
\newblock Inference for determinantal point processes without spectral
  knowledge.
\newblock In {\em Advances in Neural Information Processing Systems 28}, pages
  3393--3401. Curran Associates, Inc., 2015.

\bibitem{Biscio}
C.A.N. Biscio and F.~Lavancier.
\newblock Quantifying repulsiveness of determinantal point processes.
\newblock {\em Bernoulli}, 22:2001--2028, 2016.

\bibitem{biscio:lavancier:17}
C.A.N. Biscio and F.~Lavancier.
\newblock Contrast estimation for parametric stationary determinantal point
  processes.
\newblock {\em Scandinavian Journal of Statistics}, 44:204--229, 2017.

\bibitem{Brunel}
V.-E. Brunel, A.~Moitra, P.~Rigollet, and J.~Urschel.
\newblock Maximum likelihood estimation of determinantal point processes.
\newblock {\em arXiv:1701.06501}, 2017.
\newblock preprint.

\bibitem{DV2}
D.J. Daley and D.~Vere-Jones.
\newblock {\em An introduction to the theory of point processes, Volume II:
  General Theory and Structure}.
\newblock Probability and Its Applications. Springer, 2nd ed edition, 2007.

\bibitem{Telecom1}
N.~Deng, W.~Zhou, and M.~Haenggi.
\newblock The ginibre point process as a model for wireless networks with
  repulsion.
\newblock {\em IEEE Transactions on Wireless Communications}, 1:479--492, 2015.

\bibitem{Diggle}
P.~Diggle.
\newblock {\em The Statistical Analysis of Spatial Point Patterns (2nd ed.)}.
\newblock Hodder Arnold, London, 2003.

\bibitem{Efron_fisher}
Bradley Efron and David~V. Hinkley.
\newblock Assessing the accuracy of the maximum likelihood estimator: Observed
  versus expected fisher information.
\newblock {\em Biometrika}, 65(3):457--482, 1978.

\bibitem{Nonsub}
T.~Fujimoto, H.~Hisamatsu, and R.~Ranade.
\newblock Schur complements in banach spaces.
\newblock {\em Kagawa University economic review}, 77(2), Sep 2004.

\bibitem{gomez_case_2015}
J.~S. Gomez, A.~Vasseur, A.~Vergne, P.~Martins, L.~Decreusefond, and W.~Chen.
\newblock A case study on regularity in cellular network deployment.
\newblock {\em IEEE Wireless Communications Letters}, 4(4):421--424, 2015.

\bibitem{Hough}
J.B. Hough, M.~Krishnapur, Y.~Peres, and B.~Virag.
\newblock {\em Zeros of Gaussian Analytic Functions and Determinantal Point
  Processes}.
\newblock American Mathematical Society, 2009.

\bibitem{Taskar}
A.~Kulesza and B.~Taskar.
\newblock Determinantal point processes for machine learning.
\newblock {\em Foundations and Trends in Machine Learning}, 5(2-3):123--286,
  2012.

\bibitem{launay2020}
C.~Launay, A.~Desolneux, and B.~Galerne.
\newblock Determinantal point processes for image processing.
\newblock {\em SIAM Journal on Imaging Sciences}, 14(1):304--348, 2021.

\bibitem{Lavancier}
F.~Lavancier, J.~M{\o}ller, and E.~Rubak.
\newblock Determinantal point process models and statistical inference.
\newblock {\em Journal of Royal Statistical Society: Series B (Statistical
  Methodology)}, 77:853--877, 2015.

\bibitem{lavancier2019adaptive}
F.~Lavancier, A.~Poinas, and R.~Waagepetersen.
\newblock Adaptive estimating function inference for nonstationary
  determinantal point processes.
\newblock {\em Scandinavian Journal of Statistics}, 48(1):87--107, 2021.

\bibitem{Macchi}
O.~Macchi.
\newblock The coincidence approach to stochastic point processes.
\newblock {\em Advances in Applied Probability}, 7:83--122, 1975.

\bibitem{Telecom2}
N.~Miyoshi and T.~Shirai.
\newblock A cellular network model with ginibre configured base stations.
\newblock {\em Advances in Applied Probability}, 46:832--845, 2014.

\bibitem{Pemantle}
R.~Pemantle and Y.~Peres.
\newblock Concentration of lipschitz functionals of determinantal and other
  strong rayleigh measures.
\newblock {\em Combin. Probab. Comput.}, 23:140--160, 2014.

\bibitem{Schur}
S.~Puntanen and F.~Zhang.
\newblock {\em The Schur Complement and Its Applications}.
\newblock Numerical Methods and Algorithms 4. Springer US, 2005.

\bibitem{R}
{R Core Team}.
\newblock {\em R: A Language and Environment for Statistical Computing}.
\newblock R Foundation for Statistical Computing, Vienna, Austria, 2017.

\bibitem{Shirai}
T.~Shirai and Y.~Takahashi.
\newblock Random point fields associated with certain fredholm determinants i:
  fermion, poisson and boson point processes.
\newblock {\em Journal of Functional Analysis}, 205:414--463, 2003.

\bibitem{Sosh}
A.~Soshnikov.
\newblock Determinantal random point fields.
\newblock {\em Russian Math. Surveys}, 55:923--975, 2000.

\end{thebibliography}

\end{document}